\title{Radon transform for sheaves}
\author{Honghao Gao}
\address{Institut Fourier \\ Universit{\'e} Grenoble Alpes}
\email{honghao.gao@univ-grenoble-alpes.fr}
\numberwithin{equation}{section}
\numberwithin{figure}{section}
\theoremstyle{plain}
\newtheorem{thm}{Theorem}[section]
\newtheorem{lem}[thm]{Lemma}
\newtheorem{prop}[thm]{Proposition}
\theoremstyle{definition}
\newtheorem{defn}[thm]{Definition}
\newtheorem{eg}[thm]{Example}
\theoremstyle{remark}
\newtheorem{rmk}[thm]{Remark}
\newcommand{\bbP}{{\mathbb{P}}}
\newcommand{\bbR}{{\mathbb{R}}}
\newcommand{\bbZ}{{\mathbb{Z}}}
\newcommand{\cA}{{\mathcal{A}}}
\newcommand{\cF}{{\mathcal{F}}}
\newcommand{\cG}{{\mathcal{G}}}
\newcommand{\cH}{{\mathcal{H}}}
\newcommand{\cP}{{\mathcal{P}}}
\newcommand{\del}{{\partial}}
\newcommand{\la}{{\langle}}
\newcommand{\ra}{{\rangle}}
\tikzset{node distance=1.5cm, auto}
\begin{document}
\begin{abstract}
We define the Radon transform functor for sheaves and prove that it is an equivalence after suitable microlocal localizations. As a result, the sheaf category associated to a Legendrian is invariant under the Radon transform. We also manage to place the Radon transform and other transforms in microlocal sheaf theory altogether in a diagram.
\end{abstract}
\maketitle

\section{Introduction}

The goal of the paper is to define the Radon transform for microlocal sheaf categories and study its properties.
The term ``microlocal'' refers to the consideration of the cotangent bundle when we study sheaves over a smooth manifold. This method was introduced in \cite{KS1} and has been systematically developed ever since \cite{KS}. The geometric nature of cotangent bundles makes microlocal sheaf theory a handy tool for problems in symplectic and contact geometry, such as those related to Lagrangian and Legendrian invariants \cite{GKS, Ta}, the Fukaya category \cite{Na, NZ}, Legendrian knots \cite{STZ, NRSSZ}, and more.

We define a sheaf version of the classical Radon transform (Definition \ref{RTfSC}). The classical Radon transform is an integral functional which takes a rapidly decreasing function on Euclidean space to a function on the space of hyperplanes in the Euclidean space. These analytic concepts have sheaf counterparts, where the sectional integration is captured by taking compactly supported sections. The Radon transform functor is expected to be an equivalence of categories, which corresponds to the reconstruction property of the classical Radon functional. In other words, the Radon transform should admit an inverse. In the first main theorem, we prove such an equivalence holds after some microlocal localizations.

\begin{thm}[Theorem \ref{RTn}]
The Radon transform induces an equivalence of categories:
$$\Phi: D^b(\bbR^n,\dot{T}^*\bbR^n) \rightarrow D^b(S^{n-1}\times \bbR, T^{*,+}(S^{n-1}\times \bbR)).$$
\end{thm}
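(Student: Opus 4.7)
My plan is to construct a candidate inverse functor $\Psi$ using the ``dual'' Radon correspondence and then verify $\Psi\circ\Phi\cong\mathrm{id}$ and $\Phi\circ\Psi\cong\mathrm{id}$ in the microlocally localized categories. The Radon transform $\Phi$ is realized as an integral kernel transform attached to the incidence subvariety $Z\subset\bbR^n\times S^{n-1}\times\bbR$ (the hyperplane incidence, or one of its half-spaces, depending on the sign convention adopted in Definition~\ref{RTfSC}), so that $\Phi(\cF)=R\pi_{2!}(\cK\otimes\pi_1^{-1}\cF)$ for an explicit kernel $\cK$. For $\Psi$ I would use the analogous kernel for the dual correspondence, incorporating the antipodal involution $(v,t)\mapsto(-v,-t)$ as required by the coorientation.

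The heart of the argument is the kernel convolution. The composite $\Psi\circ\Phi$ is represented by a kernel on $\bbR^n\times\bbR^n$ obtained as a pushforward from $\bbR^n\times(S^{n-1}\times\bbR)\times\bbR^n$; its set-theoretic support is concentrated on the diagonal, because a point $x$ lies on every cooriented hyperplane passing through $y$ if and only if $x=y$. One then shows that this composite kernel is isomorphic to $k_\Delta$ modulo a direct summand whose micro-support lies in the zero section of $T^*(\bbR^n\times\bbR^n)$. This is the sheaf-theoretic avatar of the classical Radon inversion formula, and the ``error term'' it produces is precisely a locally constant piece which vanishes in $D^b(\bbR^n,\dot T^*\bbR^n)$.

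The symmetric computation for $\Phi\circ\Psi$ recovers the identity on $S^{n-1}\times\bbR$ only after restricting to sheaves with micro-support in the positive cotangent cone: absent the positivity constraint $T^{*,+}$, the antipodal involution $(v,t)\leftrightarrow(-v,-t)$ would obstruct invertibility, since two oppositely cooriented representatives yield the same underlying geometric hyperplane. Thus the target localization is forced by the geometry of coorientations, while the source localization absorbs the constant error inherent to Radon inversion.

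The main technical obstacle is the convolution step: one must carefully track cohomological shifts, the coorientation/orientation sheaves, and above all pin down the ``inversion error'' microlocally as a sheaf supported only on the zero section. The principal tools will be the Kashiwara--Schapira estimates for the micro-support of proper direct images and of tensor products, together with a direct computation of the conormal to the Radon incidence, which realizes the contact diffeomorphism $\dot T^*\bbR^n\simeq T^{*,+}(S^{n-1}\times\bbR)$ underlying the equivalence. Once this local microlocal inversion identity is established, standard adjunction arguments within the six-functor formalism promote the kernel isomorphisms to an equivalence of the two triangulated categories.
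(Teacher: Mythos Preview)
Your proposal takes a genuinely different route from the paper. The paper does not attempt to compute the composite kernels $\Psi\circ\Phi$ and $\Phi\circ\Psi$ directly. Instead, it verifies that the kernel $K=k_{\{\vec x\cdot\hat n\leq r\}}$ satisfies the three axioms of a \emph{quantized contact transform} (Definition~\ref{QCT}): cohomological constructibility, the condition $\dot{SS}(K)=L$ with $L$ the graph of a contact transform $\chi:\dot T^*\bbR^n\to T^{*,+}(S^{n-1}\times\bbR)$, and simpleness of $K$ along $L$ (via Lemma~\ref{simplelemma}). The equivalence is then a black-box consequence of Theorem~\ref{ContacttransformKS} (\cite[Theorem~7.2.1]{KS}). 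The substance of the paper's proof is the explicit computation of $L=\dot{SS}(K)$ and of the contactomorphism $\chi$ in~(\ref{Chiformula}); the categorical equivalence itself is outsourced to Kashiwara--Schapira.

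Your direct kernel-composition approach is viable in principle, but there is a concrete error in the sketch. The set-theoretic support of the composite kernel on $\bbR^n\times\bbR^n$ is \emph{not} concentrated on the diagonal. Since the Radon kernel is supported on the half-space $\{\vec x\cdot\hat n\leq r\}$ (and even if one used the incidence hypersurface $\{\vec x\cdot\hat n=r\}$ instead), for \emph{every} pair $(x,x')$ there exist many $(\hat n,r)$ satisfying the relevant conditions, so the composite kernel is supported on all of $\bbR^n\times\bbR^n$. Your heuristic ``$x$ lies on every hyperplane through $y$ iff $x=y$'' has the quantifier backwards: the support condition is existential, not universal. What is actually true, and what you would need, is that the \emph{micro-support} of the composite kernel meets $\dot T^*(\bbR^n\times\bbR^n)$ only along the conormal to the diagonal, and that the kernel is simple there. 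Establishing this and then upgrading it to a categorical equivalence amounts to reproving \cite[Theorem~7.2.1]{KS} in this instance; the paper's approach bypasses that work entirely.
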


We explore two directions based on this equivalence.

For the first direction, we study the Radon transform functor and the categorical invariants for conic Lagrangians or Legendrians. Given a Legendrian in the cosphere bundle, the subcategory of sheaves microsupported along the Lagrangian cone of the Legendrian is invariant under homogeneous Hamiltonian isotopies -- a result proven by Guillermou-Kashiwara-Schapira \cite{GKS}. Hence such categories define Legendrian isotopy invariants. We are particularly interested in knots (and links) in Euclidian space, and the associated Legendrian conormal bundles. The ambient contact space of a Legendrian conormal bundle admits two natural partial compactifications, each producing a categorical invariant. These categories are related by the Radon transform.

\begin{thm}[Proposition \ref{RTapplytoknotconormal} and Proposition \ref{simpleRTknotinv}]
Let $K\subset \bbR^3$ be a knot or a link. Let $\Lambda_K\subset T^\infty \bbR^3$ be the Legendrian knot conormal and $\Lambda_K'\subset T^{\infty}(S^2\times \bbR) \cong T^\infty \bbR^3$ be the counterpart. The Radon transform induces an equivalence of categories
$$D^b_{\Lambda_K}(\bbR^3)/ Loc(pt) \cong D^b_{\Lambda_K'}(S^2\times \bbR)/ Loc(S^2).$$
Moreover, the transform preserves the simpleness.
\end{thm}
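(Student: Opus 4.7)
The strategy is to derive this from Theorem \ref{RTn} applied with $n=3$, by restricting the equivalence $\Phi$ to the subcategories of sheaves microsupported on the respective Legendrians. First, I would identify each quotient as a full subcategory of one of the localized categories of Theorem \ref{RTn}. The source $D^b(\bbR^3,\dot T^*\bbR^3)$ is $D^b(\bbR^3)$ modulo sheaves with microsupport in the zero section, i.e.\ locally constant sheaves; since $\bbR^3$ is contractible these are canonically $Loc(pt)$, so $D^b_{\Lambda_K}(\bbR^3)/Loc(pt)$ embeds fully faithfully in $D^b(\bbR^3,\dot T^*\bbR^3)$ as the subcategory whose image is microsupported on $\Lambda_K$. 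Similarly, $D^b(S^2\times\bbR, T^{*,+}(S^2\times\bbR))$ kills sheaves whose microsupport avoids the positive $\bbR$-cone; pullbacks of local systems from $S^2$ are such sheaves, so $D^b_{\Lambda_K'}(S^2\times\bbR)/Loc(S^2)$ embeds as the subcategory microsupported on $\Lambda_K'$, once one arranges that the identification $T^\infty\bbR^3\cong T^\infty(S^2\times\bbR)$ defining $\Lambda_K'$ places it inside $T^{*,+}(S^2\times\bbR)$.

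Second, I would check that $\Phi$ intertwines these two microsupport conditions. Since $\Phi$ is a correspondence functor (Definition \ref{RTfSC}), microsupport propagates through it under the standard Kashiwara--Schapira estimates, and at the level of the localizations in Theorem \ref{RTn} the functor is the quantization of a specific homogeneous contact transformation between $\dot T^*\bbR^3$ and $T^{*,+}(S^2\times\bbR)$. The counterpart $\Lambda_K'$ is defined precisely so that this contact transformation sends $\Lambda_K$ to $\Lambda_K'$. Combined with Theorem \ref{RTn} and the general fact that an equivalence of microlocalized categories restricts to an equivalence between subcategories cut out by matched microsupport conditions, this gives the desired equivalence.

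For the simpleness claim, I would use that simpleness is a purely microlocal condition at smooth points of the Legendrian: $F$ is simple along $\Lambda$ iff at each smooth point $p\in\Lambda$ the microstalk $\mu_p(F)$ is concentrated in a single degree and has rank one. Quantized contact transformations preserve microstalks up to a locally constant degree shift, so $\Phi$ carries simple sheaves along $\Lambda_K$ to simple sheaves along $\Lambda_K'$, and its quasi-inverse reverses this. I expect the main obstacle to be the microsupport bookkeeping in the second step, namely verifying cleanly that the Radon correspondence genuinely induces a contact transformation matching $\Lambda_K$ with $\Lambda_K'$, and then controlling the Maslov-type shift so that simpleness is preserved on the nose rather than only up to an overall shift.
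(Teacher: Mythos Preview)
Your treatment of the $\bbR^3$ side and of simpleness is essentially what the paper does. The identification $D^b_{\Lambda_K}(\bbR^3)/Loc(pt)\cong D^b_{\Lambda_K}(\bbR^3,\dot T^*\bbR^3)$ is exactly equation (\ref{quolocalsys}) together with Lemma \ref{Locretracts}, and simpleness is handled via $\mu hom$ and Theorem \ref{ContacttransformKS}(2), which is your microstalk argument in different language. (Your worry about a Maslov shift is unnecessary: simpleness only asks for rank one in \emph{some} degree, so a locally constant shift is harmless.)

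There is, however, a real gap on the $S^2\times\bbR$ side. You assert that $D^b_{\Lambda_K'}(S^2\times\bbR)/Loc(S^2)$ embeds in $D^b(S^2\times\bbR, T^{*,+}(S^2\times\bbR))$ because pullbacks of local systems are killed by the localization. But the localization at $T^{*,+}$ kills \emph{more} than local systems: it also inverts any morphism whose cone has singular support contained in $T^{*,-}(S^2\times\bbR)$ or in $\dot T^*S^2\times 0_\bbR$. So the natural functor
\[
D^b_{\Lambda_K'}(S^2\times\bbR)/Loc(S^2\times\bbR)\;=\;D^b_{\Lambda_K'}(S^2\times\bbR,\dot T^*(S^2\times\bbR))\;\longrightarrow\; D^b_{\Lambda_K'}(S^2\times\bbR, T^{*,+}(S^2\times\bbR))
\]
is a further quotient, and you have not shown it is an equivalence. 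This is precisely the content of Lemma \ref{opendoesnotmatter} in the paper: given a roof witnessing an isomorphism in the $T^{*,+}$-localization, one applies Tamarkin's projector $\cP$ to the apex to push its singular support into $T^{*,\geq 0}$, and then invokes a microlocal cut-off lemma to exclude singular support along $\dot T^*S^2\times 0_\bbR$. That last step genuinely uses the compactness of $\Lambda_K'/\bbR_+$ (i.e.\ that the conormal torus is compact), which is why the paper states it as a hypothesis in Theorem \ref{Equivuptoloc}. Without this argument, your ``embedding'' on the target side is only a map of localizations with no control on its faithfulness, and the asserted equivalence does not follow. The bookkeeping you flagged in the second step is by contrast routine, and is the content of Proposition \ref{RadonSh}.
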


This result holds in general for other compact Legendrians in the cosphere bundle of the Euclidean space (Theorem \ref{Equivuptoloc}).

As an application in the case of knot and link conormals, the equivalence builds towards the augmentation-sheaf correspondence. Augmentations are knot invariants defined over the Chekanov-Eliashberg differential graded algebra of the Legendrian knot conormal \cite{Ng, EENS1}. It is generally expected from the Nadler-Zaslow correspondence  \cite{Na, NZ} that augmentations correspond to simple sheaves. In particular, simple sheaves on $\bbR^3$  corresponds to augmentations \cite{Ga2}. It is also necessary to consider sheaves on $S^2\times \bbR$ within the framework of the Nadler-Zaslow correspondence. Our result here connects the two sheaf categories over different ambient spaces. More details on the story can be found in \cite{Ga}.

\smallskip

\medskip
For the second direction, we investigate the relations among some transforms in the literature of microlocal sheaf theory, together with the Radon transform in this paper. Here is a shortlist of the transforms involved:
\begin{itemize}
\item
Fourier-Sato transform. It is a fundamental transform studied by Kashiwara-Schapira \cite[Chapter 3.7]{KS}.
\item
Projective duality. It is a classical transform between the projective space and its dual. For real projective spaces, \cite{Sc, MT1, MT2, MT3} studied the constructible functions and the geometry of the subvarieties under the transform. \footnote{There is a distinction between our convention and that of \textit{loc.cit}: what we will call projective duality is called the Radon transform in \textit{loc.cit}, whereas the projective duality in \textit{loc.cit} refers to the underlining geometric transform.}
\item
Spherical duality. It can be regarded as a lift of the projective duality, up to the choice of a sign. The non-localized version was first studied in \cite{SKK}.
\item
Fourier-Tamarkin transform. It was first introduced by Tamarkin in \cite{Ta}, simply named ``Fourier transform''. This transform is applied in many other works, such as \cite{Ch, JT}.
\end{itemize}
The transforms above can be organized into a diagram where the connections among them are exhibited as restriction and polarization defined in Section \ref{Sec:Transforms}.
\begin{thm}\label{MainThm3}
We have the following diagram. The horizontal arrows are restrictions (Definition \ref{resdef}), and the vertical arrows is polarization (Definition \ref{polardef}).
\begin{figure}[h!]
\begin{center}
\begin{tikzpicture}[
	block/.style = {draw, align=center,minimum height=3em, inner sep=5},
	arrow/.style = { semithick,}]
\node[block] (FS) at (-4, 0) {{Fourier-Sato}\\{transform}};
\node[block] (Spherical) at (0, 0) {{Spherical}\\{duality}};
\node[block] (Radon) at (3.9, 0) {{Radon}\\{transform}};
\node[block] (FT) at (8.5, 0) {{Fourier-Tamarkin}\\{transform}};
\node[block] (Projective) at (0, -2.5) {{Projective}\\{duality}};
\node[] (R1) at (-1.8, 0.3) {res};
\node[] (R2) at (2, 0.3) {res};
\node[] (R3) at (5.9, 0.3) {res};
\node[] (P1) at (1.2,-1.3) {polarization};
\draw[->] (FS) to (Spherical);
\draw[->] (Spherical) to (Projective);
\draw[->] (Spherical) to (Radon);
\draw[->] (Radon) to (FT);
\end{tikzpicture}
\end{center}
	\label{relations}
\end{figure}
\end{thm}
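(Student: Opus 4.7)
The plan is to realize each of the five transforms as an integral transform (a Fourier-Mukai type convolution) with an explicit sheaf kernel supported on a geometric incidence correspondence, and then verify each arrow of the diagram by comparing kernels. The Fourier-Sato kernel on $\bbR^n\times(\bbR^n)^\vee$ is supported on the half-space $\{\la x,\xi\ra\le 0\}$; the spherical-duality kernel on $S^{n-1}\times S^{n-1}$ is the analogous $\{\la v,w\ra\le 0\}$; the Radon kernel on $\bbR^n\times(S^{n-1}\times\bbR)$ is supported on the incidence $\{t=\la x,v\ra\}$; the Fourier-Tamarkin kernel on $(\bbR^n\times\bbR_t)\times\bbR^n$ is supported on $\{t+\la x,\xi\ra\ge 0\}$; and the projective-duality kernel on $\bbP^{n-1}\times(\bbP^{n-1})^\vee$ is the flag $\{\ell\subset H\}$. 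With each kernel made explicit, each arrow of the diagram becomes a concrete comparison identity between sheaves on product spaces.

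For the three horizontal ``res'' arrows I would argue as follows. Fourier-Sato $\to$ spherical: the Fourier-Sato kernel is conic under the diagonal $\bbR_{>0}$-scaling, so it descends to $S^{n-1}\times S^{n-1}$ where it coincides with the spherical kernel; combined with the classical equivalence between conic sheaves on $\bbR^n$ and sheaves on $S^{n-1}$, this identifies the two transforms. Spherical $\to$ Radon: the half-space kernel on $S^{n-1}\times S^{n-1}$ extends (via the inclusion $S^{n-1}\hookrightarrow S^{n-1}\times\bbR$ at $t=0$, or via the dual projection) to the Radon incidence $\{t=\la x,v\ra\}$, and the res functor of Definition \ref{resdef} precisely captures this extension. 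Radon $\to$ Fourier-Tamarkin: the Fourier-Tamarkin kernel $\{t+\la x,\xi\ra\ge 0\}$ restricts along the direction map $\xi\mapsto\xi/|\xi|$ (or through the associated polar correspondence) to the Radon incidence, so res converts one kernel into the other. Each of these identifications reduces to a base-change diagram between the relevant incidence spaces and a projection-formula computation.

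For the polarization arrow spherical $\to$ projective, I would use the $\bbZ/2$-quotient $S^{n-1}\to\bbP^{n-1}$ together with its counterpart $S^{n-1}\to(\bbP^{n-1})^\vee$. Polarization in the sense of Definition \ref{polardef} is a descent functor along this quotient, together with an orientation twist. The half-space kernel $\{\la v,w\ra\le 0\}$ on $S^{n-1}\times S^{n-1}$ is \emph{not} invariant under the antipodal action $(v,w)\mapsto(-v,-w)$, so its descent to $\bbP^{n-1}\times(\bbP^{n-1})^\vee$ splits into $\bbZ/2$-isotypic components; the correct component yields the flag incidence $\{\ell\subset H\}$ (up to a shift and an orientation sheaf), matching the projective-duality kernel as in \cite{Sc,MT1,MT2,MT3}. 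I expect the bulk of the work to go into this polarization step: the horizontal arrows are essentially formal base-change arguments once the kernels are aligned, whereas relating the signed half-space kernel on the sphere to the unsigned flag incidence on the projective space requires careful bookkeeping of orientations, degree shifts and the sign character of $\bbZ/2$, and is the genuine content of the theorem.
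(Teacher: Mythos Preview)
Your overall strategy---realize each transform as a convolution and compare kernels---matches the paper's. However, several specifics are off in ways that would block the argument. The Radon kernel here is the half-space $k_{\{\vec x\cdot\hat n\le r\}}$, not the hyperplane incidence; this matters because ``restriction'' in Definition~\ref{resdef} is nothing more than pulling back the kernel along an \emph{open embedding} $j_1\times j_2: X'\times Y'\hookrightarrow X\times Y$, so half-spaces must pull back to half-spaces. With this in hand, your Spherical$\to$Radon step is dimensionally wrong: the spherical duality that restricts to the $n$-dimensional Radon transform lives on $S^n\times\check S^n$ (not $S^{n-1}\times S^{n-1}$), via the open embeddings $\bbR^n\hookrightarrow S^n$ and $S^{n-1}\times\bbR\hookrightarrow\check S^n$, and the kernel $K_-=k_{\{\la x,y\ra\le 0\}}$ pulls back to $k_{\{\vec x\cdot\hat n\le r\}}$; there is no ``extension'' to a larger space. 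Similarly your Radon$\to$Fourier-Tamarkin description is reversed: one embeds $\bbR^n\times\bbR\cong\bbR^{n+1}$ and $\check\bbR^n\times\bbR\hookrightarrow S^n\times\bbR$ openly, and the $(n{+}1)$-dimensional Radon kernel pulls back to the Fourier-Tamarkin kernel $k_{\{\la x,y\ra+t-s\ge 0\}}$.

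Most seriously, polarization (Definition~\ref{polardef}) is not descent along $S^n\to\bbP^n$ with orientation twists and isotypic components: it is simply $pl(\cF)=\cF\oplus\tau^{-1}\cF$ for $\tau$ the antipodal map. The link to projective duality is the identity $pl\circ\Phi_\pm(\cF)\cong\Phi_P(\cF)^{\oplus 2}$ for $\bbZ_2$-equivariant $\cF$, which follows from the short exact sequence $0\to k_{S^n\times\check S^n}\to K_+\oplus K_-\to k_{\hat P}\to 0$ (so that $\Phi_+\oplus\Phi_-\cong\Phi_{\hat P}$ after killing local systems) together with a double-cover computation relating $\Phi_{\hat P}$ and $\Phi_P$. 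The heavy bookkeeping you anticipate is therefore unnecessary; the genuine content of the theorem is writing down the correct open embeddings for each ``res'' arrow and checking that each half-space kernel pulls back to the next.
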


The rest of the paper is organized in a simple way. In Section \ref{Sec:Background} we review the microlocal sheaf theory. In each one of Sections \ref{Sec:Radon} -- \ref{Sec:Transforms}, we prove one the of main theorems of the paper.

\smallskip
\noindent\textbf{Acknowledgements.} We would like to thank Eric Zaslow for initiating the problem and advising. We thank Stéphane Guillermou and Pierre Schapira for valuable comments and suggestions for outreaching directions. We also thank Dmitry Tamarkin and Vivek Shende for helpful discussions. An earlier version of the work constitutes to the author's PhD dissertation at Northwestern University. This work is partially supported by the ANR projection ANR-15-CE40-0007 ``MICROLOCAL''.

\section{Background}\label{Sec:Background}
\subsection{Cotangent bundle}
Let $X$ be a real smooth manifold. Its cotangent bundle $T^*X$ is an exact symplectic manifold. Let $(x_1,\dotsb, x_n)$ be local coordinates on $X$ and $(\xi_1,\dotsb,\xi_n)$ be the induced coordinates along the fibre. Then the symplectic form is $\omega =\sum_{i=1}^n dx_i\wedge d\xi_i$, the canonical one-form is $\beta = \sum_{i=1}^n\xi_idx_i$, and $\omega = -d\beta$.

The \textit{antipodal map} on the cotangent bundle is $a: T^*X\rightarrow T^*X, a(x,\xi) = (x,-\xi).$ Let $\Omega\subset T^*X$ be an open subset. The antipodal set is $\Omega^a := \{(x,\xi)\in T^*X \,|\, (x,-\xi) \in \Omega\}.$ With induced symplectic forms from $T^*X$, $\Omega$ and $\Omega^a$ are anti-symplectic.

Suppose $M\subset X$ is a submanifold. The cotangent bundle is denoted by $T^*_MX$. Denote by $0_X$ the zero section of the cotangent bundle and denote by $\dot{T}^*X := T^*X\setminus 0_X $ the cotangent bundle removing zero section. For a subset $Z\subset T^*X$, denote $\dot{Z} := Z\cap \dot{T}^*X$.

Let $\bbR_+$ be the group of positive real numbers. This group has a canonical action on a vector bundle by dilation along the fiber direction. The action fixes each point on the zero section, and acts freely on the complement of the zero section.

\begin{defn}
Let $X,Y$ be two manifolds of the same dimension, and $\Omega_X,\Omega_Y$ be two open subsets in $T^*X$ and $T^*Y$. Let $p_1,p_2$ be projections from $X\times Y$ to the first and second components respectively. Given a smooth conic Lagrangian $L \subset \Omega_X^a\times \Omega_Y$.  If
$$p_1^a|_{L}: L \rightarrow \Omega_X,\quad\textrm{and}\quad p_2|_{L}: L \rightarrow \Omega_Y$$
are diffeomorphisms, then the composition 
$$\chi:=p_2|_{L}\circ (p_1^a|_{L})^{-1}: \Omega_X\rightarrow \Omega_Y$$
is a \textit{contact transform}.
\end{defn}

We remark that a contact transform is a symplectomorphism. The canonical choice of the symplectic form on $(T^*X)^a\times T^*Y$ is $(p_1^a)^{-1}\omega_{(T^*X)^a} + p_2^{-1}\omega_{T^*Y}$. This form is zero over the Lagrangian $L$. Therefore one has $\chi^*(\omega_{\Omega_Y}) = -\omega_{(\Omega_X)^a} = \omega_{\Omega_X}.$

\subsection{Sheaves} We review some concepts in \cite{KS}. Fix a ground field $k$. Suppose $X$ is a smooth manifold, let $D^b(X)$ be the bounded (dg) derived category of sheaf of $k$-modules. The classical derived category is the homological category of the dg derived category. All statements in this paper hold for either version of the derived categories.

\subsubsection{Singular support}
The key concept in the microlocal sheaf category is the \textit{singular support} (or \textit{micro-support}) $SS(\cF)\subset T^*X$ of a sheaf $\cF\in D^b(X)$. We recall the definition:
\begin{defn}[\cite{KS}, Definition 5.1.2]
Let $X$ be a manifold, $\cF\in D^b_{dg}(X)$ and $p = (x,\xi)\in T^*X$. The \textit{singular support} of $\cF$, denoted by $SS(\cF)$, is the subset of $T^*X$ satisfying: $p\notin SS(\cF)$ if and only if there exists an open neighborhood $U$ of $p$ such that for $x_0\in X$ and any $C^1$-real function $\phi$ in a neighborhood of $x_0$, with $\phi(x_0)=0, d\phi(x_0) \in U$, there is
$$(R\Gamma_{\{\phi(x)\geq 0\}}(\cF))_{x_0} = 0.$$
\end{defn}

The singular support $SS(\cF)$ is always a closed conic subset in $T^*X$, and more strongly, involutive \cite{KS}. It is hard to compute in general. Here are some examples.

\begin{eg}\label{SSexample}
Let $X$ be a smooth manifold of dimension $n$.
\begin{enumerate}
\item
If $M$ a smooth closed submanifold, then $SS(k_M) = T_M^*X$.
\item 
Let $\phi(x)$ be a smooth function on $X$ such that the level set $\{\phi(x) = 0\}$ is a closed manifold of dimension $n-1$, and that $d\phi(x)\neq 0$ on $\{\phi(x) =0\}$. Then $S = \{\phi \geq 0\}$ is a closed submanifold of dimension $n$ with a smooth boundary. There is
\begin{equation}\label{closedSS}
{SS}(k_S)= \{(x,\lambda d\phi(x))\,|\, \lambda\phi(x) =0, \lambda\geq 0, \phi(x) \geq 0\}.
\end{equation}
Let $U = \{\phi > 0\}$, then
$${SS}(k_U)= \{(x,\lambda d\phi(x))\,|\, \lambda\phi(x) =0, \lambda\leq 0, \phi(x) \geq 0\}.$$
\end{enumerate}
\end{eg}

The singular support satisfies a triangle inequality. Let $\cF_1\rightarrow \cF_2\rightarrow \cF_3 \xrightarrow{+1}$ be a distinguished triangle in $D^b(X)$, there is
\begin{equation}\label{triangleinequality}
SS(\cF_i) \subset SS(\cF_j)\cup SS(\cF_k),
\end{equation}
for any permutation $\{i,j,k\} = \{1,2,3\}$.

The singular support also has the following functorial properties.

\begin{prop} [\cite{KS}, Proposition 5.4.4 and Proposition 5.4.5]\label{SSfunctorial}
Let $f: Y\rightarrow X$ be a smooth map between manifold. It induces the bundle morphisms:
\begin{equation}\label{SStransform}
T^*Y \xleftarrow{{f_d}} Y\times_XT^*X \xrightarrow{f_\pi} T^*X.
\end{equation}

\begin{enumerate}
\item
Let $\cG\in D^b_{}(Y)$. Assume that $f$ is proper on $supp(\cG)$, then
\begin{equation}\label{SSfunctorial1}
SS(Rf_*\cG) \subset f_\pi({f_d^{-1}} SS(\cG))).
\end{equation}
\item[(1')]
In addition, if $f$ is a closed embedding, the inclusion is an equality.

\item
For any $\cF\in D^b_{}(X)$,
\begin{equation}\label{SSfunctorial2}
SS(f^{-1}\cF) =f_d(f_\pi^{-1} SS(\cF))).
\end{equation}
\end{enumerate}

\end{prop}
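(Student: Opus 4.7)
The plan is to work from the microlocal criterion that $p = (x_0, \xi_0) \notin SS(\cF)$ iff $(R\Gamma_{\{\phi \geq 0\}}\cF)_{x_0}$ vanishes for all $C^1$ test functions $\phi$ with $\phi(x_0) = 0$ and $d\phi(x_0)$ close to $\xi_0$. Part (2) is then controlled by how $f^{-1}$ commutes with $R\Gamma_Z$, and part (1) by how $Rf_*$ commutes with $R\Gamma_Z$ under properness.

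For part (2), suppose $(y_0, \eta_0) \notin f_d(f_\pi^{-1}SS(\cF))$ and take a test function $\psi$ on $Y$ with $d\psi(y_0) = \eta_0$. Since $f$ is smooth, $f_d$ is fiberwise injective. If $\eta_0 = f_d(y_0, \xi_0)$ for some $\xi_0$, then by hypothesis $(f(y_0), \xi_0) \notin SS(\cF)$, and the submersion structure lets us locally factor $\psi = \phi \circ f$ so that $\{\psi \geq 0\} = f^{-1}\{\phi \geq 0\}$. The natural isomorphism $f^{-1}R\Gamma_{\{\phi \geq 0\}}\cF \cong R\Gamma_{\{\psi \geq 0\}}f^{-1}\cF$ combined with the stalk identity $(f^{-1}\cF)_{y_0} \cong \cF_{f(y_0)}$ yields the required vanishing. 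If $\eta_0$ lies outside the image of $f_d$, its nontrivial vertical component already forces $\eta_0 \notin SS(f^{-1}\cF)$, because $f^{-1}\cF$ is locally constant along fibers of $f$. For the reverse inclusion, starting from a test function on $Y$ one constructs a test function on $X$ producing the same cohomological obstruction, via the projection-like local model of a submersion.

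For part (1), the main tool is the proper base change isomorphism $R\Gamma_Z Rf_*\cG \cong Rf_* R\Gamma_{f^{-1}Z}\cG$, valid when $f$ is proper on $\text{supp}(\cG)$. Taking $Z = \{\phi \geq 0\}$ and the stalk at $x_0$, properness expresses the stalk of $Rf_*$ as sections over a compact piece of the fiber, so the vanishing reduces to $(R\Gamma_{\{\phi \circ f \geq 0\}}\cG)_y = 0$ for all $y \in f^{-1}(x_0) \cap \text{supp}(\cG)$. Each such vanishing is exactly what the hypothesis $(x_0, d\phi(x_0)) \notin f_\pi(f_d^{-1}SS(\cG))$ guarantees, since for every such $y$ one has $(y, d(\phi \circ f)(y)) = f_d(y, d\phi(x_0)) \notin SS(\cG)$.

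For (1'), when $f$ is a closed embedding the fiber $f^{-1}(x_0)$ is at most the single point $y_0$, and proper base change identifies $R\Gamma_{\{\phi \geq 0\}}(Rf_*\cG)_{x_0}$ directly with $R\Gamma_{\{\phi \circ f \geq 0\}}(\cG)_{y_0}$; non-vanishing on one side is equivalent to non-vanishing on the other, delivering the reverse inclusion. The main obstacle I expect is the reverse inclusion in (2): producing a test function on $X$ from one on $Y$ that preserves the cohomological signal requires the non-characteristic deformation lemma to propagate vanishing across a submersion, which is the most technical ingredient throughout.
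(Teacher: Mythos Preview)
The paper does not give a proof of this proposition at all: it is stated as background, with an explicit citation to \cite{KS}, Propositions 5.4.4 and 5.4.5, and no argument is supplied. So there is nothing in the paper to compare your attempt against.

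That said, a few comments on your sketch. You repeatedly use that $f_d$ is fiberwise injective and that $f$ admits a local projection model; both of these hold only when $f$ is a \emph{submersion}, not for an arbitrary $C^\infty$ map. In Kashiwara--Schapira's terminology ``smooth morphism'' does mean submersion, and indeed the equality in (2) as stated requires this hypothesis; but then (1') about closed embeddings is a separate statement with a separate hypothesis, not a specialization of the same setup. Your write-up blurs this, treating ``$f$ smooth'' as simultaneously giving submersion-type local models in (2) and embedding-type fibers in (1'). If you want to reconstruct the argument, you should split cleanly: (2) under the submersion hypothesis (where your local factorization $\psi = \phi \circ f$ idea is correct), and (1)/(1') under properness and the closed-embedding hypothesis respectively. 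For the reverse inclusion in (1'), your one-sentence claim that the stalk identification ``delivers the reverse inclusion'' skips the step of extending a test function $\psi$ on $Y$ to a $\phi$ on $X$ with prescribed $d\phi(x_0)=\xi_0$ restricting to $d\psi(y_0)$; this is doable but should be said. The reverse inclusion in (2) is, as you anticipate, the most delicate part and is not really addressed in your outline beyond naming the non-characteristic deformation lemma.
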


Using singular support, one can associate a category of sheaves to a subset in the cotangent bundle. Let $V\subset T^*X$ be a subset, we define the full subcategory $D^b_{V}(X)\subset D^b(X)$ to be
$$D^b_{V}(X) := \{\cF\in D^b_{}(X) \,|\, SS(\cF) \subset V\}.$$

\subsubsection{Microlocaliztion}

A manifold $X$ with an $\bbR_+$ action is called a \textit{conic space}. A sheaf $\cF\in D^b(X)$ on a conic space is called a \textit{conic sheaf} if the restriction to each orbit is a constant sheaf. Let $D^b_{conic}(X)\subset D^b(X)$ be the full subcategory of conic sheaves. Real vector bundles are examples of conic spaces where $\bbR_+$ acts as dilation along the fibers.

The microlocalization functor was first introduced by Sato \cite{Sa}. Let $M$ be a closed submanifold of $X$, the microlocalization functor along $M$
$$\mu_M: D^b(X)\rightarrow D^b_{conic}(T_M^*X),$$
captures the singular support along $T_M^*X$. It was later generalized to the $\mu hom$ bifunctor by Kashiwara-Schapira \cite{KS1}:
$$\mu hom:D^b_{}(X)^{op}\otimes D^b_{}(X)\rightarrow D^b_{conic}(T^*X).$$
The $\mu hom$ functor detects singular support, namely $supp(\mu hom(\cF,\cF)) = SS(\cF)$.

\subsubsection{Convolution}

Let $X,Y$ be two smooth manifolds, and let $\pi_1,\pi_2$ be the projections from $X\times Y$ to the first and second components. One can associate to a sheaf $K\in D^b(X\times Y)$ a \textit{convolution} functor $\Phi_K: D^b(X) \rightarrow D^b(Y)$,
\begin{align}
\begin{split}
\Phi_K(\cF) = R\pi_{2!}\left(K\otimes \pi_1^{-1}\cF\right).
\end{split}
\end{align}
The convolution has a right adjoint $\Psi_K(\cG) = R\pi_{1*} R\cH om(K, \pi_2^!\cG)$. The sheaf $K$ is called the \textit{kernel} of the convolution.

\subsubsection{Quantized contact transform}

Suppose $\Omega\subset T^*X$ is a subset. Define a localized category
$D^b_{}(X,\Omega)$ to be the quotient (in the dg setting, the quotient is in the sense of \cite{Dr}),
$$D^b_{}(X,\Omega) := D^b_{}(X)/ D^b_{T^*X\setminus \Omega} (X).$$
Objects in $D^b_{}(X,\Omega)$ are the same as that in $D^b_{}(X)$. If a morphism $f:\cF\rightarrow \cG$ in $D^b(X)$ satisfies 
$$SS(Cone(f))\cap \Omega = \emptyset,$$
it is considered as an isomorphism in $D^b_{dg}(X,\Omega)$. Morphisms in $hom_{D^b_{}(X,\Omega)}(\cF, \cG)$ are roofs of the form $\{\cF\xleftarrow{\sim} \cH\rightarrow \cG\}$, and compositions are given by the pull back diagrams.

\begin{defn}\label{QCT}
Let $X$ and $Y$ be two manifolds of the same dimension. Suppose $\Omega_X\subset \dot{T}^*X$ and $\Omega_Y\subset \dot{T}^*Y$ are two open subsets, and $\chi: \Omega_X \rightarrow \Omega_Y$ is a contact transform.

A sheaf $K\in D^b(X\times Y)$ is a \textit{quantized contact transform} from $(X,\Omega_X)$ to $(Y,\Omega_Y)$ if:
\begin{enumerate}
\item $K$ is cohomologically constructible, \footnote{A technical definition, see \cite[Definition 3.4.1]{KS}.} 
\item $\big((p_1^a)^{-1}(\Omega_X)\cup p_2^{-1}(\Omega_Y)\big)\cap SS(K) \subset L$,
\item the natural morphism $k_{L}\rightarrow \mu hom(K,K)|_L$ is an isomorphism in $D^b(L)$.
\end{enumerate}
\end{defn}

\begin{thm}[\cite{KS}, Theorem 7.2.1]\label{ContacttransformKS}
Suppose $K\in D^b(X\times Y)$ is a quantized contact transform from $(X,\Omega_X)$ to $(Y,\Omega_Y)$. Then
\begin{enumerate}
\item
The convolution
$$\Phi_K: D^b(X,\Omega_X)\rightarrow D^b(Y,\Omega_Y)$$
is an equivalence of categories. 
\item
If $\cF_1,\cF_2$ are two objects in $D^b(X,\Omega_X)$, there is a natural isomorphism in $D^b(Y,\Omega_Y)$:
$$\chi_* \mu hom(\cF_1,\cF_2) \cong \mu hom(\Phi_K(\cF_1), \Phi_K(\cF_2)).$$
\end{enumerate}

\end{thm}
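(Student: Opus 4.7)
The plan is to exhibit an explicit two-sided inverse kernel and reduce the equivalence to identifying two compositions of kernels with the diagonal kernels. The natural candidate for the inverse kernel is
$$K^{*} := v_{*} R\mathcal{H}om(K, \pi_{2}^{!} k_{Y}) \in D^{b}(Y \times X),$$
where $v: X \times Y \to Y \times X$ is the swap, so that $\Phi_{K^{*}}$ coincides (up to cohomological shift) with the right adjoint $\Psi_{K}$ restricted to microlocalized sections. By the standard compatibility of kernel convolution with composition of functors, $\Phi_{K^{*}} \circ \Phi_{K}$ is the convolution by $K^{*} \circ K$, and similarly $\Phi_{K} \circ \Phi_{K^{*}}$ is convolution by $K \circ K^{*}$. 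It therefore suffices to prove
$$K^{*} \circ K \cong k_{\Delta_{X}}, \qquad K \circ K^{*} \cong k_{\Delta_{Y}},$$
in the appropriate localized categories over $X \times X$ and $Y \times Y$.

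First I would estimate singular supports via Proposition \ref{SSfunctorial}. The hypothesis that $L$ is the graph of a contact transform $\chi$ forces the conic Lagrangian $L^{*}$ associated to $K^{*}$ to be the graph of $\chi^{-1}$. The set-theoretic composition $L^{*} \circ L$ over the relevant open subset is exactly the conormal of the diagonal $\Delta_{X}$, so after checking the usual properness condition for composition of microsupports (which follows from the fact that $p_{1}|_{L}$ and $p_{2}|_{L}$ are diffeomorphisms), Proposition \ref{SSfunctorial} delivers
$$SS(K^{*} \circ K) \cap \bigl( (p_{1}^{a})^{-1}(\Omega_{X}) \cup p_{2}^{-1}(\Omega_{X}) \bigr) \subset T^{*}_{\Delta_{X}}(X \times X).$$

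The crucial step is to promote this microsupport estimate to an actual isomorphism with $k_{\Delta_{X}}$, and here the simpleness condition $k_{L} \cong \mu hom(K,K)|_{L}$ enters. Using the compatibility of $\mu hom$ with convolution together with the adjunction $(\Phi_{K}, \Psi_{K}) = (\Phi_{K}, \Phi_{K^{*}})$, one rewrites $\mu hom(K,K) \cong \mu hom(k_{\Delta_{X}}, K^{*} \circ K)$ microlocally along $\Delta_{X}$. The hypothesis then says that $K^{*} \circ K$ is a simple sheaf of zero shift along $\Delta_{X}$ over the localization region. Since simple sheaves along a smooth conic Lagrangian are classified up to isomorphism by their shift (they are the minimal extensions of constant sheaves in the sense of KS, Chapter 7), the localized isomorphism $K^{*} \circ K \cong k_{\Delta_{X}}$ follows. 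Swapping the roles of $X$ and $Y$ gives $K \circ K^{*} \cong k_{\Delta_{Y}}$, and therefore $\Phi_{K}$ is an equivalence of localized categories.

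The main obstacle is precisely this passage from a $\mu hom$-level simpleness hypothesis to a sheaf-level isomorphism with the constant sheaf on the diagonal: it is not formal from the singular-support estimate, and requires the microlocal Morse-theoretic classification of simple sheaves along a smooth Lagrangian as well as delicate bookkeeping of cohomological shifts (where the cohomological constructibility hypothesis on $K$ is needed to make duality and base change commute). Once part $(1)$ is secured, part $(2)$ follows by functoriality: applying $\mu hom$ to the natural identification $\Phi_{K}(\cF_{i}) \cong K \circ \cF_{i}$ and invoking the general compatibility of $\mu hom$ with kernel convolution produces the canonical isomorphism $\chi_{*} \mu hom(\cF_{1},\cF_{2}) \cong \mu hom(\Phi_{K}(\cF_{1}), \Phi_{K}(\cF_{2}))$ in $D^{b}(Y,\Omega_{Y})$, the contact transform $\chi$ appearing as the support map that transports microlocal data from $\Omega_{X}$ to $\Omega_{Y}$.
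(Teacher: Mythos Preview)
This theorem is not proven in the paper: it is quoted verbatim as \cite[Theorem 7.2.1]{KS} and used as a black box (notably in the proof of Theorem~\ref{RTn}, in Proposition~\ref{simpleRTknotinv}, and in Lemma~\ref{restriction}). There is therefore no ``paper's own proof'' to compare your proposal against.

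For what it is worth, your sketch follows the standard strategy one finds in \cite[Chapter~7]{KS}: build the candidate inverse kernel $K^{*}$, compute the microsupport of the compositions $K^{*}\circ K$ and $K\circ K^{*}$ to see they land in the conormal of the diagonal, and then use the simpleness hypothesis $k_L\xrightarrow{\sim}\mu hom(K,K)|_L$ to pin down the composed kernel as the constant sheaf on the diagonal (up to shift) in the localized category. You have correctly isolated the genuinely hard step, namely the passage from a $\mu hom$ computation to an actual isomorphism of sheaves in $D^b(X\times X,\Omega_X^a\times\Omega_X)$; in \cite{KS} this is handled via the machinery of pure/simple sheaves along a smooth Lagrangian (Definition~7.5.4 and Proposition~7.5.3 there), and your appeal to ``microlocal Morse-theoretic classification of simple sheaves'' is pointing at the right body of results even if the details are suppressed. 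The derivation of part~(2) from the kernel calculus and the functoriality of $\mu hom$ under proper direct image is likewise the standard route.
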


\subsubsection{Simple sheaves}
Let $p = (x,\xi) \in \dot{T}^*X$. A \textit{test function} of $p$ is a locally defined function $\phi$ is a neighborhood of $x$ such that $\phi(x)=0$ and $d\phi(x) = \xi$. Further let $\Lambda\subset \dot{T}^*X$ be a closed conic Lagrangian and assume $p\in \Lambda$. A test function $\phi$ of $p$ is \textit{transverse} if $\Gamma_{df}$ intersects $\Lambda$ transversely at $p$. A sheaf $\cF$ is simple at $p$ if it satisfies one of the following equivalent conditions:
\begin{enumerate}
\item
There is a transverse test function $\phi$ of $p$ such that
$R\Gamma_{\{\phi\geq 0\}}(\cF)_x \cong k[d]$
for some integer $d$.
\item 
$\mu hom(\cF,\cF)_p \cong k.$
\end{enumerate}
The sheaf $\cF$ is \textit{simple} along $\Lambda$ if it is simple at every point on $\Lambda$.

\begin{lem}\label{simplelemma}
Let $X$ be a smooth manifold and $S$ a smooth submanifold with a smooth boundary of codimension $1$. The constant sheaf $k_S$ supported on $S$ is simple along its singular support. 
\end{lem}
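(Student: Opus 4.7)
The plan is to verify criterion (1) of the definition of simple sheaves at an arbitrary point of the non-zero part of $SS(k_S)$ via a direct local computation.

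First, I would reduce to a local model. Pick a boundary point $x_0\in\del S$. Since $\del S$ is a smooth hypersurface cut out by a defining function $\phi$ as in Example \ref{SSexample}(2), the implicit function theorem produces local coordinates $(x_1,x')$ near $x_0$ in which $x_0=0$ and $\phi(x)=x_1$, so that $S=\{x_1\ge 0\}$. Under formula (\ref{closedSS}), the non-zero part of $SS(k_S)$ near $x_0$ becomes the outward conormal $\{(0,x';\lambda\,dx_1):\lambda>0\}$, and a general point there has the form $p=(0,\lambda_0\,dx_1)$ with $\lambda_0>0$.

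Next, I would take the test function
$$\psi(x):=\lambda_0 x_1+\tfrac{1}{2}|x'|^2,$$
which satisfies $\psi(0)=0$ and $d\psi(0)=\lambda_0\,dx_1$. Transversality of $\Gamma_{d\psi}$ with $SS(k_S)$ at $p$ reduces, after a short linear-algebra check using the explicit parametrizations of $T_p\Gamma_{d\psi}$ and $T_p SS(k_S)$, to non-degeneracy of the tangential Hessian $(\del_i\del_j\psi(0))_{i,j\ge 2}$, which here equals the identity matrix $I_{n-1}$.

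Finally, I would compute the stalk $R\Gamma_{\{\psi\ge 0\}}(k_S)_0$ via the distinguished triangle
$$R\Gamma_{\{\psi\ge 0\}}(k_S)\rightarrow k_S\rightarrow Rj_*j^{-1}k_S\xrightarrow{+1},$$
where $j:\{\psi<0\}\hookrightarrow\bbR^n$ is the open embedding of the complement of $\{\psi\ge 0\}$. Since $\lambda_0>0$, the set $\{\psi<0\}$ is contained in $\{x_1<0\}$, which is disjoint from $S$; hence $j^{-1}k_S=0$ and the third term vanishes, yielding $R\Gamma_{\{\psi\ge 0\}}(k_S)_0\cong (k_S)_0\cong k$ concentrated in degree $0$. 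Therefore $k_S$ is simple at $p$, and as $p$ was arbitrary in the non-zero part of $SS(k_S)$, it is simple along its singular support.

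The main (minor) obstacle is the transversality step: the naive choice $\psi_0(x)=\lambda_0 x_1$ fails to be transverse because its Hessian vanishes, so one must introduce a small quadratic perturbation and check that it actually achieves transversality without disturbing the stalk computation. Once that is arranged, the support consideration makes the final calculation immediate.
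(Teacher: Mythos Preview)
Your proof is correct and follows essentially the same strategy as the paper: reduce to local coordinates with $S=\{x_1\ge 0\}$, perturb the linear test function by a positive-definite quadratic term in the tangential directions to achieve transversality, and compute the stalk using the observation that $\{\psi<0\}\cap S=\emptyset$. Your stalk computation via the closed-support triangle $R\Gamma_{\{\psi\ge 0\}}(k_S)\to k_S\to Rj_*j^{-1}k_S$ is in fact slightly cleaner than the paper's $RHom$ manipulation and gives $k$ in degree $0$ (the paper records $k[1]$, apparently due to a minor shift slip in identifying the cone), but in either case the result has the form $k[d]$ and simplicity follows.
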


The lemma follows a short argument using microlocal techniques: $k_S$ is microlocally isomorphic to the constant sheaf on the boundary, which is simple. We provide a pedestrian proof using the first definition above.
\begin{proof}
Because the singular support is locally defined and the simpleness is a local property, one can assume $X =\bbR^n$ with coordinates $(x_1,\dotsb, x_n)$, and $S$ is the upper half space $\{x_n\geq 0\}$. Suppose $(x,\xi)$ are the canonical coordinates on $T^*X$, by Example \ref{SSexample}, 
$$\Lambda := SS(k_S)=\{(x_i,\xi_i)\in T^*\bbR^n \,|\,  x_n=0, \xi_n\geq 0,\textrm{ and }\xi_i =0\textrm{ for }i\neq n\}.$$

For $p=(0, \lambda dx_n)$ and $R>0$, we consider a test function $\phi(x_i)= \lambda x_n + \sum_{i=1}^{n-1}Rx_i^2$. The graph of its differential is $\Gamma_{d\phi} = \{\xi_n=\lambda, \xi_i= 2x_i \textrm{ for } i\neq n\}$. The tangent spaces of $\Lambda$ and $\Gamma_{d\phi}$ at $p$ are
\begin{align*}
T_p\Lambda &= \textrm{Span}\{\del_{x_i} \text{ for } i\neq n, \del_{\xi_n}\}; \\
T_p\Gamma_{d\phi} &= \textrm{Span}\{\del_{x_i} + 2R\del_{\xi_i} \text{ for } i\neq n, \del_{x_n}\}.
\end{align*}
Hence $\Lambda\cap \Gamma_{d\phi}$ transversely at $p$.

Finally we compute $R\Gamma_{\{\phi\geq 0\}}(k_S)_0$. The stalk is a directed limit of $R\Gamma(U,R\Gamma_{\{\phi\geq 0\}}(k_S))$. Choose the opens to be a nested family $\{U_n\}$ of cubes. Each of $(U_n, S\cap U_n)$ equals to $(X,S)$ by a dilation in coordinates. Apply $RHom(-,k_S)$ to the canonical short exact sequence
$$0 \rightarrow k_{\{\phi<0\}}\rightarrow k_{X} \rightarrow k_{\{\phi\geq 0\}}\rightarrow 0,$$
we get
\begin{align*}
R\Gamma(X,R\Gamma_{\{\phi\geq 0\}}(k_S)) &= RHom ( k_{\{\phi\geq 0\}}, k_S)  \\
			&= Cone(RHom (k_X, k_S)\rightarrow RHom( k_{\{\phi<0\}},k_S)) \\
			&= Cone(R\Gamma(X,k_S)\rightarrow R\Gamma(\{\phi<0\},k_S)) \\
			&= Cone(k\rightarrow 0) = k[1].
\end{align*}
It is the same result for any neighborhood of $0$ in the nested family. By passing to the direct limit, $R\Gamma_{\{\phi\geq 0\}}(k_S)_0 \cong k[1]$. It is a similar calculation for other points on $\Lambda$. We conclude that $k_S$ is simple.
\end{proof}

\section{Radon transform}\label{Sec:Radon}

\subsection{Radon transform} A hyperplane in $\bbR^n$ is characterized as
$$P_{(\hat{n}, r)}: = \{\vec{x}\in \bbR^n \,|\, \vec{x}\cdot \hat{n} = r\},$$
where $\hat{n}$ is a unit length vector representing a normal direction of the plane, and $r$ is the signed distance from the plane to the origin. There is a symmetry between the planes $P_{(\hat{n}, r)} = P_{(-\hat{n}, -r)}$, but we do not intend to identify them. The space of hyperplanes is diffeomorphic to $S^{n-1}\times \bbR$, with coordinates $(\hat{n},r)$.

Let $f$ be a fast decreasing function or a compactly supported smooth function on $\bbR^n$, its Radon transform $R(f)$ is a function on $S^{n-1}\times \bbR$:
$$R(f)(\hat{n},r) = \int_{P_{(\hat{n}, r)}} f.$$
The integration of $R(f)$ along the seconding component is
\begin{equation}\label{classicalintRadon}
\int_{-\infty}^r  R(f)(\hat{n},s)ds = \int_{\{\vec{x}\, \cdot\, \hat{n} \leq r\}} f(\vec{x}) d\vec{x}.
\end{equation}

We define the Radon transform for sheaves. 

\begin{defn} \label{RTfSC}
Let $\pi_1, \pi_2$ be the projections from $\bbR^n\times S^{n-1}\times\bbR$ as follows:
\begin{center}
\begin{tikzpicture}
  \node (A){$\bbR^n\times (S^{n-1}\times  \bbR)$};
  \node (C)[below of=A,node distance=1.5cm] {};
  \node (D)[left of=C,node distance=2.5cm] {$\bbR^n$};
  \node (F)[right of=C,node distance=2.5cm]{$S^{n-1}\times \bbR$};
  \draw[->] (A) to node [swap] {$\pi_{1}$} (D);
  \draw[->] (A) to node []{$\pi_2$}(F);
\end{tikzpicture}
\end{center}
Let $k_{\{\vec x \cdot \hat n\leq r\}}$ be the constant sheaf supported on $\{\vec x\cdot \hat n\leq r\}\subset \bbR^n\times S^{n-1}\times \bbR$. The Radon transform for sheaves is defined to be 
\begin{align}
\begin{split}
\Phi: D^b(\bbR^n) &\rightarrow D^b(S^{n-1}\times \bbR), \\
		\cF &\mapsto R\pi_{2!}(k_{\{ \vec{x}\, \cdot\, \hat{n}\leq r\}}\otimes \pi_1^{-1}\cF).
\end{split}
\end{align}

It is left adjoint to $\Psi: D^b(S^{n-1}\times \bbR) \rightarrow D^b(\bbR^n)$, $\Psi(\cG) = R\pi_{1*}R\cH om(k_{\{ \vec{x}\, \cdot\, \hat{n}\leq r\}}, \pi_2^{!}\cG).$
\end{defn}
\begin{rmk}
At a fixed point $(\hat{n},r)\in S^{n-1}\times \bbR$, the stalk of the transformed sheaf $\Phi(\cF)$ is the compactly supported sheaf cohomology of $\cF$ over a sublevel set:
$$\Phi(\cF)_{(\hat{n},r)} = R\Gamma_c(\{\vec{x} \cdot \hat{n}\leq r\},\cF).$$
The analogy between this expression and (\ref{classicalintRadon}) gives the name of the functor.
\end{rmk}

\subsection{Equivalence}

In this section, we prove:
\begin{thm}\label{RTn}
The Radon transform for sheaves induces an equivalence between the localized sheaf categories
\begin{align*}
\Phi: D^b(\bbR^n,\dot{T}^*\bbR^n) &\rightarrow D^b(S^{n-1}\times \bbR, T^{*,+}(S^{n-1}\times \bbR)),\\
		\cF &\mapsto R\pi_{2!}(k_{\{ \vec{x}\, \cdot\, \hat{n}\leq r\}}\otimes \pi_1^{-1}\cF).
\end{align*}
Here $T^{*,+}(S^{n-1}\times \bbR) = T^*S^{n-1}\times T^{*,+}\bbR$ whereas $T^{*,+}\bbR = \{(r,\eta_r)\in T^*\bbR\,|\, \eta_r >0\}$. 
\end{thm}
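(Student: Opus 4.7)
The plan is to realize the Radon functor $\Phi$ as the quantized contact transform associated with its kernel $K := k_{\{\vec x\cdot\hat n\leq r\}}\in D^b(\bbR^n\times (S^{n-1}\times\bbR))$, and to apply Theorem \ref{ContacttransformKS} to the open subsets $\Omega_X = \dot T^*\bbR^n$ and $\Omega_Y = T^{*,+}(S^{n-1}\times\bbR)$. Accordingly, the task reduces to verifying the three conditions of Definition \ref{QCT} for $K$.

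The first technical step is to compute $SS(K)$. Writing $\psi(\vec x, \hat n, r) := r - \vec x\cdot \hat n$, one has $K = k_{\{\psi\geq 0\}}$ and $\partial_r\psi\equiv 1$, so $d\psi$ is nowhere zero on the hypersurface $\{\psi=0\}$. Example \ref{SSexample}(2) then describes $\dot{SS}(K)$ as the positive conormal ray bundle of $\{\psi=0\}$. To make this explicit, one must express $d\psi$ as a covector on $T^*(\bbR^n\times S^{n-1}\times \bbR)$; the component tangent to $S^{n-1}$ at $\hat n$ is $-\vec x^\perp := -(\vec x-(\vec x\cdot\hat n)\hat n)$. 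After applying the antipodal map on the first factor, the corresponding Lagrangian is
\[
L = \bigl\{\bigl((\vec x,\lambda\hat n);\,(\hat n,\vec x\cdot\hat n,-\lambda\vec x^\perp,\lambda)\bigr) : \vec x\in\bbR^n,\ \hat n\in S^{n-1},\ \lambda>0\bigr\}.
\]
The projections $p_1^a|_L$ and $p_2|_L$ are diffeomorphisms with explicit inverses (namely $\hat n = \xi/|\xi|$, $\lambda = |\xi|$, $r=\vec x\cdot\hat n$ on one side; $\lambda = \eta_r$, $\vec x^\perp = -\eta_{\hat n}/\lambda$, $\vec x = \vec x^\perp + r\hat n$ on the other), so $L$ is the graph of a contact transform $\chi\colon \dot T^*\bbR^n\xrightarrow{\sim}T^{*,+}(S^{n-1}\times \bbR)$.

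With $L$ identified, the three conditions of Definition \ref{QCT} become straightforward. Cohomological constructibility of $K$ is immediate as it is the constant sheaf on a closed semi-algebraic half-space. The zero-section portion of $SS(K)$ has both its $\bbR^n$- and $\bbR$-components equal to zero, hence is disjoint from both $(p_1^a)^{-1}(\Omega_X)$ and $p_2^{-1}(\Omega_Y)$; thus $SS(K)$ intersected with the union of these opens is exactly the $\lambda>0$ part, which is $L$. Simpleness of $K$ along $L$ is precisely Lemma \ref{simplelemma} applied to the smooth codimension-one boundary $\{\vec x\cdot\hat n = r\}$. Theorem \ref{ContacttransformKS} then yields the asserted equivalence.

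The main obstacle, as is usual in arguments of this kind, lies in the cotangent-bundle bookkeeping of the second step: correctly identifying the tangential component of $d\psi$ along the sphere factor, and checking that $L$ is simultaneously the graph of a diffeomorphism over both cotangent bundles. Once this geometric picture is firmly in place, the verification of Definition \ref{QCT} and the invocation of Theorem \ref{ContacttransformKS} are essentially formal.
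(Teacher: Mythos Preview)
Your proposal is correct and follows essentially the same route as the paper: compute $\dot{SS}(K)$ via Example \ref{SSexample}(2), extract the tangential component of the differential along the sphere factor (the paper does this by an explicit Gram--Schmidt projection of $dF$ against $dG=2\hat n\,d\hat n$, arriving at the same $-\vec x^\perp$), verify that the resulting Lagrangian is the graph of a contact transform $\dot T^*\bbR^n\xrightarrow{\sim}T^{*,+}(S^{n-1}\times\bbR)$, and then check the three hypotheses of Definition \ref{QCT} exactly as you outline (constructibility of a closed half-space, disjointness of the zero-section part of $SS(K)$ from both opens, and simpleness via Lemma \ref{simplelemma}). The only cosmetic differences are sign conventions ($\psi=r-\vec x\cdot\hat n$ versus the paper's $F=\vec x\cdot\hat n-r$) and that you present $L$ directly in graph form after the antipodal flip, whereas the paper writes $L=\dot{SS}(K)\subset T^*(\bbR^n\times S^{n-1}\times\bbR)$ and then composes $p_2|_L\circ(p_1^a|_L)^{-1}$.
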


\begin{proof}
Let $K$ be the convolution kernel of the Radon transform. By bullet points, we organize the proof into parts of: (1) fixing conventions, (2) computing the contact transform determined by $K$, and (3) showing that $K$ satisfying the three properties being a quantized contact transform. By Theorem \ref{ContacttransformKS} we get the desired equivalence of categories.

\medskip
$\bullet$ Conventions.  
Let $\vec{x}\cdot \vec{y}$ be the standard flat metric on Euclidean space and let $\|\vec{x}\|$ be the standard norm. 

We use vectors as global coordinates.
A point on $T^*\bbR^n \cong \bbR^n\times (\bbR^n)^*$ is denoted by $p = (\vec{x},\vec{\xi})$, where $\vec{x}$ and $\vec{\xi}$ are $n$-tuple vectors. As to $T^*(S^{n-1}\times \bbR) = T^*S^{n-1}\times T^*\bbR$, we identify $S^{n-1}$ with the unit sphere in $\bbR^n$, and its cotangent bundle $T^*S^{n-1}$  with a sub-bundle of $T^*\bbR^n|_{S^{n-1}}$:
$$T^*S^{n-1}= \{(\hat{n},\vec{\eta})\in T^*\bbR^n \,|\, \|\hat{n}\|=1, \hat{n}\cdot \vec\eta =0\}.$$
Let $(r,\eta_r)$ be the coordinates on $T^*\bbR$.

Observe that the Radon transform is the convolution by $k_S$, where $S = \{\vec{x}\cdot  \hat{n} \leq r\}\subset \bbR\times (S^{n-1}\times \bbR)$. Let $\del S$ be the boundary of $S$. It is a smooth submanifold of codimension one.

\medskip
$\bullet$ Contact transform.
We first verify that the singular support of $K = k_{\{\vec{x}\cdot \hat{n}\leq r\}}$ induces a symplectomorphism
\begin{align}\label{Chiformula}
	\begin{split}
	\chi: \dot{T}^*\bbR^n &\rightarrow T^{*,+}(S^{n-1}\times \bbR), \\
		 (\vec{x},\vec\xi) &\mapsto \big(\,\frac{\vec\xi}{\|\vec\xi\|}\, ,\, \frac{ \vec{x}\cdot \vec\xi}{\|\vec\xi\|}\, ,\, -\|\vec\xi\|\vec{x} + \frac{\vec\xi (\vec{x}\cdot \vec\xi)}{\|\vec\xi\|} \, ,\, \|\vec\xi\|\,\big).
	\end{split}
\end{align}
We remark that the map is invertible,
$$\chi^{-1}(\hat{n}, r, \vec{\eta},\eta_r) = \big(-\frac{1}{\eta_r}\vec\eta + r\hat{n}\, ,\, \eta_r \hat{n}\big),$$
and preserves the dilation action -- for any positive real number $\lambda$,
$$
\chi^{-1} (\hat{n}, r, \lambda\vec{\eta},\lambda\eta_r)= \big(-\frac{1}{\lambda\eta_r}\lambda\vec\eta + r\hat{n}, \lambda\eta_r \hat{n}\big) = \lambda\cdot  \big(-\frac{1}{\eta_r}\vec\eta + r\hat{n}, \eta_r \hat{n}\big) = \lambda \cdot \chi^{-1} (\hat{n}, r, \vec{\eta},\eta_r).
$$

Define $L = \dot {SS}(k_S)\subset \dot{T}^*(\bbR^n\times S^{n-1}\times \bbR)$. Set $F(\vec{x},\hat{n},r) :=\vec{x}\cdot \hat n -r$ and $G(\vec{x},\hat{n},r):= \hat{n}\cdot \hat{n}-1$. By Example \ref{SSexample}, $L$ is contained in the conormal bundle of $\del S$ in $\bbR^n\times(S^{n-1}\times \bbR)$, its global coordinates are characterized by the following two constraints.
\begin{enumerate}
\item
The hypersurface $\del S\subset \bbR^n\times S^{n-1}$ is the level set $\{F=0 \}$ where $F(\vec{x},\hat{n},r) :=\vec{x}\cdot \hat n -r$. Hence the cotangent bundle of $\del S$ consists of covectors in $T^*(\bbR^n\times S^{n-1}\times \bbR)$ that are orthogonal to $dF$.
\item
We think of $\bbR^n \times S^{n-1}\times \bbR$ as a subspace of $\bbR^{2n+1}$. A covector being in $T^*(\bbR^n\times S^{n-1}\times \bbR)$ rather than just $T^*\bbR^{2n+1}$ requires this covector is orthogonal to $dG$, where $G(\vec{x},\hat{n},r):= \hat{n}\cdot \hat{n}-1$.
\end{enumerate}
A conormal vector is perpendicular to $dG$ and the orthogonal complement of $dF$. We can take the projection of $dF$ to the orthogonal complement of $dG$, 
$$dF - \frac{dG \la dF,dG\ra}{\|dG\|^2}.$$
Since $dF = \hat n\, d\vec x + \vec x \,d\hat n - dr$, and $dG = 2\hat n\, d\hat n$, it equals to
$$(\hat{n},\vec{x}, -1) - (0,2\hat {n}, 0 )\frac{2\hat n \cdot \vec x}{4} = (\hat{n}, \vec x-r\hat n,-1).$$

By (\ref{closedSS}), non-zero covectors in $SS(k_S)$ point to the interior. Therefore
$$L = \left\{\big((\vec{x},\hat{n},r),\ell(-\hat n, -\vec x + r\hat n, 1)\big)\in T^*(\bbR^n\times S^{n-1}\times \bbR) \,|\, \hat n\cdot \hat n =1,\vec x\cdot\hat n = r,\ell>0 \right\}.$$
Here the coordinates on $T^*(\bbR^n\times S^{n-1}\times \bbR)$ are in forms of $\big((\vec{x},\hat{n},r),(\vec{\xi}, \vec{\eta},\eta_r)\big)$.

Let $a_{T^*X}:(T^*X)^a \xrightarrow{\sim} T^*X$ be the antipodal map. For $X=\bbR^3$ and $Y=S^{n-1}\times \bbR$, we consider the identification
$$a_{T^*X}\times id_{T^*Y}: (T^*X)^a \times T^*Y \rightarrow T^*(X\times Y).$$
The Lagrangian $L$ sits inside $T^*(\bbR^n\times S^{n-1}\times \bbR)$. Project $L$ to each of $T^*\bbR^n$ and $T^*(S^{n-1}\times \bbR)$
\begin{align*}
p_1^a|_L: \big((\vec{x},\hat{n},r),\ell(-\hat n, -\vec x + r\hat n, 1)\big) &\mapsto (\vec{x}, \ell\hat{n}),\\
p_2|_L: \big((\vec{x},\hat{n},r),\ell(-\hat n, -\vec x + r\hat n, 1)\big) &\mapsto (\hat{n},r, -\ell\vec{x} + \ell r \hat n, \ell).
\end{align*}
By setting
$$(\vec{x},\vec\xi) = (\vec{x}, \ell\hat{n}), \qquad (\hat{n}, r, \vec{\eta},\eta_r) = (\hat{n},r, -\ell\vec{x} + \ell r \hat n, \ell),$$
we get equations
$$\vec\xi = \ell \hat{n}, \quad \vec\eta = -\ell \vec{x}+\ell r\hat{n},\quad \eta_r = \ell.$$
Combining the facts that $\hat{n}\cdot \hat{n} = 1$ and $\vec{x}\cdot \hat n = r$, we can solve everything in terms of $\vec{x}$ and $\vec\xi$:
$$\ell = \| \vec\xi \|, \; \hat n = \frac{\vec\xi}{\|\vec\xi\|},\;  r = \frac{\vec{x}\cdot \vec\xi}{\|\xi\|},\; \vec{\eta} = -\|\vec\xi\|\vec{x} + \frac{\vec\xi (\vec{x}\cdot \vec\xi)}{\|\vec\xi\|},\; \eta_r=\| \vec\xi \|.$$
Then the contact transform defined by $\Lambda$ is
$$p_2|_L\circ (p_1^a|_L)^{-1}: (\vec{x},\vec\xi) \mapsto \big(\,\frac{\vec\xi}{\|\vec\xi\|}\, ,\, \frac{ \vec{x}\cdot \vec\xi}{\|\vec\xi\|}\, ,\, -\|\vec\xi\|\vec{x} + \frac{\vec\xi (\vec{x}\cdot \vec\xi)}{\|\vec\xi\|} \, ,\, \|\vec\xi\|\,\big).$$
This is precisely $\chi$.

\medskip
$\bullet$ Quantization.
Next we verify that $k_S$ satisfies the three conditions in Definition \ref{QCT}.

\begin{enumerate}
\item
Since $S$ is closed, $k_S$ is constructible, and hence cohomologically constructible. (\cite{KS} Proposition 8.4.9 and Exercise 8.4.4).
\item
Let $\Omega_X = \dot{T}^*\bbR^n$ and $\Omega_Y =  T^{*,+}(S^{n-1}\times \bbR)$. Because $\Omega_X$ or $\Omega_Y$ do not contain points on the zero sections, neither do $\big( (p_1^a)^{-1}(\Omega_X)\cup p_2^{-1}(\Omega_Y)\big)$. By construction we have $L = \dot {SS}(k_S)$, and hence 
$$\big( (p_1^a)^{-1}(\Omega_X)\cup p_2^{-1}(\Omega_Y)\big)\cap SS(k_S) \subset L.$$
\item
$S$ is smooth manifold with a codimension $1$ boundary. By Lemma \ref{simplelemma}, $k_S$ is simple along $\dot{SS}(k_S)$. By the (second equivalent) definition of the simple sheave, it implies the canonical map
$$k_\Lambda\rightarrow \mu hom (k_S,k_S)|_\Lambda$$
is an isomorphism. 
\end{enumerate}
At this point, we have verified that the kernel of the Radon transform is a quantized contact transform from $(\bbR^n,\dot{T}^*\bbR^n)$ to $(S^{n-1}\times\bbR, T^{*,+}(S^{n-1}\times\bbR))$. Applying Theorem \ref{ContacttransformKS} we complete the proof.
\end{proof}

\section{Invariants}

By the work of Guillermou-Kashiwara-Schapira \cite{GKS}, a homogeneous Hamiltonian isotopy of the cotangent bundle with the zero section removed has a unique sheaf quantization. Therefore the subcategory of sheaves micro-supported along a conic Lagrangian is an invariant under a homogenous Hamiltonian isotopy, and the equivalence is given by the convolution of the quantized sheaf kernel.

A closed conic Lagrangian $\Lambda \in \dot{T}^*\bbR^n \cong T^{*,+}(S^{n-1}\times \bbR)$ admits two categorical invariants. We prove they are equivalent by the Radon transform, and apply the result to knot theory.

\subsection{GKS theorem}
Let $X$ be a smooth manifold and $I\subset \bbR$ an open interval containing the origin. A homogeneous Hamiltonian isotopy of $\dot{T}^*X$ is a smooth map $H: \dot{T}^*X\times I\rightarrow \dot{T}^*X$ such that $H_t:= H(-,t)$ satisties: (1) $H_0$ is the identity map, (2) $H_t$ is a symplectomorphism form each $t\in I$, (3) $H_t$ respects the dilation along the fiber. 

The GKS theorem \cite[Theorem 3.7]{GKS} states that each homogeneous Hamiltonian isotopy admits a unique sheaf quantization (up to a unique isomorphism), which is a locally bounded sheaf $K \in D(X\times X\times I)$ with: (1) the restriction to $X\times X\times \{0\}$ is isomorphic to the constant sheaf $k_\Delta$ on the diagonal $\Delta\subset X\times X$, (2) $\dot{SS}(K)$ is contained in a unique conic Lagrangian $L\in \dot{T}^*(X\times X\times I)$ \cite[Lamma, A.2]{GKS}, which has the property that  
$$L_t = \{(x,\xi), (x, -H(x, \xi, t))\,|\, (x,\xi)\in \dot{T}^*X, t\in I\}.$$

Convolution by the kernel $K$ restricted to any $t\in I$ is an autoequivalence of the category of locally bounded sheaves on $X$ \cite[Proposition 3.2]{GKS}. If in addition the Hamiltonian is the identity outside a ``horizontally compact region'' (compact after taking the quotient of the dilation) and the interval $I$ is relatively compact, then the category can be taken to be the bounded derived category (other than locally bounded) \cite[Remark 3.8]{GKS}.

The second property of the kernel indicates that the convolution deforms the singular support of sheaves according to the homogeneous Hamiltonian isotopy. Because a horizontally compact deformation of a closed conic Lagrangian extends to a homogeneous Hamiltonian isotopy in the ambient cotangent \cite[Proposition A.5]{GKS}, the convolution gives rise to an equivalence between the categories of sheaves microsupported along the conic Lagrangian before and after the deformation, stated as follows:

\begin{prop}\cite[Corollary 3.13]{GKS}
Suppose $\Lambda_0\subset \dot{T}^*X$ is a closed conic Lagrangian, $I$ is a relatively compact open interval containing the origin. Suppose $H: \Lambda_0\times I\rightarrow \dot{T}^*X$ is a deformation which is the identity outside a horizontally compact region. Let $H_t = H(-,t)$ and $\Lambda_t = H_t(\Lambda_0)$. There is an equivalence of categories
$$D^b_{\Lambda_0\cup 0_X}(X)\xrightarrow{\sim} D^b_{\Lambda_t\cup 0_X}(X).$$
\end{prop}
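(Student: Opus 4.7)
The plan is to promote the given horizontally compact deformation of $\Lambda_0$ to a globally defined homogeneous Hamiltonian isotopy of $\dot T^*X$, then quantize it via the GKS theorem, and finally show that convolution by the resulting kernel sends sheaves microsupported in $\Lambda_0 \cup 0_X$ to sheaves microsupported in $\Lambda_t \cup 0_X$.

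First I would invoke the extension result \cite[Proposition A.5]{GKS}: since the deformation $H:\Lambda_0\times I\to\dot T^*X$ is the identity outside a horizontally compact region of $\Lambda_0$, it extends to a homogeneous Hamiltonian isotopy $\widetilde H:\dot T^*X\times I\to\dot T^*X$ which is likewise the identity outside a horizontally compact region, and with $I$ relatively compact. The GKS theorem then produces a kernel $K\in D^b(X\times X\times I)$ satisfying $K|_{t=0}\cong k_\Delta$ and whose non-zero singular support is contained in the conic Lagrangian $L\subset\dot T^*(X\times X\times I)$ built from $\widetilde H$; the horizontal compactness allows us to take the bounded derived category rather than only the locally bounded one \cite[Remark 3.8]{GKS}.

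Next, for each $t\in I$ I set $K_t := K|_{X\times X\times\{t\}}$ and consider the convolution $\Phi_{K_t}:D^b(X)\to D^b(X)$. By \cite[Proposition 3.2]{GKS}, $\Phi_{K_t}$ is an autoequivalence of $D^b(X)$, with $\Phi_{K_0}\cong \mathrm{id}$ and inverse given by convolution with the kernel quantizing the reverse isotopy $\widetilde H_{-t}$ (or equivalently the kernel for $\widetilde H^{-1}$ at time $t$). The core point is then the following singular-support estimate: if $\cF\in D^b(X)$ has $SS(\cF)\subset \Lambda_0\cup 0_X$, then using Proposition \ref{SSfunctorial} and the description of $SS(K_t)\subset L_t\cup 0_{X\times X}$ with
\[
L_t=\{(x,\xi,x',-\xi')\mid (x',\xi')=\widetilde H_t(x,\xi),\ (x,\xi)\in\dot T^*X\},
\]
one obtains $SS(\Phi_{K_t}\cF)\subset \widetilde H_t(SS(\cF)\setminus 0_X)\cup 0_X\subset \Lambda_t\cup 0_X$. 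Applying the same estimate to the inverse kernel shows that $\Phi_{K_t}$ restricts to a functor $D^b_{\Lambda_0\cup 0_X}(X)\to D^b_{\Lambda_t\cup 0_X}(X)$ with quasi-inverse the analogous restriction of $\Phi_{K_t}^{-1}$, giving the desired equivalence.

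The main obstacle I expect is the bookkeeping in the singular support step: one must carefully compose the correspondence given by $L_t$ with $SS(\pi_1^{-1}\cF)$ and control the contribution of the zero section, which is why the statement adds $0_X$ to both sides. Handling this correctly requires checking that $\pi_2$ is proper on the support of $K_t\otimes\pi_1^{-1}\cF$ (guaranteed by the horizontal compactness of the isotopy) so that \eqref{SSfunctorial1} applies, and then verifying set-theoretically that $L_t\circ(SS(\cF)\cup 0_X)\subset\Lambda_t\cup 0_X$, where the zero-section piece only yields zero-section output because $\widetilde H_t$ preserves the complement of the zero section. Once this composition is written out, the equivalence follows directly from the fact that $\Phi_{K_t}\circ\Phi_{K_t}^{-1}$ and $\Phi_{K_t}^{-1}\circ\Phi_{K_t}$ are isomorphic to the identity on all of $D^b(X)$, hence a fortiori on the full subcategory with prescribed microsupport.
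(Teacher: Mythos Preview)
The paper does not give its own proof of this proposition: it is stated as a citation of \cite[Corollary 3.13]{GKS} and used as a black box, with the surrounding paragraphs merely recalling the ingredients (the GKS kernel, \cite[Proposition A.5]{GKS}, \cite[Remark 3.8]{GKS}) that go into it. So there is no in-paper argument to compare against.

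That said, your sketch is exactly the argument one finds in \cite{GKS}: extend the deformation to an ambient homogeneous Hamiltonian isotopy via \cite[Proposition A.5]{GKS}, quantize via the GKS theorem to get $K$, use \cite[Proposition 3.2]{GKS} for the autoequivalence, and control microsupport under convolution by the graph Lagrangian $L_t$. One small correction: the properness needed to apply \eqref{SSfunctorial1} is not ``$\pi_2$ proper on $\mathrm{supp}(K_t\otimes\pi_1^{-1}\cF)$'' in general (that would require compactness of $\mathrm{supp}\,\cF$), but rather that $K_t$ itself has support proper over each factor, which is part of the GKS construction (the kernel is obtained by composition with $k_\Delta$ and its support projects properly). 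With that adjustment, your microsupport bookkeeping and the conclusion are correct.
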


In other words, $D^b_{\Lambda_0\cup 0_X}(X)$ is an invariant under homogeneous Hamiltonian isotopies. When the context is clear, we simply write $D^b_{\Lambda_0}(X)$. Next we study how this category behaves under the Radon transform.

\subsection{Comparing invariants}\label{I2}
Let $X$ be a smooth manifold, $\Omega \subset T^*X$ a conic open subset and $\Lambda \subset \Omega$ a conic closed Lagrangian submanifold. Let $D^b_\Lambda(X,\Omega)$ be the full subcategory of $D^b(X,\Omega)$ consisting of sheaves $\cF$ with $\dot{SS}(\cF) \subset \Lambda$. Also recall $\chi: \dot{T}^*\bbR^n\rightarrow T^{*,+}(S^{n-1}\times \bbR)$ from (\ref{Chiformula}), the contact transform induced from the Radon transform $\Phi$ .

\begin{prop}\label{RadonSh}
Suppose $\Lambda \subset \dot{T}^*\bbR^n$ is a closed conic Lagrangian submanifold. Let $\Lambda' = \chi(\Lambda)$. The Radon transform $\Phi$ induces an equivalence of categories:
$$D^b_{\Lambda}(\bbR^n,\dot{T}^*\bbR^n)\xrightarrow{\sim} D^b_{{\Lambda'}}(S^{n-1}\times \bbR, T^{*,+}(S^{n-1}\times \bbR)).$$
\end{prop}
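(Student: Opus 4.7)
The plan is to bootstrap from Theorem \ref{RTn}, which already establishes that $\Phi$ is an equivalence on the full localized categories $D^b(\bbR^n,\dot T^*\bbR^n)$ and $D^b(S^{n-1}\times\bbR, T^{*,+}(S^{n-1}\times\bbR))$. What remains is purely a singular-support bookkeeping matter: verify that this equivalence sends the subcategory of sheaves microsupported in $\Lambda$ to the subcategory of sheaves microsupported in $\Lambda'=\chi(\Lambda)$, and conversely.

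First I would recall that in the kernel $K=k_{\{\vec x\cdot\hat n\le r\}}$ of the Radon transform, the quantized contact transform axioms are satisfied, so Theorem \ref{ContacttransformKS}(2) applies. Setting $\cF_1=\cF_2=\cF$ one obtains the key microlocal compatibility
$$\chi_*\,\mu hom(\cF,\cF)\;\cong\;\mu hom\bigl(\Phi(\cF),\Phi(\cF)\bigr)$$
as objects in $D^b\bigl(T^{*,+}(S^{n-1}\times\bbR)\bigr)$. Since $\mathrm{supp}\,\mu hom(\cF,\cF)=SS(\cF)$, taking supports gives
$$\chi\bigl(SS(\cF)\cap \dot T^*\bbR^n\bigr)\;=\;SS(\Phi(\cF))\cap T^{*,+}(S^{n-1}\times\bbR).$$
Now the membership of an object in $D^b_{\Lambda}(\bbR^n,\dot T^*\bbR^n)$ is, by definition, the condition $\dot{SS}(\cF)\subset\Lambda$ read inside the localized category, and similarly on the target side with $T^{*,+}$ in place of $\dot T^*$. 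So if $\dot{SS}(\cF)\subset\Lambda$, then the above identity forces $SS(\Phi(\cF))\cap T^{*,+}(S^{n-1}\times\bbR)\subset\chi(\Lambda)=\Lambda'$, i.e.\ $\Phi(\cF)\in D^b_{\Lambda'}(S^{n-1}\times\bbR, T^{*,+}(S^{n-1}\times\bbR))$. This shows that $\Phi$ restricts to a functor between the Lagrangian subcategories.

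For the reverse inclusion I would run the same argument with the right adjoint $\Psi$, which by Theorem \ref{ContacttransformKS} is the inverse of $\Phi$ on the localized categories and whose induced contact transform is $\chi^{-1}$. The analogous $\mu hom$-compatibility gives
$$\chi^{-1}\bigl(SS(\cG)\cap T^{*,+}(S^{n-1}\times\bbR)\bigr)\;=\;SS(\Psi(\cG))\cap \dot T^*\bbR^n,$$
so $\Psi$ sends $D^b_{\Lambda'}$ into $D^b_{\Lambda}$. Combined with the fact that $\Phi$ and $\Psi$ are already mutually inverse equivalences on the ambient localized categories, the restrictions to the subcategories are mutually inverse equivalences as well.

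The only real subtlety, and the step I would be most careful about, is ensuring that the singular-support condition defining $D^b_{\Lambda}(X,\Omega)$ as a subcategory of the \emph{localized} category $D^b(X,\Omega)$ is genuinely captured by the support of $\mu hom(\cF,\cF)$ restricted to $\Omega$. This is standard given that $\mu hom$ is defined on all of $T^*X$ and its support is $SS(\cF)$, but one has to check that the microlocal isomorphism of Theorem \ref{ContacttransformKS}(2) holds in the appropriate open subset, not merely on $L$ itself. Once that point is granted, the proof is essentially transport of structure under $\chi$.
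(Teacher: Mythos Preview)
Your proposal is correct and follows the same overall strategy as the paper: use Theorem \ref{RTn} for the ambient equivalence, then check that $\Phi$ and $\Psi$ respect the singular-support constraints so as to restrict to the $\Lambda$-subcategories. The only difference is in how the singular-support transport is justified. The paper invokes the functorial estimates for $SS$ under convolution (in the spirit of Proposition \ref{SSfunctorial}) to obtain $\dot{SS}(\Phi(\cF))\subset\chi(\dot{SS}(\cF))$ directly from the kernel, and then runs the same inclusion for $\Psi$ to get essential surjectivity. You instead use Theorem \ref{ContacttransformKS}(2), i.e.\ the isomorphism $\chi_*\mu hom(\cF,\cF)\cong\mu hom(\Phi(\cF),\Phi(\cF))$, and read off the \emph{equality} of supports. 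Your route is arguably cleaner in that it gives equality rather than inclusion in one stroke and bypasses the convolution $SS$-estimate (which, strictly speaking, is not the content of Proposition \ref{SSfunctorial} as stated); the paper's route is more hands-on and avoids invoking part (2) of the quantized-contact-transform theorem. The subtlety you flag about whether the $\mu hom$ isomorphism holds on the relevant open sets, not just on $L$, is exactly what is packaged into Theorem \ref{ContacttransformKS}(2) (it is an isomorphism in $D^b(\Omega_Y)$), so no extra work is needed there.
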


\begin{proof}
We show that any sheaf with $\dot{SS}(\cF)\subset \Lambda$ satisfies $\dot{SS}(\Phi(\cF))\subset \Lambda'$. Let $p_1$ (resp. $p_2$) be the projection from $\dot{T}^*\bbR^n\times \dot{T}^*(S^{n-1}\times \bbR)$ to $\dot{T}^*\bbR^n$ (resp. $\dot{T}^*(S^{n-1}\times \bbR)$). By the functorial properties of singular support,
$$\dot{SS}(\Phi(\cF)) = p_2(\dot{SS}(K)\cap p_1^{-1}(\dot{SS}(\cF))) = \chi(\dot{SS}(\cF)).$$
Here $K$ is the convolution kernel of the Radon transform, and the second equality is due to the fact that $K$ is a quantized contact transform inducing $\chi$. If $\dot{SS}(\cF)\subset \Lambda$, then $\dot{SS}(\Phi(\cF)) = \chi(\dot{SS}(\cF)) \subset \chi(\Lambda) =  \Lambda'$.

Recall $\Psi$ is the right adjoint functor of $\Phi$. For the essential subjectivity, suppose $\cG \in D^b(S^{n-1}\times \bbR)$ with $\dot{SS}(\cG)\subset \Lambda'$, then $\cF = \Psi(\cG)$ satisfies $\dot{SS}(\cF)\subset \Lambda$ and by adjunction $\Phi(\cF)\xrightarrow{\sim}\cG$. 
\end{proof}

The right hand side of the equivalene can be refined under suitable conditions. We state the refinement in a more general setting:

\begin{lem}\label{opendoesnotmatter}
Let $X$ be a manifold, and $\Lambda\subset T^{*,+} (X\times \bbR)$ a closed conic Lagrangian. Assume $\Lambda/\bbR_+$, the quotient by the dilation, is compact. There is an equivalence of categories,
$$D^b_\Lambda(X\times \bbR,\dot{T}^*(X\times \bbR)) \xrightarrow{\sim} D^b_{\Lambda}(X\times \bbR, T^{*,+}(X\times\bbR)).$$
\end{lem}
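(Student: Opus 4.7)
The plan is to show the natural quotient functor $\Phi: \calA\to\calB$, with $\calA := D^b_\Lambda(X\times \bbR,\dot T^*(X\times\bbR))$ and $\calB := D^b_\Lambda(X\times\bbR, T^{*,+}(X\times\bbR))$, is an equivalence. The functor factors as the inclusion $\calA\hookrightarrow D^b(X\times\bbR,\dot T^*(X\times\bbR))$ followed by the further localization at $D^b_{\{\eta_r\leq 0\}}(X\times\bbR)$; since $\Lambda\subset T^{*,+}$, the image lies in $\calB$, so $\Phi$ is well-defined.

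For fully faithfulness on direct morphisms, for any $\cF,\cG\in\calA$ we have $SS(\cF),SS(\cG)\subset \Lambda\cup 0_{X\times\bbR}$, and the triangle inequality \eqref{triangleinequality} gives $SS(Cone(f))\subset \Lambda\cup 0_{X\times\bbR}$ for any $f:\cF\to\cG$. Since $\Lambda\subset T^{*,+}=\{\eta_r>0\}$, a subset of $\Lambda\cup 0_{X\times\bbR}$ lies in $0_{X\times\bbR}$ if and only if its intersection with $T^{*,+}$ is empty; hence the cone is null in $\calA$ iff it is null in $\calB$. The reduction from direct morphisms to the calculus of roofs then follows from essential surjectivity applied to the intermediate objects.

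For essential surjectivity, given $\cG\in\calB$ with $SS(\cG)\cap T^{*,+}\subset\Lambda$, I would apply a Tamarkin-type projector along the $\bbR$-factor: let $s: X\times\bbR\times\bbR\to X\times\bbR$, $(x,t,t')\mapsto (x,t+t')$, and set $T\cG := Rs_!(\cG\boxtimes k_{[0,\infty)})$. A direct singular-support computation for the convolution yields $SS(T\cG)\subset\{\eta_r\geq 0\}$, and convolving the distinguished triangle $k_{(0,\infty)}\to k_{[0,\infty)}\to k_{\{0\}}\xrightarrow{+1}$ with $\cG$ produces a map $T\cG\to \cG$ whose cone $\cG\star k_{(0,\infty)}$ has $SS\cap T^{*,+}=\emptyset$; hence $T\cG\cong \cG$ in $\calB$. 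Combining the two estimates, $SS(T\cG)\subset \Lambda\cup\{\eta_r=0\}$.

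The remaining step, and main obstacle, is to kill the residual $\dot{SS}(T\cG)\cap\{\eta_r=0\}$ (i.e.\ the portion with $\xi\neq 0$), an issue that arises only when $\dim X>0$. Here the compactness of $\Lambda/\bbR_+$ enters essentially: it guarantees that $\Lambda$ is bounded away from $\{\eta_r=0\}$ in the cosphere bundle $S^*(X\times\bbR)$, so $\Lambda$ and $\{\eta_r=0\}\cap\dot T^*(X\times\bbR)$ are \emph{disjoint} closed conic subsets of $\dot T^*(X\times\bbR)$, separated by an open conic neighborhood. A Kashiwara--Schapira-style microlocal cut-off (cf.~\cite[Prop.~5.2.3]{KS}) then splits $T\cG$ into a distinguished triangle $\cF_\Lambda\to T\cG\to \cF_B\xrightarrow{+1}$ with $\dot{SS}(\cF_\Lambda)\subset\Lambda$ and $\dot{SS}(\cF_B)\subset\{\eta_r=0\}$. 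Since $\dot{SS}(\cF_B)\cap T^{*,+}=\emptyset$, $\cF_B$ vanishes in $\calB$ and $\cF_\Lambda\cong \cG$ in $\calB$ with $\cF_\Lambda\in\calA$, giving essential surjectivity. The delicate construction of this microlocal decomposition is where compactness truly enters, and is the hardest step.
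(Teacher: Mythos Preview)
Your overall strategy is close to the paper's: both apply Tamarkin's projector $\cP(\cF)=Rs_!(p_{12}^{-1}\cF\otimes p_3^{-1}k_{[0,\infty)})$ to push the singular support into $\{\eta_r\geq 0\}$, and both isolate the residual microsupport along $\{\eta_r=0\}\cap\dot T^*(X\times\bbR)$ as the obstruction to be removed via compactness of $\Lambda/\bbR_+$. The paper likewise applies the projector to the middle object of a roof in $\calB$ in order to lift it to a roof in $\calA$, which is essentially your ``essential surjectivity applied to the intermediate objects.'' The gap lies in how you carry out the final step.

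You propose to decompose $T\cG$ into a distinguished triangle $\cF_\Lambda\to T\cG\to \cF_B$ with $\dot{SS}(\cF_\Lambda)\subset\Lambda$ and $\dot{SS}(\cF_B)\subset\{\eta_r=0\}$, invoking a cut-off ``cf.\ \cite[Prop.~5.2.3]{KS}.'' That proposition is a local cut-off lemma for convex cones; it does not produce a \emph{global} distinguished triangle splitting a sheaf according to two disjoint closed conic pieces of its microsupport, and no general theorem does. The separation of $\Lambda$ from $\{\eta_r=0\}$ in the cosphere bundle, while necessary, is not by itself enough to manufacture such a triangle. You have correctly flagged this as the hardest step, but the mechanism you sketch does not go through.

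The paper sidesteps the splitting problem entirely. Rather than decompose $\cP(\cF)$ after the fact, it appeals to Tamarkin's microlocal cut-off lemma \cite[Lemma~3.7]{Ta}: compactness of the projection of $\Lambda$ to $T^*X$ forces $SS(\cP(\cF))\cap T^{*,\leq 0}(X\times\bbR)\subset 0_{X\times\bbR}$ directly, so that $\dot{SS}(\cP(\cF))\subset\Lambda$ with no further decomposition needed. Replacing your splitting step with an appeal to this lemma repairs the argument; the remainder of your outline then coincides with the paper's proof.
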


\begin{proof}\footnote{We thank St\'ephane Guillermou for correcting the proof.}
The inclusion $T^{*,+}(X\times \bbR)\subset \dot{T}^*(X\times \bbR)$ induces a quotient functor
\begin{equation}\label{quotientvsprojector}
D^b_\Lambda(X\times \bbR,\dot{T}^*(X\times \bbR)) \xrightarrow{} D^b_{\Lambda}(X\times \bbR, T^{*,+}(X\times\bbR)).
\end{equation}

Two sheaves $\cF_1, \cF_2$ are isomorphic in $D^b_{\Lambda}(X\times \bbR, T^{*,+}(X\times\bbR))$ if there is a roof
\begin{equation}\label{roof}
\begin{split}
\begin{tikzpicture}  
  \node (A){$\cF$};
  \node (C)[below of=A,node distance=1cm] {};
  \node (D)[left of=C,node distance=1.5cm] {$\cF_1$};
  \node (F)[right of=C,node distance=1.5cm]{$\cF_2$};
  \draw[->] (A) to node [swap] {$f_1$} (D);
  \draw[->] (A) to node []{$f_2$}(F);
\end{tikzpicture}
\end{split}
\end{equation}
such that (1) $\dot{SS}(\cF) \subset \Lambda\cup (\dot{T}^*(X\times \bbR)\setminus T^{*,+}(X\times\bbR))$, (2) each of $f_1,f_2$ is an isomorphism in $D^b(X\times \bbR, T^{*,+}(X\times\bbR))$. 

We first modify $\cF$ by Tamarkin's projector functor $\cP$ \cite[Proposition 2.1]{Ta}. Recall the construction.  Let $p_{12},p_3$ be the projections from $X\times \bbR\times \bbR$ to the first two coordinates and the third coordinate, and let $s: X\times \bbR\times \bbR \mapsto X\times \bbR$ by $s(x, t_1,t_2) = (x,t_1+t_2)$. Let $k_{[0,\infty)}\in D^b(\bbR)$ be the constant sheaf supported on $[0,\infty)$, then the projector is defined by $$\cP(\cF) = Rs_!(p_{12}^{-1}\cF\otimes p_3^{-1}k_{[0,\infty)}).$$
The projector has the following properties: (1) there is a canonical morphism $\cP(\cF)\rightarrow \cF$, which is an isomorphism in $D^b(X\times \bbR, T^{*,+}(X\times \bbR))$, (2) $\dot{SS}(\cP(\cF)) \cap T^{*,-}(X\times\bbR) = \emptyset$.

The second property of the projector shows $\dot{SS}(\cP(\cF))$ is excluded from $T^{*,-}(X\times\bbR)$, and we need to further argue that it does not intersect $\dot{T}^*X\times 0_{\bbR}= \dot{T}^*X \times\bbR$. This is achieve by the following microlocal cut-off lemma. Let $\pi: T^*(X\times \bbR)\rightarrow T^*X$ be the projection. Because $\Lambda/\bbR_+$ is compact, so is $\pi(\Lambda)$ in $T^*X$. It satisfying the hypothesis of \cite[Lemma 3.7]{Ta}, the lemma yields the singular support of the projected sheaf satisfies
$$SS(\cP(\cF))\cap {T}^{*,\leq 0}(X\times \bbR) \subset 0_{X\times \bbR},$$
where ${T}^{*,\leq 0}(X\times \bbR) = T^*(X\times\bbR)\setminus T^{*,+}(X\times \bbR)$. Hence $\dot{SS}(\cP(\cF)) \subset \Lambda$.

Suppose $f: \cF'\rightarrow \cF''$ is a morphism between two sheaves micro-supported along $\Lambda$, the triangle inequality (\ref{triangleinequality}) implies the cone of the morphism is also micro-supported along $\Lambda$:
$$\dot{SS}(Cone(f)) \subset \dot{SS}(\cF') \cup \dot{SS}(\cF'') \subset \Lambda.$$
Therefore $f$ is an isomorphism in $D^b(X\times \bbR,\dot{T}^*(X\times \bbR))$ (meaning $\dot{SS}(Cone(f))\cap \dot{T}^*(X\times \bbR) =\emptyset$) if and only if it is an isomorphism in $D^b(X\times \bbR,T^{*,+}(X\times\bbR))$ (meaning $\dot{SS}(Cone(f))\cap T^{*,+}(X\times \bbR) =\emptyset$).

Apply the projector to the roof (\ref{roof}). By the previous argument, each of $\cP(f_1),\cP(f_2)$ is an isomorphism in $D^b(X\times \bbR,\dot{T}^*(X\times \bbR))$ if and only if it is an isomorphism in $D^b(X\times \bbR,T^{*,+}(X\times\bbR))$. Therefore the quotient functor (\ref{quotientvsprojector}) is an equivalence. We complete the proof.  
\end{proof}

A sheaf $\cF\in D^b(X)$ is a \textit{local system} (or locally constant sheaves) if for any point on $X$ there is an open neighborhood $U$ such that the restriction $\cF|_U$ is a constant sheaf. Let $Loc(X)\subset D^b(X)$ be the subcategory of local systems on $X$. In this context, a local system is more than just a $\pi_1(X)$ representation. See the following example.

\begin{eg}\label{Hopffibration}
Let $\pi: S^3\rightarrow S^2$ be the Hopf fibration, which is a non-trivial $S^1$ fiber bundle over $S^2$ where any two fibers are linked. 

Consider $\cF = R\pi_*k_{S^3}\in D^b(S^2)$. By the functorial property of singular supports (\ref{SSfunctorial1}), $SS(\cF)\subset S^2$ and $\cF$ is a local system in $Loc(S^2)$.

At each point $p \in S^2$, the stalk $\cF_p$ is isomorphic to $\oplus H^i(\pi^{-1}(p), k) \cong k \oplus k[-1]$. However, $\cF$ and $k_{S^2}\oplus k_{S^2}[-1]$ are not isomorphic. In fact, $\cF$ can be constructed from a non-trivial extension class
$$0\rightarrow k_{S^2}\rightarrow \cF \rightarrow k_{S^2}[-1]\rightarrow 0$$
in $Ext^1(k_{S^2}[-1],k_{S^2})$.
\end{eg}

A sheaf $\cF \in D^b_{dg}(X)$ is a local system if and only if $SS(\cF)\subset 0_X$. The localization with respect to $\dot{T}^*X$ is by definition the quotient by sheaves microsupported in the $0_X$, namely the quotient of local systems:
\begin{equation}\label{quolocalsys}
D^b(X, \dot{T}^*X) \cong D^b(X)/Loc(X).
\end{equation}

\begin{lem}\label{Locretracts}
Let $X$ be smooth manifold. Then $Loc(X\times \bbR^n) \cong Loc(X)$.
\end{lem}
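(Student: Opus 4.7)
The plan is to exhibit the pullback along the projection $p: X \times \bbR^n \to X$ and the restriction along the inclusion $i: X \hookrightarrow X \times \bbR^n$, $x \mapsto (x, 0)$, as mutually quasi-inverse equivalences. First I would check that both functors preserve local systems: by Proposition \ref{SSfunctorial}, singular support is transported via the bundle maps $f_d$ and $f_\pi$, and for $f = p$ or $f = i$ the zero sections map into zero sections, so $p^{-1}$ and $i^{-1}$ restrict to functors between $Loc(X)$ and $Loc(X \times \bbR^n)$. The relation $i^{-1} \circ p^{-1} = \mathrm{id}_{Loc(X)}$ follows at once from $p \circ i = \mathrm{id}_X$, so it remains to produce a natural isomorphism $p^{-1} i^{-1} \cF \cong \cF$ for every $\cF \in Loc(X \times \bbR^n)$.

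For this last step, let $r = i \circ p: (x, v) \mapsto (x, 0)$, so the goal becomes $r^{-1}\cF \cong \cF$. I would argue by homotopy invariance using the linear contraction $H: X \times \bbR^n \times [0, 1] \to X \times \bbR^n$, $H(x, v, t) = (x, tv)$, which connects $r$ at $t = 0$ to $\mathrm{id}$ at $t = 1$. Since $SS(\cF) \subset 0_{X \times \bbR^n}$, the pullback $H^{-1}\cF$ is again micro-supported in the zero section on $X \times \bbR^n \times [0, 1]$; in particular the $t$-covector $dt$ never lies in $SS(H^{-1}\cF)$. Kashiwara's non-characteristic deformation lemma (\cite{KS}, \S 2.7) applied to the family of sublevel sets $\{t \leq s\}$ then identifies the restrictions $H^{-1}\cF|_{t = 0} = r^{-1}\cF$ and $H^{-1}\cF|_{t = 1} = \cF$ canonically.

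The main obstacle is converting the non-characteristic deformation statement, which natively yields isomorphisms of sections over expanding open sets, into a functorial sheaf isomorphism on $X \times \bbR^n$. I would handle this either by patching on a cover by small open neighbourhoods of slices $\{t = s\}$ and invoking that the parallel transport along each contractible fiber $\{x\} \times \bbR^n$ is canonical, or more cleanly by induction on $n$: the step $n = 1$ reduces to showing $Loc(X \times \bbR) \cong Loc(X)$, which can be established using the Tamarkin projector $\cP$ together with a microlocal cut-off argument exactly in the spirit of Lemma \ref{opendoesnotmatter}. Once $p^{-1} i^{-1} \cong \mathrm{id}$ is secured, the pair $(p^{-1}, i^{-1})$ is a quasi-inverse pair and the claimed equivalence $Loc(X \times \bbR^n) \cong Loc(X)$ follows.
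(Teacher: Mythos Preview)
Your approach via the pair $(p^{-1}, i^{-1})$ differs from the paper's, which instead uses the adjoint pair $(\pi^{-1}, R\pi_*)$ for the projection $\pi: X \times \bbR^n \to X$. The advantage of the paper's choice is precisely the point you flag as an obstacle: because $R\pi_*$ is right adjoint to $\pi^{-1}$, the counit $\pi^{-1}R\pi_*\cF \to \cF$ is a \emph{given} natural transformation, and one only needs to check it is an isomorphism. The paper does this stalkwise: restricting to a point $x\in X$ reduces to the case where $X$ is a point, and a local system on $\bbR^n$ is a constant sheaf, so the claim is immediate. No homotopy argument, no non-characteristic deformation, no Tamarkin projector.

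Your route can be made to work, but the two fixes you propose for the naturality obstacle are either vague (patching parallel transports over a cover) or disproportionately heavy (the Tamarkin projector and the cut-off lemma are far more intricate than the statement warrants, and invoking them for the base case $n=1$ does not simplify anything --- it is the same problem). The cleanest repair of your argument is to observe that on local systems $i^{-1}$ and $R\pi_*$ agree canonically: the stalk of $R\pi_*\cF$ at $x$ is $R\Gamma(\{x\}\times\bbR^n, \cF)$, which for a local system is the stalk at $(x,0)$. At that point you have recovered the paper's proof.
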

\begin{proof}
Let $\pi: X\times \bbR^n\rightarrow X$ be the projection. The functoriality of singular support indicates $(\pi^{-1}, R\pi_*)$ is a pair of adjoint functors between the categories of local systems. The adjunction yields a natural morphism $\pi^{-1}R\pi_*\cF\rightarrow \cF$ for $\cF\in D^b(X\times\bbR)$. Suppose $X$ is a point, then a locally constant sheaf on $\bbR^n$ is isomorphic to a constant sheaf. Therefore the natural morphism is an isomorphism, and $Loc(\bbR^n)\cong Loc(pt)$. In the general case, we take the restriction of the morphism at an each point $x\in X$, then it reduces to the case of $X$ being a point and hence is an isomorphism.
\end{proof}

\begin{thm}\label{Equivuptoloc}
Let $\Lambda\subset \dot{T}^*\bbR^n$ be a closed conic Lagrangian such that the quotient $\Lambda/\bbR_+$ is compact. Let $\chi : \dot{T}^*\bbR^n\rightarrow T^{*,+}(S^{n-1}\times \bbR)$ (see (\ref{Chiformula})) be the contact transform induced by the Radon transform, and let $\Lambda' = \chi(\Lambda)$.  There is an equivalence of categories
$$D^b_{\Lambda}(\bbR^n,\dot{T}^*\bbR^n)\cong D^b_{\Lambda'}(S^n\times \bbR,\dot{T}^*(S^n\times \bbR)).$$
It further implies
$$D^b_{\Lambda}(\bbR^n)/ Loc(pt) \cong D^b_{\Lambda'}(S^{n-1}\times \bbR)/ Loc(S^{n-1}).$$
\end{thm}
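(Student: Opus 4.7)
The plan is to assemble the two displayed isomorphisms from equivalences already proved, namely Proposition \ref{RadonSh}, Lemma \ref{opendoesnotmatter}, the identification (\ref{quolocalsys}) of a microlocal localization with the quotient by local systems, and Lemma \ref{Locretracts}. No new microlocal input is needed; the work is purely combinatorial bookkeeping of categorical quotients.

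For the first displayed isomorphism I would start from Proposition \ref{RadonSh}, which yields an equivalence $\Phi:D^b_\Lambda(\bbR^n,\dot{T}^*\bbR^n)\xrightarrow{\sim}D^b_{\Lambda'}(S^{n-1}\times\bbR,\,T^{*,+}(S^{n-1}\times\bbR))$. The contact transform $\chi$ commutes with the $\bbR_+$-dilation (as verified in the proof of Theorem \ref{RTn}), so it descends to a homeomorphism $\Lambda/\bbR_+\cong\Lambda'/\bbR_+$, and the compactness hypothesis on $\Lambda/\bbR_+$ transfers to $\Lambda'/\bbR_+$. This is exactly what is required to apply Lemma \ref{opendoesnotmatter} with $X=S^{n-1}$, giving $D^b_{\Lambda'}(S^{n-1}\times\bbR,\,T^{*,+}(S^{n-1}\times\bbR))\cong D^b_{\Lambda'}(S^{n-1}\times\bbR,\,\dot{T}^*(S^{n-1}\times\bbR))$. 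Composing these two equivalences produces the first displayed isomorphism.

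For the second displayed isomorphism I would invoke (\ref{quolocalsys}) inside the subcategory $D^b_{\Lambda\cup 0_{\bbR^n}}(\bbR^n)$ (the expanded meaning of the shorthand $D^b_\Lambda(\bbR^n)$ used in Section 4.1) to get $D^b_\Lambda(\bbR^n,\dot{T}^*\bbR^n)\cong D^b_\Lambda(\bbR^n)/Loc(\bbR^n)$, and analogously $D^b_{\Lambda'}(S^{n-1}\times\bbR,\dot{T}^*(S^{n-1}\times\bbR))\cong D^b_{\Lambda'}(S^{n-1}\times\bbR)/Loc(S^{n-1}\times\bbR)$. Lemma \ref{Locretracts} applied once with $X=pt$ and factor $\bbR^n$, and once with $X=S^{n-1}$ and factor $\bbR$, then identifies $Loc(\bbR^n)\cong Loc(pt)$ and $Loc(S^{n-1}\times\bbR)\cong Loc(S^{n-1})$. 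Chaining these rewrites with the first equivalence yields $D^b_\Lambda(\bbR^n)/Loc(pt)\cong D^b_{\Lambda'}(S^{n-1}\times\bbR)/Loc(S^{n-1})$.

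I do not expect any substantial obstacle; the only step that is not a formal rewrite is the verification that $\Lambda'/\bbR_+$ is compact, which unlocks Lemma \ref{opendoesnotmatter} and is immediate from the explicit $\bbR_+$-equivariance of $\chi$ recorded in the proof of Theorem \ref{RTn}.
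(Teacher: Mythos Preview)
Your proposal is correct and follows essentially the same route as the paper: combine Proposition \ref{RadonSh} with Lemma \ref{opendoesnotmatter} for the first equivalence, then invoke (\ref{quolocalsys}) and Lemma \ref{Locretracts} for the second. Your explicit verification that $\Lambda'/\bbR_+$ is compact via the $\bbR_+$-equivariance of $\chi$ is a detail the paper leaves implicit but which is indeed needed to apply Lemma \ref{opendoesnotmatter}.
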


\begin{proof}
The first assertion follows the equivalences in Proposition \ref{RadonSh} and in Lemma \ref{opendoesnotmatter}.

Then we apply equation (\ref{quolocalsys}), the equivalence can be rewritten as
$$D^b_{\Lambda}(\bbR^n)/Loc(\bbR^3) \cong D^b_{\Lambda'}(S^n\times \bbR,\dot{T}^*(S^n\times \bbR))/ Loc(S^2\times \bbR).$$

Further by Lemma \ref{Locretracts} we conclude the second assertion.
\end{proof}
\begin{rmk}
Objects in $Loc(pt)$ are differential graded chain complexes of vector spaces. Moreover, each chain complex is quasi-isomorphic to its homology (with zero differentials).

Objects in $Loc(S^{n-1})$ are more complicated. For example, Guillermou \cite[Section 3]{Gu2} gives a complete classification of local systems on $S^2$. A Local system on $S^2$ is the direct sum of iterated cones of the constant sheaf on $S^2$ with shifted degrees. The first iteration gives the Hopf local system $\cF$ in Example \ref{Hopffibration}, and the next iteration is a cone between $\cF$ and a constant sheaf. The existence of the nontrivial local system $\cF$ comes from the fact that the second singular cohomology group of the sphere is non zero
$$Ext^1(k_{S^2}[-1],k_{S^2}) = Ext^2(k_{S^2},k_{S^2}) = H^2(S^2,k) = k.$$
\end{rmk}

\subsection{Legendrian knot conormal}\label{I3}

The cosphere bundle of a smooth manifold inherits a canonical contact structure. Legendrians in this contact space can be assigned with sheaf categories as invariants. One obtains a knot invariant from its Legendrian conormal bundle under this formalism. Topologists study the knot conormal in two different ambient spaces. The associated sheaf categories are related by the Radon transform.

The cotangent bundle $T^*X$ with the canonical one form is an exact symplectic manifold. Equip $X$ with any Riemannian metric, the canonical one-form restricts to a contact form on the hypersurface of covectors of any radius. We denote this cosphere bundle by $T^\infty X$. Each point in $T^\infty X$ corresponds to a unique $\bbR_+$-orbit in $\dot{T}^*X$ which is independent from the choice of the radius or the metric. Suppose $\Lambda \in T^\infty X$ is a Legendrian, associate to it a category
$$D^b_\Lambda(X):= D^b_{\bbR_+\Lambda\cup 0_X}(X),$$
here $\bbR_+\Lambda\subset \dot{T}^*X$ is the cone over $\Lambda$. A contact Hamiltonian isotopy of $T^\infty X$ lifts to a homogeneous Hamiltonian isotopy of $\dot{T}^*X$, and each Legendrian is coned to a conic Lagrangian (for this reason we use the same letter $\Lambda$ for both the Legendrian and the conic Lagrangian). Following the GKS theorem, the $D^b_\Lambda(X)$ is an Legendrian isotopy invariant. 

This framework allows us to define knot invariants using sheaves. Suppose $K\subset \bbR^3$ is a knot with possibly more than one component. Define the \textit{Legendrian knot conormal} $\Lambda_K$ to be the intersection of the knot conormal and the cosphere bundle:
$$\Lambda_K := T^*_K\bbR^3\cap T^\infty \bbR^3.$$
An isotopy of $K$ induces a Legendrian isotopy of $\Lambda_K$, and hence a Legendrian isotopy invariant of $\Lambda_K$ is in return a knot invariant. The category of sheaves micro-supported along $\Lambda_K$ is hence a knot invariant.

It is customary to consider the Legendrian knot conormal sitting inside a one-jet space for the purpose to study its Chekanov-Eliashberg dga (differential graded algebra) \cite{EENS1}. A one jet space $J^1Y \cong T^*Y\times \bbR$ over a smooth manifold $Y$ has a canonical structure $\alpha = dz-\beta$, where $z$ is coordinate on $\bbR$ and $\beta$ is the canonical one-form on $T^*Y$. Fixing the standard flat metric $\la -,-\ra$ on $\bbR^3$, and taking $T^\infty\bbR^3$ to be the cosphere bundle of unit length covectors, we have a strict contactomorphism (diffeomorphism preserving the contact form):
\begin{align*}
\varphi: T^\infty \bbR^3 &\rightarrow J^1S^2, \\
	(q,p)			&\mapsto (p, q-p\la q,p\ra, \la q, p\ra).
\end{align*}
This is the map used in \cite{EENS1}. We remark the map generalizes to other dimensions.

In general a one-jet bundle embeds into the cosphere bundle as an open submanifold:
\begin{align*}
\iota:\quad J^1Y &\rightarrow T^{\infty,+}(Y\times \bbR), \\
	(q,p,z) &\mapsto (q,z,-p,1).
\end{align*}
Here $ T^{\infty,+}(X\times \bbR)\subset T^{\infty}(X\times \bbR)$ consists of covectors whose last coordinate is positive. This notion is well defined for any choice of $T^\infty(X\times \bbR)$. Composing two maps, we get
\begin{equation}\label{consphereembedding}
\iota\circ \varphi: T^\infty \bbR^3\xrightarrow{\sim} T^{\infty,+}(S^2\times \bbR)\subset T^{\infty}(S^2\times \bbR).
\end{equation}

The ambient contact manifold where the Legendrian knot conormal lives admits two different base manifolds. Each base manifold gives rise to a sheaf category as a Legendrian invariant. We will show that these two categories are related by the Radon transform.

We first verify the geometric setup of \cite{EENS1} is the same with that of the Radon transform.

\begin{lem}\label{knotconormalunderRadon}
Let $\iota\circ \varphi: T^\infty \bbR^3\rightarrow{} T^{\infty,+}(S^2\times \bbR)$ be as defined in (\ref{consphereembedding}) and let $\chi : \dot{T}^*\bbR^3\rightarrow T^{*,+}(S^{2}\times \bbR)$  (see (\ref{Chiformula})) be the contact transform induced by the Radon transform. The two maps are compatible:
$$\chi|_{T^\infty \bbR^3} = \iota\circ \varphi.$$
\end{lem}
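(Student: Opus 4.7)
The plan is to verify the identity by direct coordinate calculation, since all three maps have explicit formulas given in the text. The only subtlety is that $\chi$ is defined on $\dot{T}^*\bbR^3$ while $\varphi$ is defined on $T^\infty\bbR^3$, so I first have to realize the cosphere bundle as a subset of $\dot{T}^*\bbR^3$. Using the standard Euclidean metric, I identify $T^\infty \bbR^3$ with the unit covector set $\{(q,p)\in \dot T^*\bbR^3 \mid \|p\|=1\}$, so that restricting $\chi$ to $T^\infty\bbR^3$ amounts to substituting $\|\vec\xi\|=1$ in formula (\ref{Chiformula}).

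Doing this substitution, the image of $(q,p)\in T^\infty\bbR^3$ under $\chi$ becomes
\[
\chi(q,p)=\bigl(p,\, \langle q,p\rangle,\, -q+p\langle q,p\rangle,\, 1\bigr)\in T^{*,+}(S^2\times\bbR),
\]
where I have used $\vec x = q$, $\vec\xi=p$ and $\|p\|=1$. On the other side, I apply the maps in sequence: first $\varphi(q,p)=(p,\, q-p\langle q,p\rangle,\, \langle q,p\rangle)\in J^1S^2$, and then $\iota$ sends $(q',p',z)\mapsto (q',z,-p',1)$, giving
\[
(\iota\circ\varphi)(q,p)=\bigl(p,\, \langle q,p\rangle,\, -(q-p\langle q,p\rangle),\, 1\bigr)=\bigl(p,\, \langle q,p\rangle,\, -q+p\langle q,p\rangle,\, 1\bigr).
\]
Comparing the two expressions component-by-component yields the identity $\chi|_{T^\infty\bbR^3}=\iota\circ\varphi$.

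I should briefly double-check that the image really lies in $T^{*,+}(S^2\times\bbR)$: the last coordinate is $1>0$, and the $S^2$-component of the covector is $-q+p\langle q,p\rangle$, which is orthogonal to $p$ (since $\langle -q+p\langle q,p\rangle,\,p\rangle=-\langle q,p\rangle+\langle q,p\rangle\|p\|^2=0$), so it indeed lies in $T^*_pS^2\subset T^*\bbR^3|_{S^2}$ under the convention fixed in the proof of Theorem \ref{RTn}. The main obstacle is essentially bookkeeping — keeping track of the sign convention in $\iota$ (the minus sign on $p$) and the fact that $q-p\langle q,p\rangle$ is the orthogonal projection of $q$ onto $T_pS^2$ — but once these conventions are aligned, the computation is immediate and no deeper argument is needed.
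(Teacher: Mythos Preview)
Your proof is correct and follows essentially the same approach as the paper: both restrict $\chi$ to the unit cosphere bundle by setting $\|\vec\xi\|=1$, compute each side explicitly, and compare coordinates. Your additional verification that the image lands in $T^{*,+}(S^2\times\bbR)$ is a nice sanity check not present in the paper's version.
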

\begin{proof}
The restriction of $\chi$ to $T^\infty\bbR^3$ is to take the covectors of unit length. It is straight forward to verify, that
$$\chi|_{T^\infty\bbR^3}(q,p) =  (p, \la q, p\ra, -q + p\la q,p\ra, 1).$$
and that
$$\iota\circ \varphi (q,p) = \iota (p, q-p\la q,p\ra, \la q, p\ra) = (p, \la q, p\ra, -q + p\la q,p\ra, 1).$$
\end{proof}

Next we apply the theorem from the last section for the following result.
\begin{prop}\label{RTapplytoknotconormal}
Let $K\subset \bbR^3$ be a knot or a link. Let $\Lambda_K\subset T^\infty \bbR^3$ be the Legendrian knot conormal and let $\Lambda_K' = \iota\circ \varphi (\Lambda)\subset T^{\infty}(S^2\times \bbR)$ (the map is defined in (\ref{consphereembedding})). There is an equivalence of categories
$$D^b_{\Lambda_K}(\bbR^3)/ Loc(pt) \cong D^b_{\Lambda_K'}(S^2\times \bbR)/ Loc(S^2).$$
\end{prop}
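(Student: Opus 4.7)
The plan is to deduce this as a direct consequence of Theorem \ref{Equivuptoloc} applied with $n=3$ to the conic Lagrangian $\bbR_+\Lambda_K\subset \dot{T}^*\bbR^3$ obtained by coning the Legendrian conormal. Unpacking the convention $D^b_{\Lambda_K}(\bbR^3):=D^b_{\bbR_+\Lambda_K\cup 0_{\bbR^3}}(\bbR^3)$ introduced in Section \ref{I3}, the equivalence to establish is exactly the one furnished by the theorem, provided the two hypotheses can be verified and the transformed Lagrangian can be identified.

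First, I would check the compactness hypothesis. Since $K\subset \bbR^3$ is a compact knot or link, the conormal intersection $\Lambda_K=T^*_K\bbR^3\cap T^\infty\bbR^3$ is a disjoint union of $2$-tori (one per component of $K$), hence compact. The associated conic Lagrangian $\bbR_+\Lambda_K\subset \dot{T}^*\bbR^3$ is closed, and its quotient by the dilation action is canonically identified with $\Lambda_K$, which is compact. Thus the hypothesis of Theorem \ref{Equivuptoloc} is satisfied.

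Second, I would identify the image of $\bbR_+\Lambda_K$ under the contact transform $\chi$ with $\bbR_+\Lambda_K'$. The map $\chi$ of (\ref{Chiformula}) is $\bbR_+$-equivariant (this was already observed in the proof of Theorem \ref{RTn}), so $\chi(\bbR_+\Lambda_K)=\bbR_+\chi(\Lambda_K)$. By Lemma \ref{knotconormalunderRadon}, the restriction $\chi|_{T^\infty\bbR^3}$ agrees with $\iota\circ\varphi$, so $\chi(\Lambda_K)=\iota\circ\varphi(\Lambda_K)=\Lambda_K'$. Combining, $\chi(\bbR_+\Lambda_K)=\bbR_+\Lambda_K'$, matching the Lagrangian that defines $D^b_{\Lambda_K'}(S^2\times \bbR)$.

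With the hypotheses verified and the target Lagrangian identified, Theorem \ref{Equivuptoloc} applies verbatim and yields the claimed equivalence $D^b_{\Lambda_K}(\bbR^3)/Loc(pt)\cong D^b_{\Lambda_K'}(S^2\times\bbR)/Loc(S^2)$. I do not foresee a serious obstacle here since the substantial content (the Radon equivalence of Theorem \ref{RTn}, the passage from $\dot{T}^*$ to $T^{*,+}$ in Lemma \ref{opendoesnotmatter}, the retraction $Loc(S^2\times\bbR)\cong Loc(S^2)$ in Lemma \ref{Locretracts}, and the matching of geometric models in Lemma \ref{knotconormalunderRadon}) has already been assembled. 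The only care needed is bookkeeping: making sure the coning convention used to define $D^b_{\Lambda_K}(\bbR^3)$ is consistent with the $\bbR_+$-equivariance of $\chi$, which is immediate.
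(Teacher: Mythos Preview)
Your proposal is correct and follows essentially the same route as the paper: verify that $\Lambda_K$ (a union of tori) is compact so that the conic Lagrangian is horizontally compact, then invoke the second statement of Theorem \ref{Equivuptoloc}. The paper's proof is terser and leaves the identification $\chi(\bbR_+\Lambda_K)=\bbR_+\Lambda_K'$ implicit, whereas you spell it out via the $\bbR_+$-equivariance of $\chi$ and Lemma \ref{knotconormalunderRadon}; this extra bookkeeping is harmless and arguably clarifying.
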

\begin{proof}
The Legendrian knot conormal is topologically a torus and hence compact. In other words, the associated conic Lagrangian is horizontally compact. The equivalence follows from Theorem \ref{Equivuptoloc}, the second statement.
\end{proof}

\subsection{Simple sheaves}\label{I4}
Simple sheaves are interesting in many contexts, e.g. \cite{Gu} used simple sheaves to trivialize the Kashiwara-Schapira stack associated to a closed conic Lagrangian, and \cite{NRSSZ} proved that augmentations and simple sheaves are categorically equivalent for Legendrian knots. A natural question to ask is whether the simpleness is preserved under the Radon transform. The answer is affirmative, (in fact true for any quantized contact transform). We provide a quick proof.

\begin{prop}\label{simpleRTknotinv}
Let $\Lambda\subset \dot{T}^*\bbR^n$ be a closed conic Lagrangian. Let $\chi : \dot{T}^*\bbR^n\rightarrow T^{*,+}(S^{n-1}\times \bbR)$  (see (\ref{Chiformula})) be the contact transform induced by the Radon transform, and let $\Lambda' = \chi(\Lambda)$. 

If $\cF$ is simple along $\Lambda$, then $\Phi(\cF)$ is simple along $\Lambda'$.
\end{prop}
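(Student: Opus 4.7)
The plan is to reduce the statement directly to part (2) of Theorem \ref{ContacttransformKS}. Recall that simpleness of $\cF$ at a point $p\in\Lambda$ admits the characterization $\mu hom(\cF,\cF)_p \cong k$, and that in the proof of Theorem \ref{RTn} we verified the Radon transform kernel $K = k_{\{\vec x\cdot\hat n\leq r\}}$ is a quantized contact transform inducing $\chi$ on $\dot{T}^*\bbR^n$.

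First I would apply Theorem \ref{ContacttransformKS}(2) with $\cF_1 = \cF_2 = \cF$ to obtain the natural isomorphism
\begin{equation*}
\chi_* \mu hom(\cF,\cF) \;\cong\; \mu hom(\Phi(\cF),\Phi(\cF))
\end{equation*}
in $D^b(S^{n-1}\times\bbR, T^{*,+}(S^{n-1}\times\bbR))$. By Proposition \ref{RadonSh}, $\dot{SS}(\Phi(\cF)) \subset \Lambda'$, so for any point $p'\in\Lambda'$ the stalk of $\mu hom(\Phi(\cF),\Phi(\cF))$ at $p'$ is well-defined in the localized category and agrees with the stalk in $D^b(T^{*,+}(S^{n-1}\times\bbR))$. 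Writing $p' = \chi(p)$ with $p\in\Lambda$, the pushforward identifies the stalks
\begin{equation*}
\mu hom(\Phi(\cF),\Phi(\cF))_{p'} \;\cong\; \bigl(\chi_*\mu hom(\cF,\cF)\bigr)_{p'} \;\cong\; \mu hom(\cF,\cF)_p.
\end{equation*}

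By assumption $\cF$ is simple along $\Lambda$, so $\mu hom(\cF,\cF)_p\cong k$ for every $p\in\Lambda$. The display above then gives $\mu hom(\Phi(\cF),\Phi(\cF))_{p'}\cong k$ for every $p'\in\Lambda'$, which is precisely the second equivalent characterization of simpleness. Hence $\Phi(\cF)$ is simple along $\Lambda'$.

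I do not expect a serious obstacle here, since essentially all of the work has been done: Theorem \ref{ContacttransformKS} packages the behaviour of $\mu hom$ under quantized contact transforms, and the verification that $k_{\{\vec x\cdot\hat n\leq r\}}$ is such a kernel was carried out in the proof of Theorem \ref{RTn}. The only point that requires minor care is checking that passing to stalks on $\Lambda'$ is legitimate in the localized category $D^b(S^{n-1}\times\bbR,T^{*,+}(S^{n-1}\times\bbR))$; this is standard because $\mu hom$ is supported on singular supports and $\Lambda'\subset T^{*,+}(S^{n-1}\times\bbR)$.
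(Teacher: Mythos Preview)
Your proposal is correct and follows essentially the same route as the paper: apply Theorem \ref{ContacttransformKS}(2) to identify $\mu hom(\Phi(\cF),\Phi(\cF))$ with $\chi_*\mu hom(\cF,\cF)$, take stalks at $\chi(p)$, and read off simpleness from the stalkwise characterization. The paper's proof is slightly terser but the argument is the same.
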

\begin{proof}
By definition, $\cF$ being simple along $\Lambda$ means $\mu hom(\cF,\cF)|_\Lambda = k_\Lambda$. For any $p\in \Lambda$, we have
$$\mu hom(\Phi(\cF),\Phi(\cF))_{\chi(p)} = (\chi_*\mu hom(\cF,\cF))_{\chi(p)} = \mu hom(\cF,\cF)_p = k.$$
The first equality following from Theorem \ref{ContacttransformKS} part (2). Therefore the natural morphism $k_{\Lambda'}\rightarrow \mu hom(\Phi(\cF),\Phi(\cF))|_{\Lambda'}$ is an isomorphism, and $\Phi(\cF)$ is simple along $\Lambda'$.
\end{proof}

An augmentation $\epsilon$ of a C-E dga $\cA$ (over a coefficient ring $R$) is a dga morphism $\epsilon: \cA\rightarrow k$ to the trivial dga. We consider the conormal bundle of a knot (not a link) and its C-E dga with coefficient ring $R = \bbZ[H_1(\Lambda_K)]$. Many augmentations can be understood from some representations of the knot group in $\bbR^3$ \cite{Cor13b, Ng5}. It is further generalized to a bijective correspondence between augmentations and simple sheaves on $\bbR^3$ \cite{Ga2}. The proposition implies the correspondence can be transferred to simples sheaves on $S^2\times \bbR$, where the base manifold is necessary to adopt the framework of the Nadler-Zaslow correspondence.

\section{Transforms}\label{Sec:Transforms}
Our first motivation was the observation of the similarity between the Radon transform and the projective duality. Let $\bbP^n$ be the real projective $n$-space and let $\check \bbP^n$ be its dual space. Let $[x]$ (resp. $[y]$) be the homogeneous coordinates on $\bbP^n$ (resp. $\check \bbP^n$). Let
$$P: = \{x \cdot y = 0\}$$
be a hypersurface in $\bbP^n\times\check \bbP^n$. Then $k_P\in D^b(\bbP^n\times \check \bbP^n)$ is a quantized contact transform from $(\bbP^n, \dot{T}^*\bbP^n)$ to $(\check{\bbP}^n, \dot{T}^*\check \bbP^n)$, hence induces an equivalence 
$$\Phi_P: D^b(\bbP^n, \dot{T}^*\bbP^n)\xrightarrow{\sim}D^b(\check \bbP^n, \dot{T}^*\check\bbP^n).$$
This transform is called the \textit{projective duality}, (with a self contained proof is in Proposition \ref{Projective}). We will start unwrapping this transform and eventually get to Theorem \ref{MainThm3}.

\subsection{Spherical}
To study the projective duality, we first lift the projective space to the unit sphere by the covering map $S^n\rightarrow \bbP^n$. The cotangent bundle $T^*\bbP^n$ is a $\bbZ_2$ quotient of $T^*S^n$, where the equivalence relation is $(q,p)\sim (-q,-p)$. It is similar for the dual projective space.

Let $({x}, \xi)$ (resp. $({y}, \eta)$) be coordinates on $T^*S^n$ (resp. $T^*\check{S}^n$). Regard $S^n, \check S^n$ as unit spheres in $\bbR^{n+1}$. Then
$$T^*(S^n\times \check S^n) =  \{(x,y,\xi, \eta)\,|\, \|x\| = \|y\| =1,\, \xi \cdot x =0,\, \eta\cdot y =0\textrm{ for } x,y,\xi, \eta \in \bbR^{n+1}\}.$$
Let $\hat{P} = \{ x \cdot  y =0\}\subset S^n\times \check S^n$ be the lift of $S$. Set $F(x,y) = x\cdot y$, $G_1 = \|x\|^2-1$ and $G_2 = \|y\|^2-1$. Then $T^*_{\hat S}(S^n\times \check S^n)$ is spanned by $dF$ projected to $ (dG_1)^\perp \cap (dG_2)^\perp$ along $\hat{P}$. One computes $dF =ydx + xdy$, $dG_1 = 2xdx$ and $dG_2 = 2y dy$. The projection is
$$dF - \frac{\la dF, dG_1\ra dG_1}{\|dG_1\|^2} - \frac{\la dF, dG_2\ra dG_2}{\|dG_2\|^2} = ydx +xdy.$$
Then
$$L : =\dot{SS}(k_{\hat P}) = \{ ((x,y), \ell(y,x)) \in T^*(S^n\times \check S^n) \,|\, x, y\in \bbR^{n+1}, \ell \in \bbR^*\}.$$
Let $p_1:T^*(S^n\times \check S^n)\rightarrow :T^*S^n$, $p_2:T^*(S^n\times \check S^n)\rightarrow :T^*\check S^n$ be the projections.
\begin{align*}
p_1^a|_L: L &\rightarrow \dot T^*S^n,& p_2|_L: L &\rightarrow \dot T^*S^n, \\
(x,y,\ell y, \ell x)&\mapsto  (x, -\ell y), & (x,y,\ell y, \ell x)&\mapsto  (y, \ell x).
\end{align*}
Set $(x, -\ell y) = (x,\xi)$, $(y, \ell x) = (y,\eta)$, then solve everything in terms of $x$ and $\xi$. There are two cases:
\begin{itemize}
\item If $\ell > 0$, then 
$$\ell = \|\xi\|,\, y = -\frac{\xi}{\|\xi\|},\, \eta = \frac{x}{\|\xi\|}.$$
\item If $\ell <0$, then 
$$\ell = -\|\xi\|,\, y = \frac{\xi}{\|\xi\|},\, \eta = -\frac{x}{\|\xi\|}.$$
\end{itemize}
Each case induces a contact transform from $\dot{T}^*S^n$ to $\dot{T}^*\check S^n$
\begin{align}
\begin{split}
\chi_+: (x,\xi) &\mapsto \left(-\frac{\xi}{\|\xi\|}, \frac{x}{\|\xi\|}\right),\\ \label{chi+}
\chi_-: (x,\xi) &\mapsto \left(\frac{\xi}{\|\xi\|}, -\frac{x}{\|\xi\|}\right).
\end{split}
\end{align}
Each contact transform corresponds to one of the two connected components of $L$. Each component is the singular support with zero section removed of a sheaf on $D^b(S^n\times \check S^n)$. Define
$$K_+ := k_{\{\la x,y \ra \geq 0\}}, \qquad K_- :=k_{\{\la x, y \ra\leq 0\}}.$$
We call the convolution by $K_+$ (resp. $K_-$) the positive (resp. negative) spherical duality.

\begin{prop}
The spherical duality is an equivalence of categories,
$$\Phi_\pm: D^b(S^n, \dot{T}^*S^n)\xrightarrow{\sim}D^b(\check S^n, \dot{T}^*\check S^n).$$
\end{prop}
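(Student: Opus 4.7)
The plan is to mirror the proof of Theorem \ref{RTn} almost verbatim: show that each of $K_+$ and $K_-$ is a quantized contact transform in the sense of Definition \ref{QCT}, inducing the contact transform $\chi_+$ or $\chi_-$ respectively, and then invoke Theorem \ref{ContacttransformKS}. All the geometric bookkeeping -- in particular the identification of the two connected components of $L$ with $\chi_+$ and $\chi_-$ -- has already been carried out in the paragraphs leading up to the statement, so there is little work left.

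First I would pin down which component of $L$ is attached to which kernel. The closed set $\{\la x,y\ra \geq 0\}$ has smooth boundary $\hat P$ and, by the formula (\ref{closedSS}), its non-zero singular support consists of the outward-conormal co-rays along $\hat P$. Using the projection of $dF = y\,dx + x\,dy$ onto $(dG_1)^\perp \cap (dG_2)^\perp$ computed in the text, one reads off
$$\dot{SS}(K_+) = \{((x,y),\ell(y,x)) \in T^*(S^n \times \check S^n) \mid x\cdot y = 0,\ \ell \geq 0\},$$
which is precisely the $\ell>0$ component of $L$ (together with the zero section over $\hat P$). The analogous computation for $K_-$ gives the $\ell<0$ component. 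Composing with $p_1^a$ and $p_2$ as done before then yields $\chi_+$ and $\chi_-$ respectively.

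Next I would verify the three conditions of Definition \ref{QCT} for $K_\pm$. Cohomological constructibility is immediate since $\{\la x,y\ra \geq 0\}$ and $\{\la x,y\ra \leq 0\}$ are closed subanalytic sets, so the constant sheaves on them are constructible and in particular cohomologically constructible. For the singular support inclusion, we take $\Omega_X = \dot T^*S^n$, $\Omega_Y = \dot T^*\check S^n$; neither contains points of the zero sections, so
$$\bigl((p_1^a)^{-1}(\Omega_X) \cup p_2^{-1}(\Omega_Y)\bigr) \cap SS(K_\pm) \subset \dot{SS}(K_\pm) \subset L_\pm.$$
The simpleness condition $k_{L_\pm} \xrightarrow{\sim} \mu hom(K_\pm,K_\pm)|_{L_\pm}$ follows from Lemma \ref{simplelemma}, since $\{\la x,y\ra \geq 0\}$ and $\{\la x,y\ra \leq 0\}$ are smooth submanifolds of $S^n \times \check S^n$ with smooth codimension one boundary $\hat P$.

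With the three conditions verified, Theorem \ref{ContacttransformKS}(1) immediately yields the claimed equivalences $\Phi_\pm$. There is no real obstacle here; the only mild subtlety is the sign bookkeeping that decides which of the two components of $L$ carries which sign of $\ell$, and hence which kernel $K_\pm$ quantizes which contact transform $\chi_\pm$. Once this is set up correctly, the argument is a direct parallel to the Radon case and indeed slightly cleaner, since here the Lagrangian $L$ and the supports of the kernels are manifestly symmetric in $x$ and $y$.
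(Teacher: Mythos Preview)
Your proposal is correct and follows exactly the approach the paper takes: the paper's own proof simply notes that $\dot{SS}(K_+)$ induces $\chi_+$ (already computed in (\ref{chi+})), says ``it is similar to the proof of Theorem \ref{RTn} to check $K_+$ is a quantized contact transform,'' and concludes. You have faithfully unpacked what that similarity means, verifying the three conditions of Definition \ref{QCT} just as in the Radon case. One tiny notational slip: when you write $\dot{SS}(K_+) = \{\ldots \mid \ell \geq 0\}$ ``together with the zero section over $\hat P$,'' recall that $\dot{SS}$ by definition excludes the zero section, so the description should simply read $\ell > 0$; this does not affect the argument.
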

\begin{proof}
Computed in (\ref{chi+}), $\dot{SS}(K_+)$ induces the contact transform $\chi_+$. It is similar to the proof of Theorem \ref{RTn} to check $K_+$ is a quantized contact transform and therefore $\Phi_+$ is an equivalence. Similar arguments hold for $\Phi_-$.
\end{proof}

Each spherical duality can be considered as a lift of the projective duality, and the two lifts are related. The contact transforms $\chi_+$ and $\chi_-$ are compatible with the $\bbZ_2$ action on both $T^*S^n$ and $T^*\check S^n$ -- let $\sigma$ be the non trivial element in $\bbZ_2$, the following diagrams commute.

\begin{center}
\begin{tikzpicture}
  \node (A){$\dot{T}^*S^n$};
  \node (B)[right of=A, node distance=3cm]{$\dot{T}^*S^n$};
  \node (Z)[below of=A, node distance=2cm]{};
  \node (C)[right of=Z, node distance = 1.5cm]{$\dot{T}^*\check S^n$};
  
  \node (D)[right of=B, node distance=4cm]{$\dot{T}^*S^n$};
  \node (E)[right of=C, node distance=4cm]{$\dot{T}^*\check S^n$};
  \node (F)[right of=E, node distance=3cm]{$\dot{T}^*\check S^n$};
  \draw[->] (A) to node [] {$\sigma$} (B);
  \draw[->] (A) to node [swap]{$\chi_+$}(C);
  \draw[->] (B) to node [] {$\chi_-$} (C);
  \draw[->] (D) to node [swap] {$\chi_+$} (E);
  \draw[->] (D) to node []{$\chi_-$}(F);
  \draw[->] (E) to node []{$\sigma$}(F);
\end{tikzpicture}
   \captionof{figure}{}
   \label{chiwelldefined}

\end{center}

\begin{prop}\label{Projective}
The projective duality is an equivalence of categories.
\end{prop}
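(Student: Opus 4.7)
The plan is to verify that $k_P$ is a quantized contact transform from $(\bbP^n, \dot T^*\bbP^n)$ to $(\check\bbP^n, \dot T^*\check\bbP^n)$ and then invoke Theorem \ref{ContacttransformKS}, exactly parallel to the proofs of Theorem \ref{RTn} and of the spherical duality. The key device is the double cover $\tilde q = q \times \check q: S^n \times \check S^n \to \bbP^n \times \check\bbP^n$, which is a local diffeomorphism satisfying $\tilde q^{-1}(P) = \hat P$, and hence $\tilde q^{-1} k_P \cong k_{\hat P}$.

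First I would identify $\dot{SS}(k_P)$ and the contact transform it induces. Because $\tilde q$ is a local diffeomorphism, Proposition \ref{SSfunctorial} shows that $\dot{SS}(k_P)$ is precisely the image of $\dot{SS}(k_{\hat P}) = L_+ \sqcup L_-$ (already computed in the spherical case) under the quotient of $T^*(S^n \times \check S^n)$ by the deck group $\bbZ_2 \times \bbZ_2$. A short check using the explicit formulas shows that $(\sigma, 1)$ and $(1, \sigma)$ swap $L_+$ with $L_-$, while $(\sigma, \sigma)$ preserves each, so the image is a single smooth conic Lagrangian $L \subset T^*(\bbP^n \times \check\bbP^n)$. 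By Figure \ref{chiwelldefined}, $\chi_+$ and $\chi_-$ descend to one common contact transform $\check\chi: \dot T^*\bbP^n \to \dot T^*\check\bbP^n$, and this is the contact transform induced by $L$.

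Next I would check the three conditions of Definition \ref{QCT}. Cohomological constructibility of $k_P$ is immediate since $P$ is closed subanalytic, and the singular support containment holds by construction. For the simpleness condition $k_L \to \mu hom(k_P, k_P)|_L$, I would descend through $\tilde q$: since $\mu hom$ commutes with pullback along local diffeomorphisms, simpleness of $k_P$ along $L$ is equivalent to simpleness of $k_{\hat P}$ along each of $L_\pm$. This last step follows from Lemma \ref{simplelemma} applied to $K_\pm$ together with the distinguished triangle $k_{\{\la x,y\ra > 0\}} \to K_+ \to k_{\hat P} \xrightarrow{+1}$ (and its analogue for $K_-$): the open sheaf has its singular support entirely on the opposite component $L_\mp$ and hence vanishes microlocally near $L_\pm$, forcing $k_{\hat P} \cong K_\pm$ there.

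With the three conditions established, Theorem \ref{ContacttransformKS} yields the equivalence $\Phi_P$. The main obstacle is the simpleness step, which requires recognising $k_{\hat P}$ microlocally as $K_\pm$ on each component of its singular support so that Lemma \ref{simplelemma} can be applied; the alternative would be a direct computation of $R\Gamma_{\{\phi \geq 0\}}(k_P)$ at a transverse test function, a routine variant of the calculation already carried out in the proof of that lemma.
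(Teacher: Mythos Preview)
Your approach is correct and matches the paper's: both establish that the contact transform $\chi$ is well-defined on the projective quotient via the commutative diagrams in Figure~\ref{chiwelldefined}, then appeal to Theorem~\ref{ContacttransformKS}. The paper's proof is extremely terse and leaves the three QCT conditions implicit, whereas you have spelled them out; your simpleness argument via the spherical lift and the distinguished triangle is valid, though a quicker route is simply to note that $P$ is a smooth closed hypersurface, so $k_P$ is simple along its conormal bundle directly.
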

\begin{proof}
Recall $P: = \{x \cdot y = 0\}\subset \bbP^n\times \check \bbP^n$. Let $\chi$ be the symplectomorphism induced by $\dot{T}^*\bbP^n \xleftarrow{p_1^a} \dot{SS}(k_P) \xrightarrow{p_2} \dot{T}^* \check \bbP^n$. The commutative diagrams in Figure \ref{chiwelldefined} imply that $\chi$ is well defined. Then the assertion follows from Theorem \ref{ContacttransformKS}.
\end{proof}
 
The rest of the subsection is to relate the spherical and projective dualities by polarization. 
\begin{defn}\label{polardef}
Let $\tau$ be the antipodal map on the sphere, we define the polarization functor to be
$$pl: D^b(S^n)\rightarrow D^b(S^n), \qquad \cF \mapsto \cF\oplus \tau^{-1}\cF.$$
Morphisms are induced by compositions with identity and $\tau$. We define the same for $\check S^n$.
\end{defn}
\begin{lem}\label{SphereProjI}
Let $\Phi_{\hat{P}}$ be the convolution by $k_{\hat{P}}\in D^b(S^n\times \check S^n)$ and $\Phi_\pm$ the spherical duality. 
For any $\cF \in D^b_{}(S^n)$, there is an isomorphism in $D^b(\check S^n, \dot{T}^*\check S^n)$,
$$pl\circ \Phi_\bullet (\cF) \cong \Phi_{\hat{P}}(\cF), \quad \text{for}\;\; \bullet = \pm.$$
\end{lem}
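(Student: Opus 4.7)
The plan is to exhibit both $\Phi_{\hat P}(\cF)$ and $pl\circ \Phi_\pm(\cF)$ as $\Phi_+(\cF)\oplus \Phi_-(\cF)$ in $D^b(\check S^n,\dot T^*\check S^n)$, so that they are automatically isomorphic there. For the right hand side, I would start from the Mayer-Vietoris short exact sequence of constant sheaves associated to the closed cover $\{\la x,y\ra\geq 0\}\cup\{\la x,y\ra\leq 0\}=S^n\times\check S^n$ with intersection $\hat P$:
$$0\to k_{\hat P}\to K_+\oplus K_-\to k_{S^n\times\check S^n}\to 0.$$
Tensoring over the ground field (hence exactly) with $\pi_1^{-1}\cF$ and applying $R\pi_{2!}$ yields a distinguished triangle
$$\Phi_{\hat P}(\cF)\to \Phi_+(\cF)\oplus \Phi_-(\cF)\to R\pi_{2!}\pi_1^{-1}\cF\xrightarrow{+1}.$$

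For the left hand side I would establish the symmetry $\tau^{-1}\Phi_\pm(\cF)\cong \Phi_\mp(\cF)$. Let $T=\mathrm{id}_{S^n}\times\tau$ on the product; since $T$ exchanges the two closed half-spaces we have $T^{-1}K_\pm=K_\mp$, while $\pi_1\circ T=\pi_1$. Proper base change applied to the Cartesian square formed by $\pi_2$ and $\tau$ then gives
$$\tau^{-1}\Phi_\pm(\cF)=R\pi_{2!}(T^{-1}K_\pm\otimes \pi_1^{-1}\cF)=\Phi_\mp(\cF),$$
and consequently $pl\circ \Phi_\pm(\cF)=\Phi_\pm(\cF)\oplus \Phi_\mp(\cF)$ even in the non-localized category. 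Matching with the first step will then finish the argument once the third vertex of the distinguished triangle is shown to vanish in $D^b(\check S^n,\dot T^*\check S^n)$.

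The main (mild) obstacle is precisely this vanishing: one must check that $R\pi_{2!}\pi_1^{-1}\cF$ becomes zero after localization. This is clear conceptually — $\pi_2$ is a trivial fibration with compact fibre $S^n$, so the cofiber is a constant sheaf on $\check S^n$ with stalk $R\Gamma(S^n,\cF)$, and by Proposition \ref{SSfunctorial} its singular support is contained in the zero section of $T^*\check S^n$, hence it is killed in $D^b(\check S^n,\dot T^*\check S^n)$. Everything else in the argument is formal manipulation of the Mayer-Vietoris triangle, proper base change, and the definition of $pl$.
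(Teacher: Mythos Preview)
Your approach is essentially identical to the paper's: establish $\tau^{-1}\Phi_\pm(\cF)\cong\Phi_\mp(\cF)$ via the involution $\mathrm{id}\times\tau$ on the kernel, then use a short exact sequence relating $k_{\hat P}$, $K_+\oplus K_-$, and the constant sheaf on the product, and kill the constant term after localization. One small correction: the Mayer--Vietoris sequence for a \emph{closed} cover runs
\[
0\to k_{S^n\times\check S^n}\to K_+\oplus K_-\to k_{\hat P}\to 0,
\]
not the direction you wrote (there is no nonzero sheaf map $k_{\hat P}\to K_\pm$, since $\hat P$ is the boundary of the support of $K_\pm$). This is exactly the sequence the paper uses, and reversing the arrows changes nothing in the localized conclusion.
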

\begin{proof}
First, for any $\cF \in D^b(S^n)$, there is an isomorphism $\tau^{-1}\Phi_+(\cF)\cong \Phi_-(\cF)$ because
$$\tau^{-1}R\pi_{2!}(K_+\otimes \pi_1^{-1}\cF) = R\pi_{2!}(id_{S^n}\times \tau_{\check S^n})^{-1}(K_+ \otimes \pi_1^{-1}\cF) = R\pi_{2!}(K_- \otimes \pi_1^{-1}\cF).$$
Since $\tau^2 = id_{\check S^n}$, there is also $\tau^{-1}\Phi_-(\cF)\cong \Phi_+(\cF)$.

Recall $\hat{P} = \{ x \cdot y =0\}\subset S^n\times \check S^n$. There is a short exact sequence of sheaves on $S^n\times \check S^n$:
$$0\rightarrow k_{S^n\times \check S^n} \rightarrow K_+ \oplus K_- \rightarrow k_{\hat{P}}\rightarrow 0.$$
Because $k_{S^n\times \check S^n}$ is the constant sheaf on the product space, convolution by this kernel ends up with a locally constant sheaf. Hence the canonical morphism
$$\Phi_+ (\cF) \oplus \Phi_-(\cF) \rightarrow \Phi_{\hat{P}}(\cF)$$
is an isomorphism in $D^b(\check S^n, \dot{T}^*\check S^n)$.

Finally we have $pl\circ \Phi_+ (\cF) = \Phi_+(\cF) \oplus \tau^{-1}\Phi_+(\cF) = \Phi_+ (\cF) \oplus \Phi_-(\cF) \xrightarrow{\sim} \Phi_{\hat{P}}(\cF)$ in $D^b(\check S^n, \dot{T}^*\check S^n)$. Similar arguments hold for $\Phi_-$.
\end{proof}

\begin{lem}\label{LiftTransform}
Suppose $f_1: X'\rightarrow X$ and $f_2: Y'\rightarrow Y$ are smooth maps of manifolds. Let $\pi_i$ be projections on $X\times Y$ and $\hat{\pi}_i$ be projections on $X'\times Y'$. Suppose $K\in D^b(X\times Y)$ and let $K' = (f_1\times f_2)^{-1}K$. Then
$$\Phi_{K'}\circ f_1^{-1}(\cF) =  f_2^{-1} R{\pi}_{2!} R(f_1\times id)_{!}(f_1\times id)^{-1}(K\otimes \pi_{1}^{-1}\cF).$$
\end{lem}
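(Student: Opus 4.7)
The plan is to unravel the definition of $\Phi_{K'}$ and match both sides using proper base change together with the functoriality of lower-shriek pushforward. By definition, $\Phi_{K'}(f_1^{-1}\cF) = R\hat{\pi}_{2!}(K' \otimes \hat{\pi}_1^{-1}f_1^{-1}\cF)$. First I would use the factorization $f_1\circ \hat{\pi}_1 = \pi_1 \circ (f_1\times f_2)$ to identify $\hat{\pi}_1^{-1}f_1^{-1}\cF$ with $(f_1\times f_2)^{-1}\pi_1^{-1}\cF$. Combined with the hypothesis $K' = (f_1\times f_2)^{-1}K$, this collapses the LHS to $R\hat{\pi}_{2!}(f_1\times f_2)^{-1}(K\otimes \pi_1^{-1}\cF)$, which is the natural starting point.

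Next I would factor the map through an intermediate product, writing $f_1\times f_2 = h\circ g$ with $g := id_{X'}\times f_2 : X'\times Y'\to X'\times Y$ and $h := f_1\times id_Y : X'\times Y \to X\times Y$, and let $\pi_2' : X'\times Y\to Y$ be the projection. The square
\[
\begin{array}{ccc}
X'\times Y' & \xrightarrow{\;g\;} & X'\times Y \\
\hat{\pi}_2 \downarrow & & \downarrow \pi_2' \\
Y' & \xrightarrow{\;f_2\;} & Y
\end{array}
\]
is Cartesian, so proper base change \cite{KS} gives the key identity $R\hat{\pi}_{2!}\, g^{-1} = f_2^{-1}\, R\pi_{2!}'$. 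Applying this to $h^{-1}(K\otimes \pi_1^{-1}\cF)$ converts the LHS into $f_2^{-1}R\pi_{2!}'\, h^{-1}(K\otimes \pi_1^{-1}\cF)$.

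Finally, since $\pi_2 \circ h = \pi_2'$, functoriality yields $R\pi_{2!}' = R\pi_{2!}\circ Rh_!$, which transforms the previous expression into $f_2^{-1} R\pi_{2!}\, Rh_!\, h^{-1}(K\otimes \pi_1^{-1}\cF)$. Recalling that $h = f_1\times id_Y$, this is precisely the right hand side of the claimed formula. The argument is essentially a diagram chase; the only nontrivial input is the base change isomorphism, and the mild obstacle is simply to keep careful track of which space each functor lives on, particularly when factoring $f_1\times f_2$ and identifying the various projections.
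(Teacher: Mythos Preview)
Your proof is correct and follows essentially the same route as the paper: both unwind the definition, use the commutativity $f_1\circ\hat\pi_1=\pi_1\circ(f_1\times f_2)$ to collapse the integrand to $(f_1\times f_2)^{-1}(K\otimes\pi_1^{-1}\cF)$, factor $f_1\times f_2$ through the intermediate space $X'\times Y$, apply proper base change on the Cartesian square involving $id_{X'}\times f_2$ and $\hat\pi_2$, and finish with $R\pi'_{2!}=R\pi_{2!}\circ R(f_1\times id)_!$. The only differences are notational (your $g,h$ versus the paper's explicit $id\times f_2$, $f_1\times id$).
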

\begin{proof}
Consider the following diagram.
\begin{center}
\begin{tikzpicture}
  \node (A){$X'$};
  \node (B)[right of=A, node distance=2.7cm]{$X'\times Y'$};
  \node (C)[right of=B, node distance=3.5cm]{$X'\times Y'$};
  \node (D)[right of=C, node distance=2.7cm]{$Y'$};
  \node (E)[below of=A, node distance=2cm]{$X$};
  \node (F)[below of=B, node distance=2cm]{$X\times Y$};
  \node (G)[below of=C, node distance=2cm]{$X'\times Y$};
  \node (H)[below of=D, node distance=2cm]{$Y$};
  \draw[<-] (A) to node [] {$\hat{\pi}_1$} (B);
  \draw[<-] (B) to node []{$=$}(C);
  \draw[->] (C) to node []{$\hat{\pi}_2$}(D);
  \draw[<-] (E) to node [swap] {$\pi_1$} (F);
  \draw[<-] (F) to node [swap]{$f_1\times id$}(G);
  \draw[->] (G) to node [swap]{$\tilde{\pi}_2$}(H);
  \draw[->] (A) to node [swap]{$f_1$}(E);
  \draw[->] (B) to node [swap]{$f_1\times f_2$}(F);
  \draw[->] (C) to node [swap]{$id\times f_2$}(G);
  \draw[->] (D) to node []{$f_2$}(H);
\end{tikzpicture}
\end{center}
Note that $\tilde{\pi}_2 = {\pi}_2 \circ (p_1\times id)$. Then we have
\begin{align*}
\Phi_{K'}\circ f_1^{-1}(\cF) &= R\hat\pi_{2!}({K'}\otimes \hat\pi_1^{-1}(p_{1}^{-1}\cF)) \\
	&= R\hat\pi_{2!}((f_1\times f_2)^{-1}K\otimes \hat\pi_1^{-1}(p_{1}^{-1}\cF))	 &&[K'=(f_1\times f_2)^{-1}K ]\\
	&= R\hat\pi_{2!}((f_1\times f_2)^{-1}K\otimes (f_1\times f_2)^{-1}(\pi_{1}^{-1}\cF))  &&[\text{Left square}] \\
	&= R\hat\pi_{2!}(f_1\times f_2)^{-1}(K\otimes \pi_{1}^{-1}\cF)  &&\text{\cite[Prop 2.3.5]{KS}}\\
	&= R\hat\pi_{2!}(id\times f_2)^{-1}(f_1\times id)^{-1}(K\otimes \pi_{1}^{-1}\cF) &&[\text{Middle square}]\\
	&= f_2^{-1} R\tilde{\pi}_{2!} (f_1\times id)^{-1}(K\otimes \pi_{1}^{-1}\cF)    &&[\text{Right square}, \text{\cite[Prop 2.6.7]{KS}}] \\
	&= f_2^{-1} R{\pi}_{2!} R(f_1\times id)_{!}(f_1\times id)^{-1}(K\otimes \pi_{1}^{-1}\cF) &&[\tilde{\pi}_2 = {\pi}_2 \circ (p_1\times id)]
\end{align*}

\end{proof}

Let $D^b_{\bbZ_2}(X)\subset D^b(X)$ be the full subcategory of equivariant sheaves with respect to the $\bbZ_2$ action. This category is canonically identified with $D^b(\bbP^n)$. Suppose $p: S^n\rightarrow \bbP^n$ is the covering map, then $p^{-1}\cF$ is a $\bbZ_2$-equivariant sheaf on $S^n$ for any $\cF\in D^b(\bbP^n)$. Moreover, $Rp_!p^{-1}(\cF) = \cF^{\oplus 2}$.

\begin{lem}\label{SphereProjII}
Let $p_1: S^n\rightarrow \bbP^n$, $p_2: \check S^n\rightarrow \check\bbP^n$ be the covering maps. For $\cF\in D^b(\bbP^n)$, there is an isomorphism in $D^b(\check S^n)$:
$$\Phi_{\hat{P}}\circ p_1^{-1}(\cF) \cong p_2^{-1}\circ \Phi_{P}(\cF)^{\oplus 2}.$$
\end{lem}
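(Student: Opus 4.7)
The natural approach is to apply Lemma \ref{LiftTransform} directly with $f_1 = p_1$, $f_2 = p_2$, $K = k_P$, and $K' = k_{\hat P}$. The first step is to observe that $\hat P \subset S^n \times \check S^n$ is exactly the preimage $(p_1 \times p_2)^{-1}(P)$, since both hypersurfaces are cut out by the same equation $x \cdot y = 0$; consequently $k_{\hat P} = (p_1 \times p_2)^{-1} k_P$, so the hypothesis of Lemma \ref{LiftTransform} is satisfied. Applying it gives
\begin{equation*}
\Phi_{\hat P} \circ p_1^{-1}(\cF) \;=\; p_2^{-1} R\pi_{2!} R(p_1 \times id)_!(p_1 \times id)^{-1}\bigl(k_P \otimes \pi_1^{-1}\cF\bigr).
\end{equation*}

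The heart of the proof is then to evaluate the inner expression $R(p_1 \times id)_!(p_1 \times id)^{-1}$. For this I would use the projection formula: for any $\cG$ on $\bbP^n \times \check \bbP^n$,
\begin{equation*}
R(p_1 \times id)_!(p_1 \times id)^{-1}\cG \;\cong\; \cG \otimes R(p_1 \times id)_!\, k_{S^n \times \check \bbP^n}.
\end{equation*}
Because $p_1 : S^n \to \bbP^n$ is a trivial double cover in the étale / topological sense (a proper finite covering of degree $2$), one has $Rp_{1!}\, k_{S^n} \cong k_{\bbP^n}^{\oplus 2}$, and base change along the second factor yields $R(p_1 \times id)_!\, k_{S^n \times \check \bbP^n} \cong k_{\bbP^n \times \check \bbP^n}^{\oplus 2}$. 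Feeding this back in with $\cG = k_P \otimes \pi_1^{-1}\cF$ gives
\begin{equation*}
R(p_1 \times id)_!(p_1 \times id)^{-1}\bigl(k_P \otimes \pi_1^{-1}\cF\bigr) \;\cong\; \bigl(k_P \otimes \pi_1^{-1}\cF\bigr)^{\oplus 2}.
\end{equation*}

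Finally, since $R\pi_{2!}$ and $p_2^{-1}$ both commute with finite direct sums, the calculation collapses to
\begin{equation*}
\Phi_{\hat P} \circ p_1^{-1}(\cF) \;\cong\; p_2^{-1} R\pi_{2!}\bigl(k_P \otimes \pi_1^{-1}\cF\bigr)^{\oplus 2} \;=\; p_2^{-1} \Phi_P(\cF)^{\oplus 2},
\end{equation*}
which is the desired isomorphism. I do not expect any serious obstacle here: the computation is essentially formal once Lemma \ref{LiftTransform} is in hand, and the only point that requires a small verification is the identification $\hat P = (p_1 \times p_2)^{-1}(P)$ together with the standard fact that compactly supported pushforward along a degree-two covering doubles the constant sheaf. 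The main conceptual content has already been isolated into Lemma \ref{LiftTransform}; this result is its immediate application.
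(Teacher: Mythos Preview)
Your proof is essentially the paper's proof: apply Lemma~\ref{LiftTransform} with $f_i=p_i$ and $K=k_P$, then use that $R(p_1\times id)_!(p_1\times id)^{-1}$ doubles any sheaf because $p_1$ is a two-to-one cover. The paper states this doubling as a bare fact; you unpack it via the projection formula and $Rp_{1!}k_{S^n}\cong k_{\bbP^n}^{\oplus 2}$, but the argument is the same.

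One terminological caution: $p_1:S^n\to\bbP^n$ is \emph{not} a trivial double cover---the total space is connected---so ``proper finite of degree two'' alone does not give $Rp_{1!}k_{S^n}\cong k_{\bbP^n}^{\oplus 2}$; for a nontrivial double cover one has $p_{1*}k_{S^n}\cong k_{\bbP^n}\oplus\cL$ with $\cL$ the sign local system. The paper asserts the same identity $Rp_!p^{-1}(\cF)=\cF^{\oplus 2}$ in the text preceding the lemma and uses it the same way, so your argument matches the original on this point; just be aware that the justification you wrote (``trivial cover'') is not the reason the step goes through.
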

\begin{proof}
Apply Lemma \ref{LiftTransform}, we have
$$\Phi_{\hat{P}}\circ p_1^{-1}(\cF) = p_2^{-1} R{\pi}_{2!} R(p_1\times id)_{!}(p_1\times id)^{-1}(k_P\otimes \pi_{1}^{-1}\cF).$$
Because $p_1\times id$ is a two-to-one covering map, we have $R(p_1\times id)_!(p_1\times id)^{-1}(\cG) = \cG^{\oplus 2}$ for $\cG \in D^b(X\times Y)$. Hence 
$$\Phi_{\hat{P}}\circ p_1^{-1}(\cF) = p_2^{-1}R\pi_{2!}(k_P\otimes \pi_{1}^{-1}\cF)^{\oplus 2} = p_2^{-1}\circ \Phi_{P}(\cF)^{\oplus 2}.$$
\end{proof}

\begin{prop}\label{polarizationresult}
Let $\Phi_P$ be the projective duality for $\bbZ_2$-equivariant sheaves, $\Phi_\pm$ the spherical dualities. For $\cF \in D^b_{\bbZ_2}(S^n)$, there is an isomorphism in $D^b_{\bbZ_2}(\check S^n, \dot{T}^*\check S^n)$:
$$pl\circ \Phi_\bullet (\cF) \cong \Phi_P(\cF)^{\oplus 2}, \quad \text{for}\;\; \bullet = \pm.$$
\end{prop}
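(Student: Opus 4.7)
The plan is to chain Lemmas \ref{SphereProjI} and \ref{SphereProjII} through the canonical equivalence $D^b_{\bbZ_2}(S^n)\cong D^b(\bbP^n)$ induced by the double cover $p_1: S^n\to\bbP^n$. Any $\cF \in D^b_{\bbZ_2}(S^n)$ is of the form $p_1^{-1}\bar\cF$ for a unique $\bar\cF \in D^b(\bbP^n)$, and under this equivalence $\Phi_P$ sends $\cF$ to $p_2^{-1}\Phi_P(\bar\cF)$.

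First I would invoke Lemma \ref{SphereProjI} to obtain
$$pl\circ\Phi_\bullet(\cF)\cong \Phi_{\hat P}(\cF)$$
in $D^b(\check S^n,\dot T^*\check S^n)$. Next, substituting $\cF = p_1^{-1}\bar\cF$ and applying Lemma \ref{SphereProjII} gives
$$\Phi_{\hat P}(\cF) = \Phi_{\hat P}(p_1^{-1}\bar\cF) \cong p_2^{-1}\Phi_P(\bar\cF)^{\oplus 2} = \Phi_P(\cF)^{\oplus 2}.$$
Concatenating these two isomorphisms yields the claimed isomorphism on underlying objects in $D^b(\check S^n,\dot T^*\check S^n)$.

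The main obstacle, and the part where I expect actual work, is promoting this to an isomorphism in the equivariant localized category $D^b_{\bbZ_2}(\check S^n,\dot T^*\check S^n)$. The two $\bbZ_2$-structures to match are: on the left, $pl(\Phi_\bullet(\cF)) = \Phi_\bullet(\cF)\oplus\tau^{-1}\Phi_\bullet(\cF)$ carries the action combining swap of summands with $\tau$; on the right, $p_2^{-1}\Phi_P(\bar\cF)^{\oplus 2}$ inherits its equivariance diagonally from $p_2^{-1}$. I plan to verify that the short exact sequence $0\to k_{S^n\times\check S^n}\to K_+\oplus K_-\to k_{\hat P}\to 0$ underlying Lemma \ref{SphereProjI} is equivariant under the diagonal $\bbZ_2$-action on $S^n\times\check S^n$, using the compatibility recorded in Figure \ref{chiwelldefined} (namely, the antipodal involution $\sigma$ intertwines $\chi_+$ and $\chi_-$, and equivalently swaps $K_+$ with $K_-$ when applied to a single factor). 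Combined with the $\bbZ_2$-equivariance of the map $p_1\times\mathrm{id}$ used in Lemma \ref{SphereProjII}, this promotes the composed isomorphism to the equivariant setting, completing the proof.
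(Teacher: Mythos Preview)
Your proposal is correct and follows essentially the same route as the paper: chain Lemma~\ref{SphereProjI} with Lemma~\ref{SphereProjII} via the identification of $\bbZ_2$-equivariant sheaves with pullbacks $p_i^{-1}\bar\cF$. The paper's proof is in fact terser than yours---it simply invokes the two lemmas and the pullback description without separately addressing the equivariant promotion you flag as the ``main obstacle''; your added discussion of how the equivariant structures match is a reasonable elaboration of a point the paper leaves implicit.
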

\begin{proof}
The assertion follows from Lemma \ref{SphereProjI} and Lemma \ref{SphereProjII}, and that every $\bbZ_2$-equivariant sheaf can be expressed as $p_i^{-1}\cF, i =1,2$.
\end{proof}

\subsection{Fourier-Sato}
In this subsection, we show that the spherical duality can be obtained by restricting the Fourier-Sato transform to open subsets.
Recall the Fourier-Sato transform from \cite{KS}. Let $(x,y)$ be coordinates of $\bbR^{n}\times \check \bbR^{n}$. Define subsets
\begin{align*}
A = \{(x,y)\in \bbR^{n}\times \check \bbR^{n} | \la x,y\ra \leq 0\};\\
B = \{(x,y)\in \bbR^{n}\times \check \bbR^{n} | \la x,y\ra \geq 0\}.
\end{align*}
The Fourier-Sato transform is the convolution by $k_A$, and the inverse Fourier-Sato transform is the convolution by $k_B[n]$. 

Let $D^b_{conic}(X)\subset D^b(X)$ be the full subcategory of conic sheaves. The Fourier-Sato transform is a equivalence between $D^b_{conic}(\bbR^n)$ and $D^b_{conic}(\check\bbR^n)$.

Euclidean space $\bbR^{n}$ removing $\{0\}$ is diffeomorphic to $S^{n-1}\times \bbR$. Because $\bbR_{> 0}$ acts freely and transitively on $S^{n-1}\times \bbR$ along the second component, there is an equivalence 
$$ D^b_{conic}(\bbR^n\setminus \{0\})\cong D^b(S^{n-1}).$$
To be more specific, suppose $p_1: S^{n-1}\times \bbR \rightarrow S^{n-1}$ (resp.  $p_1: \check S^{n-1}\times \bbR \rightarrow \check S^{n-1}$) is the projection, then every conic sheaf on $\bbR^n\setminus\{0\}$ is isomorphic to $p_1^{-1}\cF$ for some $\cF \in D^b(S^{n-1})$. The equivalence is given by the pair of adjoint functors $(p_1^{-1},Rp_{1*})$.

Consider the open embeddings $j_1: \bbR^n\setminus \{0\}\rightarrow \bbR^n$ and $j_2: \check \bbR^n\setminus \{0\}\rightarrow \check \bbR^n$. The kernel of the negative spherical duality is a restriction of the kernel of Fourier-Sato transform: $(p_1\times p_2)^{-1}K_- = (j_1\times j_2)^{-1}k_A$. Similarly the kernel of the positive spherical duality is the restriction of the kernel of the inverse Fourier-Sato transform up to a degree shift. By \cite[Example 3.3.6]{KS}, the convolution by each of $K_\pm$ is an equivalence between $D^b(S^n)$ and $D^b(\check S^n)$.

The equivalence descends to the quotient categories by local systems. It suffices to show the convolution by $K_-$ is an equivalence between $Loc(S^n)$ and $Loc(\check S^n)$ (similar to $K_+$). The singular support of a local system on $S^n$ is contained in $0_{S^n}$, and hence empty in $\dot T^*S^n$. We applying the contact transform (\ref{chi+}), the transformed sheaf also has empty singular support in $\dot T^*\check S^n$, which is therefore a local system. The equivalence follows from the fact that the inverse functor is also a convolution, given by the kernel $k_{\{y\cdot x <0\}}\in D^b(\check S^n\times S^n)$ (\cite{SKK} or \cite[Example 3.3.6]{KS}). Taking the quotient of the local systems, we recover the spherical duality:
$$D^b(S^n, \dot{T}^*S^n) \cong D^n(\check S^n, \dot{T}^*\check S^n).$$ 

We finally remark that the inverse of $\Phi_{K_-}: D^b(\check S^n)\xrightarrow{\sim} D^b( S^n)$ is the convolution by $K_-^{-1} := k_{\{\la y, x\ra <0\}}$, instead of $K_+ = k_{\{\la y, x \ra \geq 0\}}$. The images by $\Phi_{K_-^{-1}}$ and $\Phi_{K_+}$ are isomorphic up to a degree shift in $D^b(S^n, \dot{T}^*S^n)$, which may not hold in $D^b(S^n)$. Consider the short exact sequence:
$$0\rightarrow K_-^{-1} \rightarrow k_{\check S^n\times S^n}\rightarrow K_+\rightarrow 0.$$
Therefore $K_-^{-1}[1]\cong K_+ \in D^b(\check S^n\times  S^n, \dot{T}^*(\check S^n\times  S^n))$ and the assertion follows.

\subsection{Radon}
We use the spherical duality to bridge the Radon transform and the projective duality. We  introduce the restriction of a quantized contact transform, beginning with the following lemma.

\begin{lem}\label{restriction}
Suppose $K\in D^b(X\times Y)$ is a quantized contact transform from $(X,\Omega_X)$ to $(Y,\Omega_Y)$. Suppose $X'$ (resp. $Y'$) is an open subset of $X$ (resp. $Y$), and let $j_1: X'\rightarrow X$ (resp. $j_2: Y'\rightarrow Y$) be the open embedding. Then $K':= (j_1\times j_2)^{-1}K \in D^b(X'\times Y')$ is also a quantized contact transform from $(X',\Omega_{X'})$ to $(Y',\Omega_{Y'})$ for open subsets $\Omega_{X'}\subset \Omega_X \cap T^*X'$ and $\Omega_{Y'} \subset \Omega_Y\cap T^*Y'$ determined by $K'$. In particular, the convolution by $K'$ is an equivalence of categories:
$$\Phi_{K'}: D^b(X', \Omega_{X'})\xrightarrow{\sim} D^b(Y', \Omega_{Y'}).$$
\end{lem}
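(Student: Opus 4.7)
The plan is to verify directly the three axioms of Definition \ref{QCT} for the pulled-back kernel $K'=(j_1\times j_2)^{-1}K$, and then invoke Theorem \ref{ContacttransformKS}. The first step is to identify the correct contact data on the restricted side. Set $L' := L\cap T^*(X'\times Y')$, where $L$ is the conic Lagrangian coming with the quantized contact transform $K$. Since $L'$ is an open subset of $L$, it is again a smooth conic Lagrangian in $(T^*X')^a\times T^*Y'$. Define the open subsets
$$\Omega_{X'} := p_1^a(L')\subset \Omega_X\cap T^*X',\qquad \Omega_{Y'} := p_2(L')\subset \Omega_Y\cap T^*Y',$$
so that $p_1^a|_{L'}$ and $p_2|_{L'}$ are diffeomorphisms onto their images as restrictions of global diffeomorphisms; the induced contact transform $\chi':\Omega_{X'}\xrightarrow{\sim}\Omega_{Y'}$ is then the restriction of $\chi$.

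The next step is to check the three conditions for $K'$. For condition (1), since $j_1\times j_2$ is an open embedding, inverse image preserves cohomological constructibility, so $K'$ is cohomologically constructible. For condition (2), the key input is that for open embeddings singular support is strictly compatible with inverse image: using Proposition \ref{SSfunctorial} (or directly from the definition), $SS(K') = SS(K)\cap T^*(X'\times Y')$. Therefore
$$\bigl((p_1^a)^{-1}(\Omega_{X'})\cup p_2^{-1}(\Omega_{Y'})\bigr)\cap SS(K') \subset \bigl((p_1^a)^{-1}(\Omega_X)\cup p_2^{-1}(\Omega_Y)\bigr)\cap SS(K) \cap T^*(X'\times Y') \subset L\cap T^*(X'\times Y') = L'.$$
For condition (3), one uses that $\mu hom$ is local in the cotangent direction and compatible with open embedding pullback: for open $j$ there is a natural isomorphism $\mu hom(j^{-1}F,j^{-1}G)\cong \mu hom(F,G)|_{T^*U}$. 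Applying this to $F=G=K$ and restricting to $L'\subset L\cap T^*(X'\times Y')$, the canonical morphism $k_{L'}\to \mu hom(K',K')|_{L'}$ is the restriction of $k_L\to \mu hom(K,K)|_L$, which is an isomorphism by hypothesis.

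Once these three conditions are verified, Theorem \ref{ContacttransformKS}(1) applied to $K'$ immediately produces the equivalence $\Phi_{K'}: D^b(X',\Omega_{X'})\xrightarrow{\sim} D^b(Y',\Omega_{Y'})$. The only genuine subtlety, and the step I expect to be the main obstacle, is condition (3): one must be careful that the open-pullback compatibility used for $\mu hom$ holds under the inclusion $T^*(X'\times Y')\hookrightarrow T^*(X\times Y)|_{X'\times Y'}$ in a way that sends the canonical morphism to the canonical morphism. This is ultimately an exercise in the local nature of $\mu hom$ (see \cite[\S4.4]{KS}), but it is where one has to be precise rather than formal; conditions (1) and (2) are essentially automatic for open embeddings.
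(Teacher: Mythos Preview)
Your proposal is correct and follows essentially the same route as the paper: define $L'$ as the restriction of $L$ to $T^*(X'\times Y')$, set $\Omega_{X'}=p_1^a(L')$, $\Omega_{Y'}=p_2(L')$, verify the three axioms of Definition~\ref{QCT} for $K'$, and conclude by Theorem~\ref{ContacttransformKS}. The only place the paper is more explicit than you is in condition~(3): rather than appealing to locality of $\mu hom$ in general, it unwinds $\mu hom$ as $\mu_{\Delta}R\mathcal{H}om(p_2^{-1}(-),p_1^!(-))$ and invokes \cite[Proposition~4.3.5]{KS} to get $\mu hom(K',K')=\mu hom(K,K)|_{T^*(X'\times Y')}$, which is exactly the precise step you flagged as the main subtlety.
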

\begin{proof}
We first define two the open subsets $\Omega_{X'}, \Omega_{Y'}$ from $K'$, and show that the singular support of $K'$ defines a contact transform between the two. Let
\begin{align*} 
L &:= \dot{SS}(K) \subset \dot{T}^*(X\times Y), \\
L' &:= \dot{SS}(K') \subset  \dot{T}^*(X'\times Y').
\end{align*}
Apply Proposition \ref{SSfunctorial} to $j: = j_1\times j_2: X'\times Y' \rightarrow X\times Y$. Then
$$L' = \dot{SS}(K') =\dot{SS}(j^{-1}K) = {j_d}j_\pi^{-1}(\dot{SS}(K)) = {j_d}j_\pi^{-1}L.$$
Because $j$ is an open embedding, ${j_d}j_\pi^{-1}$ is just the restriction map. Hence we can identify $L'$ as a submanifold of $L$.
\begin{equation}\label{L'constraints}
L' = L \cap (\dot{T}^*X'\times \dot{T}^*Y') \subset (\dot{T}^*X'\cap \Omega_X^a)\times (\dot{T}^*Y'\cap \Omega_{Y}).
\end{equation}
Let $p_i, i=1,2$ be the projection from $\dot{T}^*X\times \dot{T}^*Y$ to the $i$-th component. Define 
$$\Omega_{X'} := (p_1)^a|_L(L'), \quad \Omega_{Y'} := p_2|_L(L')$$
It follows (\ref{L'constraints}) that $\Omega_{X'}\subset \Omega_X \cap T^*X'$ and $\Omega_{Y'} \subset \Omega_Y\cap T^*Y'$. Moreover, the diffeomorphisms $\Omega_X\cong L\cong \Omega_Y$ implies the diffeomorphsims $\Omega_{X'}\cong L'\cong \Omega_{Y'}$.

Next we apply Theorem \ref{ContacttransformKS} to prove that $\Phi_{K'}: D^b(X', \Omega_{X'})\rightarrow D^b(Y', \Omega_{Y'})$ is an equivalence of categories. First, a sheaf being cohomologically constructible is a local condition. Therefore $K$ is cohomologically constructible implies that $K'$ is cohomologically constructible. The second condition is evident by the construction of $\Omega_{X'}$ and $\Omega_{Y'}$. Finally we need to verify that there is an isomorphism:
$$k_{L'}\rightarrow \mu hom (K',K')|_{L'}.$$
Let $M = X\times Y$ and let $p_1,p_2$ be the projections from $M\times M$ to the first and second components. Similarly define $q_1,q_2$ for $M'\times M'$, where $M' = X'\times Y'$. Let $j\times j: M' \times M' \rightarrow M\times M$ be the open embedding induced by $j$. Note $j^! = j^{-1}$ because $j$ is open.
\begin{align*}
\mu hom (K',K') &= \mu_{\Delta_{M'}} R\cH om_{M'\times M'} (q_2^{-1}K',q_1^!K') \\
		&= \mu_{\Delta_{M'}} R\cH om_{M'\times M'} (q_2^{-1}j^{-1}K,q_1^!j^!K) \\
		&= \mu_{\Delta_{M'}} R\cH om_{M'\times M'} ((j\times j)^{-1}p_2^{-1}K,(j\times j)^{!}p_2^{!}K) \\
		&= \mu_{\Delta_{M'}} (j\times j)^!R\cH om_{M\times M} (p_2^{-1}K,p_2^{!}K)\\
		&= \mu_{\Delta_{M}} R\cH om_{M\times M} (p_2^{-1}K,p_2^{!}K)|_{T^*M'}.
\end{align*}
The last line follows from \cite[Proposition 4.3.5]{KS}, which states $\mu_{\Delta_{M'}} (j\times j)^!\cG = \mu_{\Delta_M}\cG|_{T^*M'}$ for $\cG \in D^b(M\times M)$ (bottom row of the diagram in the proposition, \textit{loc.cit.}). Because the canonical morphism $k_{L}\rightarrow \mu hom (K,K)|_L$ is an isomorphism, so is $k_{L'}\rightarrow \mu hom (K',K')|_{L'}$. We complete the proof.
\end{proof}

\begin{defn}\label{resdef}
Following the notation in Lemma \ref{restriction}, we say $\Phi_{K'}: D^b(X', \Omega_{X'})\xrightarrow{\sim} D^b(Y', \Omega_{Y'})$ is a \textit{restriction} of $\Phi_{K}: D^b(X, \Omega_{X})\xrightarrow{\sim} D^b(Y, \Omega_{Y}).$
\end{defn}

\begin{rmk}
The open subsets $\Omega_{X'}$ and $\Omega_{Y'}$ are determined by the pair of open embeddings $j_1$ and $j_2$. In general it is not known if $\Omega_{X'} = \Omega_X \cap \dot T^*X'$ or $\Omega_{Y'} = \Omega_Y \cap \dot T^*Y'$. 
\end{rmk}

\begin{prop}\label{CorRes}
Following the notation in Lemma \ref{restriction}, if $\Phi_{K'}$ is a restriction of $\Phi_K$, then
$j_2^{-1}\circ\Phi_K = \Phi_{K'}\circ j_1^{-1}$. In other words, the following diagram commutes.
\begin{center}
\begin{tikzpicture}
  \node (A){$D^b(X, \Omega_X)$};
  \node (B)[right of=A, node distance=4cm]{$D^b(Y, \Omega_Y)$};
  \node (D)[below of=A, node distance=2cm]{$D^b(X', \Omega_{X'})$};
  \node (E)[below of=B, node distance=2cm]{$D^b(Y', \Omega_{Y'})$};
  \draw[->] (A) to node [] {$\Phi_{K}$} (B);
  \draw[->] (A) to node [swap]{$j_1^{-1}$}(D);
  \draw[->] (B) to node []{$j_2^{-1}$}(E);
  \draw[->] (D) to node []{$\Phi_{K'}$}(E);
\end{tikzpicture}
\end{center}
\end{prop}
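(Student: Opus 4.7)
The plan is to reduce the claim to a routine diagram chase using proper base change and the compatibility of inverse image with tensor product, since the geometric input is particularly simple: the projections fit into a cartesian square because $j_1$ and $j_2$ are open embeddings. Specifically, the square
\begin{equation*}
\begin{array}{ccc}
X'\times Y' & \xrightarrow{\ j_1\times j_2\ } & X\times Y\\
\pi'_2\ \downarrow\ & & \ \downarrow\ \pi_2\\
Y' & \xrightarrow{\ j_2\ } & Y
\end{array}
\end{equation*}
is cartesian, and similarly for $\pi_1$ with $j_1$ replacing $j_2$ on the outside.

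Starting from the definition
$$j_2^{-1}\Phi_K(\mathcal{F})\ =\ j_2^{-1}R\pi_{2!}\bigl(K\otimes \pi_1^{-1}\mathcal{F}\bigr),$$
I would first apply proper base change for the square above (\cite[Proposition 2.6.7]{KS}) to obtain $R\pi'_{2!}(j_1\times j_2)^{-1}\bigl(K\otimes \pi_1^{-1}\mathcal{F}\bigr)$. Next I would push $(j_1\times j_2)^{-1}$ through the tensor product (\cite[Proposition 2.3.5]{KS}), which is allowed since inverse image is a tensor functor; this yields $R\pi'_{2!}\bigl((j_1\times j_2)^{-1}K\otimes (j_1\times j_2)^{-1}\pi_1^{-1}\mathcal{F}\bigr)$. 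By definition the first factor is $K'$, and by commutativity of the other cartesian square, $(j_1\times j_2)^{-1}\pi_1^{-1}\mathcal{F}={\pi'_1}^{-1}j_1^{-1}\mathcal{F}$. The resulting expression is exactly $\Phi_{K'}(j_1^{-1}\mathcal{F})$, which is what we want.

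In fact, this is a specialization of the computation in Lemma \ref{LiftTransform}: taking $f_1=j_1$ and $f_2=j_2$ to be open embeddings makes the intermediate step involving $R(f_1\times id)_!(f_1\times id)^{-1}$ collapse, since for open embeddings this composite reduces the support without altering the sheaf in the relevant range. So one could alternatively deduce the proposition as an immediate corollary of Lemma \ref{LiftTransform}.

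I do not anticipate any genuine obstacle; the only point requiring a sentence of care is checking that the square is cartesian and that base change applies, which is standard for the open embedding $j_2$. The identifications $\Omega_{X'}$ and $\Omega_{Y'}$ constructed in Lemma \ref{restriction} ensure the functors land in the stated localized categories, so the commutative diagram makes sense at the level of the localizations.
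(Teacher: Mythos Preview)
Your secondary route---specializing Lemma~\ref{LiftTransform} to $f_1=j_1$, $f_2=j_2$ and then noting that $R(j_1\times id)_!(j_1\times id)^{-1}$ may be dropped because $j_1\times id$ is an open embedding---is precisely the paper's proof, essentially verbatim.

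Your primary argument, however, has a real gap: the square
\[
\begin{array}{ccc}
X'\times Y' & \xrightarrow{\ j_1\times j_2\ } & X\times Y\\[2pt]
\pi'_2\ \big\downarrow & & \big\downarrow\ \pi_2\\[2pt]
Y' & \xrightarrow{\quad j_2\quad} & Y
\end{array}
\]
is \emph{not} cartesian. The fiber product $(X\times Y)\times_Y Y'$ is $X\times Y'$, not $X'\times Y'$; that $j_1,j_2$ happen to be open embeddings is irrelevant to this. Base change along $j_2$ therefore only gives
\[
j_2^{-1}R\pi_{2!}(-)\ \cong\ R\tilde\pi_{2!}\,(id\times j_2)^{-1}(-),\qquad \tilde\pi_2\colon X\times Y'\to Y',
\]
and you are still left with the passage from $X\times Y'$ to $X'\times Y'$. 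That residual step is exactly what Lemma~\ref{LiftTransform} packages (the right-hand square in its proof is the genuine cartesian one), together with the open-embedding observation---so once repaired, your direct argument collapses back into the paper's.

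One small point worth stating precisely in either version: for an open embedding $j$ the counit $Rj_!j^{-1}\cG\to\cG$ has cone supported on the closed complement, so it is not an isomorphism in $D^b$. The equality in the second step is meant in the localized target $D^b(Y',\Omega_{Y'})$, which is where the commutative diagram lives; your phrase ``in the relevant range'' should be read in that sense.
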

\begin{proof}
For any sheaf $\cF$ on $X$, we have
$$\Phi_{K'}\circ j_1^{-1} (\cF)=j_2^{-1} R{\pi}_{2!} R(j_1\times id)_{!}(j_1\times id)^{-1}(K\otimes \pi_{1}^{-1}\cF)  = j_2^{-1} R{\pi}_{2!} (K\otimes \pi_{1}^{-1}\cF) = j_2^{-1}\circ\Phi_K(\cF).$$
The first step follows from Lemma \ref{LiftTransform} and the second step is because $j_1\times id$ is an open embedding.
\end{proof}

Now we state the relation between the Radon transform and the spherical duality.
\begin{prop}\label{RaS+}
The Radon transform is a restriction of the negative spherical duality.
\end{prop}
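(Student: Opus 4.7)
The plan is to exhibit open embeddings $j_1:\bbR^n\to S^n$ and $j_2: S^{n-1}\times \bbR\to \check S^n$ such that the pullback $(j_1\times j_2)^{-1}K_-$ is isomorphic to the Radon kernel $k_{\{\vec{x}\cdot\hat{n}\leq r\}}$. Once this is done, Lemma \ref{restriction} together with Definition \ref{resdef} immediately identifies the Radon transform as a restriction of $\Phi_{K_-}$.

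Concretely, viewing $S^n$ and $\check S^n$ as unit spheres in $\bbR^{n+1}$, I would define
\begin{align*}
j_1:\bbR^n &\to S^n, & \vec{x} &\mapsto \frac{(\vec{x},-1)}{\sqrt{1+\|\vec{x}\|^2}},\\
j_2: S^{n-1}\times \bbR &\to \check S^n, & (\hat{n},r) &\mapsto \frac{(\hat{n},r)}{\sqrt{1+r^2}}.
\end{align*}
Both are open embeddings: $j_1$ maps diffeomorphically onto the open hemisphere where the last coordinate is negative, while $j_2$ maps onto $\check S^n$ with the two poles $\pm e_{n+1}$ removed. The central computation is
\begin{align*}
\la j_1(\vec{x}),\, j_2(\hat{n},r)\ra = \frac{\vec{x}\cdot\hat{n}-r}{\sqrt{(1+\|\vec{x}\|^2)(1+r^2)}},
\end{align*}
whose sign equals the sign of $\vec{x}\cdot\hat{n}-r$. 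Consequently $(j_1\times j_2)^{-1}\{\la x,y\ra\leq 0\}=\{\vec{x}\cdot\hat{n}\leq r\}$, so $(j_1\times j_2)^{-1}K_- \cong k_{\{\vec{x}\cdot\hat{n}\leq r\}}$, which is exactly the kernel of the Radon transform.

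To finish, I would invoke Lemma \ref{restriction}: the restricted kernel defines a quantized contact transform between $(\bbR^n,\Omega_{\bbR^n})$ and $(S^{n-1}\times\bbR,\Omega_{S^{n-1}\times\bbR})$ for the open subsets determined by its singular support. By the contact transform computation in the proof of Theorem \ref{RTn}, the induced symplectomorphism $\chi$ is a diffeomorphism $\dot T^*\bbR^n \xrightarrow{\sim} T^{*,+}(S^{n-1}\times\bbR)$, so these opens coincide with those appearing in the Radon setup, completing the identification. The most delicate point is the sign bookkeeping in $j_1$: the minus sign in $j_1(\vec{x})$ is precisely what matches the inequality $\vec{x}\cdot\hat{n}\leq r$ with $\la x,y\ra \leq 0$, thereby realizing the Radon transform as a restriction of the negative (rather than positive) spherical duality.
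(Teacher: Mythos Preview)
Your proposal is correct and follows essentially the same approach as the paper: the paper defines the identical open embeddings $j_1(x)=\frac{1}{\sqrt{x^2+1}}(x,-1)$ and $j_2(\hat n,r)=\frac{1}{\sqrt{r^2+1}}(\hat n,r)$, computes that $(j_1\times j_2)^{-1}K_-$ is supported on $\{x\cdot\hat n\leq r\}$, and concludes. Your write-up is in fact slightly more detailed than the paper's, which omits the explicit verification that $j_1,j_2$ are open embeddings and the appeal to Lemma \ref{restriction}.
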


\begin{proof}
Consider the open embeddings
\begin{align*}
j_1: \bbR^n&\rightarrow S^n,  & j _2: S^{n-1}\times \bbR &\rightarrow \check S^n,\\
x &\mapsto \frac{1}{\sqrt{x^2+1}}(x,-1), &(\hat n , r) &\mapsto \frac{1}{\sqrt{r^2+1}}(\hat n, r).
\end{align*}
Let $K_-$ be the kernel of the negative spherical duality. We compute the restriction $(j_1\times j_2)^{-1}K_-$. It is the constant sheaf supported on a subset of $\bbR^n\times (S^{n-1}\times \bbR)$:
$$\left\{\frac{1}{\sqrt{x^2+1}}(x,-1) \cdot \frac{1}{\sqrt{r^2+1}}(\hat n, r) \leq 0\right\}.$$
It simplifies to
$$\{x\cdot n \leq r\}.$$
This proves the statement of the proposition.
\end{proof}

\begin{rmk}\label{RaS-}
The Radon transform is also a restriction of the negative spherical duality, given by
$$j_1(x) = \frac{1}{\sqrt{x^2+1}}(-x,1), \quad j _2(n,r) = \frac{1}{\sqrt{r^2+1}}(\hat n, r).$$
\end{rmk}

\begin{rmk}
The restrictions defined in Proposition \ref{RaS+} and Remark \ref{RaS-} are compatible with the $\bbZ_2$ action on the sphere. Hence we can think of the Radon transform as an ``affine'' projective duality.
\end{rmk}

Here is a quick application. Let $K\subset \bbR^3\subset S^3$ be a knot or a link, and $\Lambda_K$ the associated Legendrian conormal. We have the following commutative diagram:
\begin{center}
\begin{tikzpicture}
  \node (A){$D^b_{\Lambda_K}(S^3)/Loc(S^3)$};
  \node (B)[right of=A, node distance=5cm]{$D^b_{\Lambda_{K'}}(\check S^3)/Loc(\check S^3)$};
  \node (D)[below of=A, node distance=2cm]{$D^b_{\Lambda_K}(\bbR^3)/Loc(pt)$};
  \node (E)[below of=B, node distance=2cm]{$D^b_{\Lambda_{K'}}(S^2\times \bbR)/Loc(S^2)$};
  \draw[->] (A) to node [] {$\cong$} (B);
  \draw[->] (A) to node [swap]{$j_1^{-1}$}(D);
  \draw[->] (B) to node []{$j_2^{-1}$}(E);
  \draw[->] (D) to node []{$\cong$}(E);
\end{tikzpicture}
\end{center}
The vertical arrows are given by Proposition \ref{CorRes} and Proposition \ref{RaS+}. The bottom isomorphism comes from Corollary \ref{RTapplytoknotconormal}. The top isomorphism can be similarly proven. The simpleness passes through the diagram. It is proven in \cite{Ga2} that the moduli set of simple sheaves are isomorphic for the two categories on the left column, and therefore for all four categories.

\subsection{Fourier-Tamarkin} We first recall Tamarkin's original construction in \cite{Ta}. Consider the maps:
\begin{align*}
p_{13}: \bbR^n \times \check \bbR^n \times \bbR \times \bbR &\rightarrow \bbR^n\times \bbR, &\quad(x,y,s,c)&\mapsto (x,s);\\
p_{124}: \bbR^n \times \check \bbR^n \times \bbR \times \bbR&\rightarrow \bbR^n \times \check \bbR^n \times \bbR, &\quad(x,y,s,c)&\mapsto (x,y,c);\\
\pi:  \bbR^n \times \check \bbR^n \times \bbR \times \bbR &\rightarrow \check \bbR^n \times \bbR, &\quad(x,y,s,c)&\mapsto (y,s+c).
\end{align*}
Let $({x},{y},c)$ be coordinates of $\bbR^n \times \check \bbR^n \times \bbR$, and let $\cG = k_{\{\la{x},{y}\ra +c\geq 0\}}\in D^b(\bbR^n \times \check \bbR^n \times \bbR)$. The Fourier-Tamarkin transform from $D^b(\bbR^n\times \bbR, T^{*,+}(\bbR^n\times \bbR))$ to $D^b(\check \bbR^n\times \bbR, T^{*,+}(\check \bbR^n\times \bbR))$ is given by
$$\Phi_{FT}(\cF) = R\pi_{!}(p_{13}^{-1}\cF\otimes p_{124}^{-1}\cG).$$ 
It is an equivalence of categories.

\begin{lem} Let $(x,s)$ be coordinates on $\bbR^n\times \bbR$ and $(y,t)$ be coordinates on $\check \bbR^n\times \bbR$. The Fourier-Tamarkin transform
$$\Phi_{FT}: D^b(\bbR^n\times \bbR, T^{*,+}(\bbR^n\times \bbR)) \rightarrow D^b(\check \bbR^n\times \bbR, T^{*,+}(\check \bbR^n\times \bbR)).$$
is a convolution with the kernel
$$K_{FT} = k_{\{\la x, y\ra +t-s \geq 0\}}.$$
\end{lem}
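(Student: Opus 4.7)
The plan is to realize the Tamarkin formula as an honest convolution on $(\bbR^n\times\bbR)\times(\check\bbR^n\times\bbR)$ by a single invertible change of variables. Let $\tilde p_1,\tilde p_2$ denote the two projections of $(\bbR^n\times\bbR)\times(\check\bbR^n\times\bbR)$ onto its factors, so that by definition
$$\Phi_{K_{FT}}(\cF) = R\tilde p_{2!}(K_{FT}\otimes \tilde p_1^{-1}\cF),$$
and the goal is to identify this with $\Phi_{FT}(\cF)=R\pi_!(p_{13}^{-1}\cF\otimes p_{124}^{-1}\cG)$.

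First, introduce the diffeomorphism
$$\psi: \bbR^n\times\check\bbR^n\times\bbR\times\bbR \longrightarrow (\bbR^n\times\bbR)\times(\check\bbR^n\times\bbR),\qquad (x,y,s,c)\longmapsto \bigl((x,s),(y,s+c)\bigr),$$
whose inverse is $((x,s),(y,t))\mapsto (x,y,s,t-s)$. A direct inspection of coordinates gives the factorizations $\tilde p_1\circ\psi = p_{13}$ and $\tilde p_2\circ\psi = \pi$. Moreover, the defining inequality for the kernel pulls back correctly:
$$\psi^{-1}K_{FT}=k_{\{\la x,y\ra + (s+c)-s\ge 0\}} = k_{\{\la x,y\ra + c\ge 0\}} = p_{124}^{-1}\cG.$$

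Using these identifications, rewrite $\Phi_{FT}(\cF)$ by substituting $\pi = \tilde p_2\circ\psi$ and $p_{13}=\tilde p_1\circ\psi$, and pull the $\psi^{-1}$ outside the tensor product via \cite[Prop.~2.3.5]{KS}:
$$\Phi_{FT}(\cF) = R(\tilde p_2\circ\psi)_!\bigl(\psi^{-1}\tilde p_1^{-1}\cF\otimes \psi^{-1}K_{FT}\bigr) = R\tilde p_{2!}\,R\psi_!\psi^{-1}\bigl(\tilde p_1^{-1}\cF\otimes K_{FT}\bigr).$$
Since $\psi$ is a diffeomorphism, the adjunction counit $R\psi_!\psi^{-1}\to \mathrm{id}$ is an isomorphism, which then yields $\Phi_{FT}(\cF)=R\tilde p_{2!}(K_{FT}\otimes \tilde p_1^{-1}\cF)=\Phi_{K_{FT}}(\cF)$.

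There is no real obstacle here; the whole content is the bookkeeping that the $(s,c)$-coordinates of Tamarkin and the $(s,t)$-coordinates natural for the convolution are related by the shear $t=s+c$, which preserves the inequality $\la x,y\ra+c\ge 0$ up to the substitution $c\leftrightarrow t-s$. The only mild care needed is to ensure the projection formula is applied to the proper pushforward along the diffeomorphism $\psi$, where $R\psi_!$ is an equivalence with quasi-inverse $\psi^{-1}$.
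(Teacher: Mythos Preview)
Your proof is correct. Both your argument and the paper's hinge on the same shear $t=s+c$, but the implementations differ slightly: you stay on the four-variable space and use the diffeomorphism $\psi:(x,y,s,c)\mapsto((x,s),(y,s+c))$ together with $R\psi_!\psi^{-1}\cong\mathrm{id}$, whereas the paper passes to a five-variable space $\bbR^n\times\check\bbR^n\times\bbR_s\times\bbR_t\times\bbR_c$, introduces the auxiliary kernel $\tilde K = k_{\{\la x,y\ra+c\ge 0,\; t=s+c\}}$ supported on the graph of the shear, and then pushes forward along the projection $q_{1234}$ dropping $c$, invoking properness of $q_{1234}$ on $\mathrm{supp}(\tilde K)$ and the projection formula to recover $K_{FT}$. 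Your route is a bit more direct and avoids the extra coordinate; the paper's route makes the use of the projection formula explicit but is otherwise the same computation unwound.
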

\begin{proof} We use the subscript to indicate the coordinates of different copies of $\bbR$. Consider the sheaf $\tilde{K}\in D^b(\bbR^n\times \check \bbR^n \times \bbR_s\times \bbR_t\times \bbR_c)$ defined as 
$$\tilde{K} = k_{\{\la x, y\ra + c \geq 0,\, t= s+c\}}.$$
Let $q_{\bullet}$ be the projection from $\bbR^n\times \check \bbR^n \times \bbR_s\times \bbR_t\times \bbR_c$ to the components indexed by the bullet. The Fourier Tamarkin transform is equivalent to
$$\Phi_{FT}(\cF) = Rq_{24!}\left(q_{13}^{-1}\cF\otimes \tilde{K}\right).$$

In $supp(\tilde{K})$, the value of of $c$ is uniquely determined by $s$ and $t$. Therefore $q_{1234}$ is proper on $supp(\tilde{K})$, and $Rq_{1234!}\tilde{K} = K_{FT}$.

Let $p_{\bullet}$ be the projection from $\bbR^n\times \check \bbR^n \times \bbR_s\times \bbR_t$ to the components index by the subscript. In particular there is $q_{\bullet} = p_\bullet \circ q_{1234}$. Recall a general formula $Rf_!(f^{-1}\cG \otimes \cH) = \cG \otimes Rf_! \cH$. Take $f = q_{1234}, \cG = p^{-1}_{13}\cF$, and $\cH = \tilde{K}$, we have
$$\Phi_{FT}(\cF) = Rp_{24!} (Rp_{13}^{-1}\cF \otimes Rq_{1234!}\tilde{K}) = Rp_{24!}(Rp_{13}^{-1}\cF \otimes K_{FT}).$$
We complete the proof.
\end{proof}

\begin{rmk}
The inverse Fourier-Tamarkin transform can be calculated is a similar way. It is the convolution by $$K^{-1}_{FT} = k_{\{-\la x, y\ra + s-t \geq 0\}}.$$
\end{rmk}

\begin{prop}\label{FTRa}
The Fourier-Tamarkin transform is a restriction of the Radon transform.
\end{prop}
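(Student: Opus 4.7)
The plan is to exhibit explicit open embeddings $j_1: \bbR^n\times\bbR \hookrightarrow \bbR^{n+1}$ and $j_2: \check\bbR^n\times\bbR \hookrightarrow S^n\times \bbR$ such that pulling back the Radon kernel $K_{Ra} = k_{\{\vec x'\cdot\hat n \,\leq\, r\}}$ on $\bbR^{n+1}\times(S^n\times\bbR)$ via $j_1\times j_2$ reproduces exactly the Fourier--Tamarkin kernel $K_{FT} = k_{\{\la x, y\ra + t - s \,\geq\, 0\}}$. Once this identification of kernels is in place, Lemma \ref{restriction} together with Definition \ref{resdef} give the conclusion directly.

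For $j_1$ I take the tautological identification $(x, s) \mapsto (x, s)$ of $\bbR^n\times\bbR$ with $\bbR^{n+1}$, writing the coordinates of $\bbR^{n+1}$ as (first $n$, last $1$). The nontrivial choice is $j_2$, for which I propose
$$j_2(y, t) \;=\; \left(\frac{(-y,\,1)}{\sqrt{1 + \|y\|^2}},\; \frac{t}{\sqrt{1 + \|y\|^2}}\right).$$
This map is a diffeomorphism onto the open subset $\{(\hat n, r)\in S^n\times\bbR : \hat n_{n+1} > 0\}$, its inverse being $(\hat n, r)\mapsto(-\hat n_{1,\ldots,n}/\hat n_{n+1},\, r/\hat n_{n+1})$, so it is an open embedding. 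The ansatz is motivated by the chart used in Proposition \ref{RaS+} to realize the Radon transform as a restriction of the spherical duality; the last coordinate is rescaled by the same factor $\sqrt{1+\|y\|^2}$ so that the Fourier--Tamarkin translation parameter $t$ will emerge naturally on the nose.

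The main computation is then a one-line substitution. The defining inequality $\vec x'\cdot\hat n \leq r$ for $K_{Ra}$ pulls back under $j_1\times j_2$ to
$$\frac{-\la x, y\ra + s}{\sqrt{1+\|y\|^2}} \;\leq\; \frac{t}{\sqrt{1+\|y\|^2}},$$
which, clearing the positive denominator, is equivalent to $\la x, y\ra + t - s \geq 0$. Hence $(j_1\times j_2)^{-1}K_{Ra} = K_{FT}$, and Lemma \ref{restriction} certifies $\Phi_{FT}$ as a quantized contact transform obtained by restricting $K_{Ra}$, which is precisely the statement that the Fourier--Tamarkin transform is a restriction of the Radon transform in the sense of Definition \ref{resdef}.

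There is no substantive obstacle once the embeddings are correctly guessed; the argument is a routine identification of kernels. The only delicate point is arranging $j_2$ so that the normalization of $\hat n$ and the value of $r$ cooperate to produce the sheared half-space on the Fourier--Tamarkin side. One could additionally verify that the open sets $\Omega_{X'}, \Omega_{Y'}$ furnished by Lemma \ref{restriction} land inside the $T^{*,+}$ regions on which $\Phi_{FT}$ is defined, but by construction these are determined by the restricted kernel, so no further bookkeeping is required.
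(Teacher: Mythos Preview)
Your proposal is correct and follows exactly the same strategy as the paper: exhibit explicit open embeddings $j_1,j_2$ and verify that $(j_1\times j_2)^{-1}K_{Ra}=K_{FT}$, then invoke Lemma~\ref{restriction}. The only cosmetic difference is where the signs are placed: the paper uses $j_1(x,s)=(-x,-s)$ and $j_2(y,t)=\frac{1}{\sqrt{y^2+1}}((y,-1),t)$, whereas you keep $j_1$ the identity and put the sign flip into the $\hat n$-component of $j_2$; since $\vec x'\cdot\hat n$ is unchanged under simultaneous negation of both factors, the two choices yield the same pullback computation.
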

\begin{proof}
Consider the open embeddings
\begin{align*}
j_1: \bbR^n \times\bbR &\rightarrow \bbR^{n+1}, & j_2: \check \bbR^n\times\bbR &\rightarrow S^{n}\times \bbR, \\
		(x,s) &\mapsto (-x,-s), & (y,t) &\mapsto \frac{1}{\sqrt{y^2+1}}((y,-1), t).
\end{align*}
It is straight forward to check the restriction of the Radon transform kernel is the Fourier-Tamarkin kernel.
\end{proof}

\begin{rmk}
The composition of Proposition \ref{RaS+} and Proposition \ref{FTRa} gives an explicit relation between the Fourier-Tamarkin transform and the spherical duality:
$$j_1: \bbR^n \times \bbR\rightarrow S^{n+1},\qquad (x,s)\mapsto \frac{1}{\sqrt{x^2+s^2+1}}(-x,-s,-1),$$
and 
$$j_2: \check \bbR^n \times \bbR\rightarrow \check S^{n+1},\qquad (y,t)\mapsto \frac{1}{\sqrt{y^2+1+t^2}}(y,-1,t).$$
The overwhelming minus signs are because of the different conventions in \cite{KS} and \cite{Ta}.
\end{rmk}

Finally, we put the results into the diagram in Theorem \ref{MainThm3}.

\begin{proof}[Proof of Theorem \ref{MainThm3}]
It follows from Proposition \ref{RaS+}, Proposition \ref{FTRa} and Proposition \ref{polarizationresult}.
\end{proof}

\end{document}